\setlist[enumerate]{labelsep=*, leftmargin=1.5pc}
\setlist[enumerate]{label=\normalfont(\roman*), ref=\roman*}
\numberwithin{figure}{section}
\theoremstyle{plain}
\newtheorem{thm}{Theorem}[section]
\newtheorem{pro}[thm]{Proposition}
\newtheorem{lem}[thm]{Lemma}
\newtheorem{cor}[thm]{Corollary}
\newtheorem{conjecture}{Conjecture}
\theoremstyle{definition}
\newtheorem{dfn}[thm]{Definition}
\newtheorem{rem}[thm]{Remark}
\newtheorem{eg}[thm]{Example}
\newtheorem{algorithm}[thm]{Algorithm}
\DeclareMathOperator{\Cl}{Cl}
\DeclareMathOperator{\Hom}{Hom}
\DeclareMathOperator{\Pic}{Pic}
\DeclareMathOperator{\Div}{Div}
\DeclareMathOperator{\GL}{{GL}}
\newcommand{\cA}{\mathcal{A}}
\newcommand{\cO}{\mathcal{O}}
\newcommand{\QQ}{{\mathbb{Q}}}
\newcommand{\RR}{{\mathbb{R}}}
\newcommand{\PP}{{\mathbb{P}}}
\newcommand{\ZZ}{{\mathbb{Z}}}
\renewcommand{\AA}{{\mathbb{A}}}
\newcommand{\FF}{\mathbb{F}}
\newcommand{\GG}{\mathbb{G}}
\newcommand{\cX}{\mathcal{X}}
\newcommand{\cM}{\mathcal{M}}
\newcommand{\cQ}{\mathcal{Q}}
\renewcommand{\tilde}{\widetilde}
\renewcommand{\bar}{\overline}
\newcommand{\conv}[1]{\operatorname{conv}\left({#1}\right)}
\begin{document}
\author[D.~Cavey]{Daniel Cavey}
\address{School of Mathematical Sciences\\University of Nottingham\\Nottingham, NG$7$\ $2$RD\\UK}
\email{pmxdc4@nottingham.ac.uk}
\author[T.~Prince]{Thomas Prince}
\address{Department of Mathematics\\Imperial College London\\London, SW$7$\ $2$AZ\\UK}
\email{t.prince12@imperial.ac.uk}
\title{Del~Pezzo surfaces with a single $\frac{1}{k}(1,1)$ singularity}
\maketitle
\begin{abstract}
Inspired by the recent progress by Coates--Corti--Kasprzyk et al.\ on Mirror Symmetry for del~Pezzo surfaces, we show that for any positive integer $k$ the deformation families of del~Pezzo surfaces with a single $\frac{1}{k}(1,1)$ singularity (and no other singular points) fit into a single cascade. Additionally we construct models and toric degenerations of these surfaces embedded in toric varieties in codimension $\leq 2$. Several of these directly generalise constructions of Reid--Suzuki (in the case $k=3$). We identify a root system in the Picard lattice, and in light of the work of Gross--Hacking--Keel, comment on Mirror Symmetry for each of these surfaces. Finally we classify all del~Pezzo surfaces with certain combinations of $\frac{1}{k}(1,1)$ singularities for $k = 3,5,6$ which admit a toric degeneration.
\end{abstract}

\section{Introduction}

The smooth del~Pezzo surfaces are among the most familiar, and fundamental, objects in algebraic geometry. It has been known since the end of the 19th century that -- following the terminology of Reid--Suzuki~\cite{RS03} -- these surfaces form a \emph{cascade} (see del~Pezzo~\cite{dP1887} together with Castelnuovo's contractibility critereon~\cite{C1897}). Indeed, every smooth del~Pezzo surface is obtained from $\PP^2$ by blowing up a general collection of points, with the exception of $\PP^1\times \PP^1$ which is the contraction of an exceptional curve on $\PP^2$ blown up in two distinct points.

Fixing an algebraically closed field of characteristic zero, an analogous cascade appears when one allows the del~Pezzo surface to acquire a single $\frac{1}{k}(1,1)$ singularity (see~\S\ref{sec:del_Pezzos}). For every surface in such a cascade there is an embedding of this surface into a toric variety with codimension $\leq 2$.

\begin{thm}
\label{thm:cascade}
Given an integer $k > 3$ there are precisely $k+6$ deformation classes of del~Pezzo surfaces with a single $\frac{1}{k}(1,1)$ singularity. Of these, $k+5$ families are obtained by blowing up $\PP(1,1,k)$ in general smooth points. The remaining surface is obtained by contracting an exceptional curve on $\PP(1,1,k)$ blown up in $k+1$ smooth points. Moreover there is an embedding (not always quasismooth) of these surfaces, and a toric degeneration of each of these surfaces, into a toric variety with codimension $\leq 2$.
\end{thm}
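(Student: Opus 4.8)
The plan is to construct the cascade from the top, starting from $\PP(1,1,k)$, and to control membership of a surface in it through the anticanonical degree $K^2$. First I would check that $\PP(1,1,k)$ is a del~Pezzo surface whose only singular point is the vertex $[0:0:1]$, where the local chart is $\CC^2/\mu_k$ with $\zeta\cdot(x_0,x_1)=(\zeta x_0,\zeta x_1)$, i.e.\ exactly a $\frac1k(1,1)$ singularity, and that its degree is $K^2=(k+2)^2/k=k+4+\tfrac4k$. Blowing up a smooth point disjoint from the vertex preserves the singularity type and lowers $K^2$ by $1$, so the surface $X_n$ obtained from $n$ general points satisfies $K_{X_n}^2=k+4+\tfrac4k-n$ and has the same singularity. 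Since $-K_{X_n}$ ample forces $K_{X_n}^2>0$, and $\tfrac4k\le1$ for $k>3$, this gives $n\le k+4$; I would then verify that for points in general position $-K_{X_n}$ is genuinely ample (no negative curve destroys positivity) throughout this range. As general configurations of $n$ points on $\PP(1,1,k)$ form an irreducible family, each $n\in\{0,1,\dots,k+4\}$ yields a single deformation class, producing the $k+5$ blow-up families.

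For the final surface I would imitate the way $\PP^1\times\PP^1$ sits alongside $\mathrm{Bl}_1\PP^2$ in the classical cascade. Blowing up $\PP(1,1,k)$ in $k+1$ (suitably placed) smooth points produces a surface carrying a distinguished exceptional curve $C$ whose class is not one of the blow-up divisors; contracting $C$ yields a del~Pezzo surface $Y$ with the same singularity and, by the adjunction bookkeeping, with $K_Y^2=k+4+\tfrac4k-(k+1)+1=4+\tfrac4k$. This matches the degree of $X_k$, but I expect $Y\not\cong X_k$: the two should be separated by a discrete invariant such as the configuration of their negative curves or the isometry type of the Picard lattice together with $-K$. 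Establishing that $Y$ is del~Pezzo, that its singularity is unchanged, and that it is genuinely new accounts for the $(k+6)$th family.

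The converse — that the list $X_0,\dots,X_{k+4},Y$ is exhaustive — is where I expect the real difficulty to lie. Given any del~Pezzo surface $X$ with a single $\frac1k(1,1)$ singularity, I would run a minimal model program: the point is to show that whenever $X$ is not minimal it carries a $(-1)$-curve disjoint from the singular point, whose contraction produces a smaller del~Pezzo surface of the same singularity type, so that iterating gives a morphism to a minimal model. The crux is then to prove that the only minimal models arising are $\PP(1,1,k)$ and $Y$, and to identify the blow-ups of $Y$ with blow-ups of $\PP(1,1,k)$ (mirroring $\mathrm{Bl}_1(\PP^1\times\PP^1)\cong\mathrm{Bl}_2\PP^2$), so that no new classes appear; combined with the degree bound this pins down both the count $k+6$ and the statement that all but one family is a blow-up of $\PP(1,1,k)$. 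Controlling which $(-1)$-curves avoid the singular point, bounding the length of the program, and checking that general position is attainable at each stage (so the stratification by $n$ really gives single deformation classes) is the main obstacle.

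Finally, for the models and toric degenerations I would work constructively, generalising the $k=3$ construction of Reid--Suzuki. For each family I would present a graded ring of $X$ (an anticanonical-type ring, or its Cox ring) and realise $X$ inside an ambient toric variety cut out in codimension $\le2$ — as a hypersurface, a complete intersection, or a Pfaffian/determinantal locus, according to the degree. A toric degeneration is then produced by degenerating the defining equations to their initial binomial forms via a suitable one-parameter subgroup, yielding a toric variety whose fan (equivalently, whose dual polygon) is the combinatorial object feeding the Gross--Hacking--Keel mirror construction. Verifying that such a degeneration exists within the codimension bound, and is compatible with the cascade, is the constructive part to be carried out family by family.
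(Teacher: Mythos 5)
Your classification argument follows essentially the same route as the paper. The paper's proof is a directed MMP built on Proposition~\ref{pro:extremal_contractions}, which sorts extremal contractions of a del~Pezzo surface with a single $\frac{1}{k}(1,1)$ point into three types: contraction of a \emph{floating} $(-1)$-curve (one lying in the smooth locus), contraction of a $(-1)$-curve whose strict transform on the minimal resolution meets the exceptional $(-k)$-curve $E$ once (lowering $k$ by one), and the Mori fibre case $X\cong\PP(1,1,k)$. Exhaustiveness is then exactly your ``crux'': assuming no floating $(-1)$-curves, the longest chain of type~(ii) contractions on the minimal resolution lands on a Hirzebruch surface $\FF_l$; if $l>0$ the last blow-up was at a point of the negative section, which would have created a floating $(-1)$-curve, so $l=0$ and one identifies $X\cong B^{(k)}_k$. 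Your numerics agree (degree bound $l<(k+2)^2/k$ giving $l\leq k+4$ for $k>3$; $B^{(k)}_k$ and $X^{(k)}_k$ have equal degree $4+\frac{4}{k}$ and are separated by a discrete invariant --- the paper uses Fano index $2$ versus $1$). One simplification you could use: contracting a $(-1)$-curve on a del~Pezzo surface automatically yields a del~Pezzo surface (the degree increases by one and intersection numbers of $-K$ with curves do not decrease, so Nakai--Moishezon applies), so the direction genuinely needing care is the one you flag, namely ampleness after blowing up points in general position, and the paper's remark (after Proposition~\ref{pro:extremal_contractions}) that type~(ii) contractions introduce no new floating $(-1)$-curves.

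The genuine gap is in your constructive part: Reid--Suzuki-style graded rings in weighted projective space cannot deliver the ``codimension $\leq 2$'' claim for all families. The paper's own Hilbert series table in \S\ref{sec:Hilbert_Series} shows that for odd $k=2m-1$ the weighted-projective models of $X^{(k+3)}_k$ and $X^{(k+2)}_k$ occur in codimension three (Pfaffians of a $5\times 5$ matrix in $\PP(1,1,1,m,m,k)$) and codimension four respectively --- indeed your own mention of Pfaffian loci already concedes codimension three, which violates the bound in the statement. The missing idea is to abandon weighted projective ambients and instead run Laurent inversion from a scaffolding of the Fano polygon of the candidate toric degeneration: this produces $X^{(l)}_k$ for $l<k+2$ (and $B^{(k)}_k$) as hypersurfaces in the scroll $\PP_{\PP(1,1,k)}(\cO\oplus\cO(k-l))$ (resp.\ in $\PP(1,1,1,k)$), and, for odd $k$, realises $X^{(k+2)}_k$ and $X^{(k+3)}_k$ as codimension-two complete intersections in toric fourfolds generalising $dP_6\subset\PP^2\times\PP^2$ (see \S\ref{sec:low_codim_models}). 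A second consequence of reversing the direction of construction, as the paper does, is that the toric degeneration comes for free: the toric surface $X_P$ is itself cut out by binomial equations in the very same linear systems, so the family of general sections visibly contains it as a flat limit; in your direction one would additionally have to verify that the initial-ideal degeneration is flat with normal toric special fibre, inside an ambient space which, by the codimension obstruction above, you have not yet found.
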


The definitions of these families apply to any non-negative integer $k$ (by convention $k=1$ denotes the smooth case). These families account for all but one family of del~Pezzo surfaces with a single $\frac{1}{k}(1,1)$ singularity in the case $k=3$ or $k=2$, and all but three families in the case $k=1$. An example of one of the cascades is given in~\S\ref{sec:cascadesofsurfaces} for $k=5$.

It is well known that each of the ten smooth del~Pezzo surfaces is related to a certain \emph{root system}, whose roots are $-2$ classes in the orthogonal of the canonical class in the Picard group of the del~Pezzo surface. The Weyl group of this root system acts on the collection of $(-1)$-curves of the del~Pezzo surface. The list of root systems $R$ associated to the smooth del~Pezzo surfaces, listed by their degree $d$, was described by Manin~\cite{Manin86}.
\[
\begin{array}{c||c|c|c|c|c|c|c}
9-d& 2 & 3 & 4 & 5 & 6 & 7 & 8  \\
\hline
R & A_1 & A_2 \times A_1 & A_4 & D_5 & E_6 & E_7 & E_8
\end{array}
\]  
We prove the following analogous statement for the cascade of surfaces obtained from $\PP(1,1,k)$.
\begin{thm}
\label{thm:root_systems}
Let $X$ be a del~Pezzo surface obtained as the blow-up of $\PP(1,1,k)$ in $2 \leq l \leq k+4$ general smooth points. The collection of $-2$ classes in $\omega^\bot \subset \Pic(X)$ is a root system given by:
\[
\begin{array}{c||c|c|c|c|c|c}
l = (k+1)^2/k - d & 2 & \ldots & k+1 & k+2 & k+3 & k+4  \\
\hline
R & A_1 & \ldots & A_k & A_{k+1} \times A_1 & A_{k+3} & D_{k+4}
\end{array}
\]
In the case $k=3$ the blow-up of $\PP(1,1,k)$ in $l=k+5$ points is also a del~Pezzo surface, and contains an $E_8$ root system generating the Picard lattice of this surface.
\end{thm}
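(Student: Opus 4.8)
The plan is to reduce the statement to a computation inside the Picard lattice and an identification of Dynkin diagrams. First I would record the lattice structure. Since the $l$ points are general and lie away from the singular point, blowing them up leaves the $\frac1k(1,1)$ singularity untouched, so $\Pic(X) = \ZZ L \oplus \bigoplus_{i=1}^{l} \ZZ E_i$, where $L = \pi^{*}\cO_{\PP(1,1,k)}(k)$ is the Cartier generator pulled back from the base and $E_1,\dots,E_l$ are the exceptional classes. The form is diagonal with $L^2 = \cO(k)^2 = k$ and $E_i^2 = -1$, and the canonical class is $K_X = -\tfrac{k+2}{k}L + \sum_i E_i$, which for $k>2$ is only $\QQ$-Cartier; nonetheless $\omega := K_X$ lies in $\Pic(X)\otimes\QQ$ with $K_X^2 = \tfrac{(k+2)^2}{k} - l$. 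As $X$ is del Pezzo (Theorem~\ref{thm:cascade}) we have $K_X^2>0$, so the form has signature $(1,l)$ and $\omega^{\perp}$ is negative definite of rank $l$; hence the set of $(-2)$-classes in $\omega^{\perp}$ is finite and is stable under the reflections $s_\alpha(x)=x+(x\cdot\alpha)\alpha$, which preserve the lattice and fix $\omega$. With respect to the positive-definite form $-(\,\cdot\,)$ this is exactly a finite reduced root system, and it remains to identify it.

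Next I would solve the two defining equations. Writing $D = aL + \sum_i b_i E_i$, the conditions $D\cdot\omega=0$ and $D^2=-2$ read $\sum_i b_i = -a(k+2)$ and $a^2k - \sum_i b_i^2 = -2$. The decisive step—and the one I expect to be the main obstacle, since it is where $l\le k+4$ must enter—is to show these force $a\in\{-1,0,1\}$. Completing the square and substituting both relations gives the identity $\sum_i (b_i+a)^2 = a^2(l-k-4)+2$, whose left side is non-negative. For $l\le k+3$ this yields $a^2(k+4-l)\le 2$, hence $a^2\le 2$; for $l=k+4$ it reads $\sum_i(b_i+a)^2=2$, so all but two of the $b_i$ equal $-a$ and the two exceptions differ from $-a$ by $\pm1$, giving $\sum_i b_i = -a(k+4)+\delta$ with $|\delta|\le 2$; comparison with $\sum_i b_i=-a(k+2)$ forces $\delta=2a$, hence $|a|\le 1$ again.

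With $a$ bounded I would enumerate. For $a=0$ the equations give $\sum b_i=0$, $\sum b_i^2=2$, i.e. the classes $E_i-E_j$, a copy of $A_{l-1}$. For $a=\pm1$ the same completing-the-square trick applied to $b_i+1$ forces each $b_i\in\{0,-1\}$ with exactly $k+2$ equal to $-1$, so the extra roots are $\pm\bigl(L-\sum_{i\in S}E_i\bigr)$ with $|S|=k+2$; these occur precisely when $l\ge k+2$. Thus for $2\le l\le k+1$ only the $A_{l-1}$ roots appear; for $l=k+2$ there is a single extra orthogonal pair, giving $A_{k+1}\times A_1$; and for $l=k+3,\,k+4$ I would pin down the type by exhibiting simple roots rather than relying on counts. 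Taking $E_1-E_2,\dots,E_{l-1}-E_l$ together with $\beta=L-E_1-\cdots-E_{k+2}$, a direct intersection computation shows $\beta$ is orthogonal to every $E_i-E_{i+1}$ except $E_{k+2}-E_{k+3}$, to which it is joined by one edge. For $l=k+3$ this attaches $\beta$ to the end of the $A_{k+2}$ chain, giving the linear $A_{k+3}$ diagram; for $l=k+4$ it attaches at the $(k+2)$-th node, producing the trivalent $D_{k+4}$ diagram. Checking that these $l$ roots are linearly independent (hence a basis of $\omega^{\perp}$) with the asserted Cartan matrix identifies $R$ in each case.

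Finally, for $k=3,\,l=8$ I would note that $K_X^2=\tfrac13>0$ keeps $X$ del Pezzo and $\omega^{\perp}$ negative definite of rank $8$, but now $l=k+5$, so the bound above permits $a\in\{0,\pm1,\pm2,\pm3\}$. Repeating the enumeration gives $56$ roots from $a=0$, $112$ from $a=\pm1$, $56$ from $a=\pm2$ (two entries $-2$, six entries $-1$) and $16$ from $a=\pm3$ (seven entries $-2$, one entry $-1$), totalling $240$. Exhibiting the simple roots $E_1-E_2,\dots,E_7-E_8$ and $L-E_1-\cdots-E_5$, the latter meets only $E_5-E_6$ and so creates a trivalent node with legs of $4,2,1$ edges, i.e. the $E_8$ diagram. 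Since the sublattice they span is even and unimodular of rank $8$, it is of finite index in $\omega^{\perp}$ and unimodularity forces that index to be $1$; hence the $(-2)$-classes form an $E_8$ root system generating $\omega^{\perp}$, as claimed.
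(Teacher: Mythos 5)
Your proof is correct, and it takes a genuinely different route from the paper's at the crucial ``no extra roots'' step. The paper first exhibits the same roots you find ($\pm(\ell_i-\ell_j)$ and $\pm(\ell_0-\ell_1-\cdots-\ell_{k+2})$, Proposition~\ref{pro:big_subsystem}) but only as a \emph{subsystem}; completeness is then obtained indirectly, by computing the index of connectedness $|P(R)/Q(R)|$ of the full root system $R^l_k$ (Proposition~\ref{pro:connectedness}: $2(k+1)$, $k+4$, $4$, and $1$ in the four cases) and consulting Bourbaki's tables, using that all roots are simply-laced, to pin down the type and hence rule out additional $(-2)$-classes. You instead prove completeness directly: the identity $\sum_i(b_i+a)^2=a^2(l-k-4)+2$ bounds $a$ and reduces everything to a finite Diophantine enumeration, after which the simple roots and Cartan matrices identify the type exactly as the paper does for its subsystem. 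Your argument is more elementary and self-contained (no appeal to the classification tables or to $P(R)/Q(R)$), at the cost of case-by-case arithmetic; the paper's lattice-theoretic argument scales more gracefully and yields the connectedness indices as a by-product. Two small points to tighten. First, for $k=3$, $l=8$ your identity gives $\sum_i(b_i+a)^2=a^2+2$, which by itself does \emph{not} bound $a$; you need one extra line, e.g.\ Cauchy--Schwarz applied to $\sum_i(b_i+a)=3a$ gives $9a^2\leq 8(a^2+2)$, so $|a|\leq 4$, and $|a|=4$ is excluded since eight integers with sum $12$ have square-sum at least $20>18$. Second, exhibiting $l$ linearly independent roots with the right Cartan matrix identifies only a full-rank subsystem in general (e.g.\ $D_8\subset E_8$); since you have the complete list of roots, you should either match cardinalities ($(k+3)(k+4)$ and $2(k+4)(k+3)$, agreeing with the paper's counts) or note that every enumerated root is a sign-coherent integer combination of your simple roots -- both are routine given your enumeration, but one of them should be said.
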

Note that these are all the interesting cases: the case $k=1$ is classical, and while if $k=2$ there is an additional surface given by the blow-up of $\PP(1,1,2)$ in $l=k+5$ general points the resolution of this surface is a weak smooth del~Pezzo surface, which are also well understood.

In ~\cite{CK17} a classification of all toric del~Pezzo surfaces with a $\QQ$-Gorenstein deformation to a del~Pezzo surface with only combinations of $\frac{1}{3}(1,1)$,~$\frac{1}{5}(1,1)$, and~$\frac{1}{6}(1,1)$ singularities, as listed in Theorem~\ref{thm:small_classification}, is given. Theorem~\ref{thm:cascade} tells us that there are no additional del~Pezzo surfaces; that is, all such del~Pezzo surfaces admit a toric degeneration to one of the toric varieties in~\cite{CK17}. These toric degenerations are also embedded in toric varieties with codimension $\leq 2$.

\begin{thm}[\cite{CK17}]
\label{thm:small_classification}
There are precisely twenty-six $\QQ$-Gorenstein deformation classes of surfaces with basket of singularities of the form
\[
\left\{m_1 \times \frac{1}{3}(1,1), m_2 \times \frac{1}{5}(1,1), m_3 \times \frac{1}{6}(1,1)\right\}
\]
for which either
\begin{align*}
m_1=0,m_2 > 0,m_3=0 \quad \text{or} \quad m_1 \geq 0,m_2=0,m_3 >0.
\end{align*}
All of these admit a $\QQ$-Gorenstein toric degeneration. There are precisely fourteen such families in the first case, and twelve in the second.
\end{thm}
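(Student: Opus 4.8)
The plan is to translate this classification into the combinatorial language of Fano polygons and their mutations, following the mirror symmetry programme of Coates--Corti--Kasprzyk et al. Recall that $\QQ$-Gorenstein toric degenerations of a del~Pezzo surface $X$ correspond to toric del~Pezzo surfaces $X_P$ encoded by Fano polygons $P \subset N_\QQ$, and that the operation of $\QQ$-Gorenstein deformation is mirrored combinatorially by \emph{mutation} of $P$. Thus the deformation families of interest are in bijection with mutation-equivalence classes of Fano polygons, and the requirement that each family admit a toric degeneration is automatic from this correspondence. The crucial input is that the residual \emph{basket of singularities} is precisely the residual part of the \emph{singularity content} of $P$, which by Akhtar--Kasprzyk is invariant under mutation, and so is constant on each mutation class.

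With this dictionary in place the problem becomes purely combinatorial: enumerate the mutation-equivalence classes of Fano polygons whose residual singularity content is $\{m_1 \times \tfrac{1}{3}(1,1), m_2 \times \tfrac{1}{5}(1,1), m_3 \times \tfrac{1}{6}(1,1)\}$ subject to the stated constraints. I would first bound the multiplicities $m_1, m_2, m_3$. The del~Pezzo condition forces $(-K_X)^2 > 0$, and for a toric representative this degree is computed from the normalised lattice area of $P$; since each residual $\tfrac{1}{k}(1,1)$ singularity contributes a fixed positive deficit to the area compatible with $P$ remaining Fano, only finitely many baskets are admissible. This yields a finite list of candidate triples $(m_1, m_2, m_3)$.

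For each admissible basket I would then produce all Fano polygons realising it and sort them into mutation classes. Concretely, one fixes a minimal-area (or otherwise canonical) representative in each class, using that every mutation class of a Fano polygon with a prescribed singularity content contains such a representative confined to a bounded region of $N_\QQ$, up to the action of $\GL_2(\ZZ)$. Running this search over the finite set of candidate baskets, discarding coincidences under $\GL_2(\ZZ)$, and grouping the survivors by mutation-equivalence yields the complete list. The count then splits as $14$ families in the case $m_1 = 0,\, m_2 > 0,\, m_3 = 0$ and $12$ families in the case $m_1 \geq 0,\, m_2 = 0,\, m_3 > 0$, for a total of $26$.

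The main obstacle is the final step: verifying that the enumeration is \emph{complete} and that polygons are correctly partitioned into mutation classes. The singularity content separates polygons lying in different baskets, but within a fixed basket deciding mutation-equivalence is delicate, since mutation classes are typically infinite and there is no purely local criterion for two polygons to be mutation-equivalent. This is handled by a rigorous, computer-assisted search: one must prove an explicit a~priori bound confining a representative of each class to a finite search region, enumerate all Fano polygons therein, and then connect mutation-equivalent polygons by exhibiting explicit chains of mutations. Establishing the bound and certifying that no additional class has been overlooked is where the substance of the argument lies.
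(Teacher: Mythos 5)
Your proposal founders on its first step: you treat the correspondence between mutation-equivalence classes of Fano polygons and $\QQ$-Gorenstein deformation families as a known dictionary, and declare that ``the requirement that each family admit a toric degeneration is automatic from this correspondence.'' That correspondence is precisely Conjecture~A of~\cite{A+} --- a conjecture, not a theorem --- and verifying it for exactly these baskets is a substantive part of the content here (the paper proves it for these baskets as a separate proposition in~\S\ref{sec:conjecture_A}). Worse, the ``automatic'' claim is false as a general principle: there exist del~Pezzo surfaces with residual basket $\{1\times\frac{1}{3}(1,1)\}$ admitting \emph{no} toric degeneration (Corti--Heuberger~\cite{CH16}), and Remark~\ref{rem:pairs_of_singularities} records four further such examples. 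So the assertion that every family with the stated baskets admits a toric degeneration must be proved, and the paper does this for the single-singularity baskets via the directed MMP --- the classification of extremal contractions (Proposition~\ref{pro:extremal_contractions}) and the cascade structure of Theorem~\ref{thm:cascade} --- an argument entirely absent from your outline.

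Even granting your combinatorial enumeration (which would essentially reproduce the cited classification of polygons in~\cite{CK17}, and whose bounding/search strategy is plausible), counting mutation classes does not by itself count deformation families. Two geometric steps are missing. First, \emph{existence}: for each polygon $P$ one must show the toric surface $X_P$ genuinely admits a $\QQ$-Gorenstein deformation smoothing its $T$-singularities while preserving the residual $R$-singularities; the paper does this by exhibiting explicit quasismooth models via Laurent inversion, e.g.\ general sections of $\cO(k_1+k_2)$ on $\PP(1,1,k_1,k_2)$, with $X_9\subset\PP(1,1,3,6)$ and $X_{10}\subset\PP(1,1,5,5)$ singled out as exactly the cases needed to complete the proof of Theorem~\ref{thm:small_classification}. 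Second, \emph{distinctness}: a priori two mutation-inequivalent polygons could be degenerations of the same family. Singularity content (a mutation invariant, giving the Euler characteristic of the smooth locus of a general fibre) separates all the relevant classes except those underlying $X^{(k)}_k$ and $B^{(k)}_k$, which the paper distinguishes by the fundamental group of the complement of a general anti-canonical divisor (trivial versus $\ZZ/2$). Without existence, distinctness, and the MMP completeness argument, your combinatorial count cannot yield the theorem.
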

\begin{rem}
Corti--Heuberger~\cite{CH16} identify three surfaces with a $\frac{1}{3}(1,1)$ singularity which do not admit a toric degeneration. Four log del~Pezzo surfaces whose blow-up in a smooth point does not admit a toric degeneration appear in Remark~\ref{rem:pairs_of_singularities}.
\end{rem}

Examples of these cascades of surfaces -- particularly the cascade obtained by blowing up $\PP(1,1,3)$ -- were considered by Reid--Suzuki~\cite{RS03}, where they construct equations for anti-canonically embedded del~Pezzo surfaces from a candidate Hilbert series. While this method makes contact with our approach at a number of points, our methods are essentially different: rather than a Hilbert series we start with a candidate toric variety (to which the desired surface degenerates) and then construct embeddings into (possibly quite general) toric varieties.

\emph{Laurent inversion}~\cite{CKP15} is used to construct models of del~Pezzo surfaces; this construction is briefly recalled in~\S\ref{sec:laurent_inversion}. The surfaces constructed provide examples for a number of results and conjectures in Mirror Symmetry, which are collected in~\S\ref{sec:mirror_symmetry}. One immediate consequence is to note~\cite[Conjecture~A]{A+} holds in the case of del~Pezzo surfaces with  the combinations of singularities appearing in Theorem~\ref{thm:small_classification} (see~\S\ref{sec:conjecture_A}). Furthermore, mirror models for the surfaces are given using the work of Gross--Hacking--Keel~\cite{GHK2,GHK1} and Gross--Hacking--Keel--Kontsevich~\cite{GHKK}; this uses the language of cluster algebras, and allows us to describe the complement of the anti-canonical divisor and its mirror-dual via certain quivers.

\section{Preliminaries on Surfaces}

\subsection{Del~Pezzo surfaces with cyclic quotient singularites}
\label{sec:del_Pezzos}

Let $\mu_k$ be the group generated by a primitive $k$-th root of unity. The notation $\frac{1}{k}(a,b)$ denotes the singularity obtained as the quotient of $\AA^2$ by the group $\mu_k$ acting with weights $(a,b)$. The singularity $\frac{1}{k}(1,1)$ is du~Val in the cases $k=1$,~$2$, which are smooth and ordinary double points respectively. These are the only two cases for which the singularity $\frac{1}{k}(1,1)$ is canonical.

\begin{dfn}
\label{dfn:T_and_R}
Given an arbitrary quotient singularity $\sigma = \frac{1}{R}(a,b)$, set $k = \text{gcd}(a+b,R)$, $c = (a+b)/k$ and $r=R/k$. Then $\sigma$ can be written in the form $\frac{1}{kr}(1,kc-1)$ and:
\begin{enumerate}
\item $\sigma$ is a \emph{T-singularity}~\cite{KSB88} if $r \mid k$;
\item $\sigma$ is an \emph{R-singularity}~\cite{AK14} if $k<r$.
\end{enumerate}
\end{dfn}

Definition~\ref{dfn:T_and_R} is motivated by the work of Wahl~\cite{W80} and Koll\'{a}r--Shepherd-Barron~\cite{KSB88} on the deformations of singularities. Discussion of these definitions from a toric viewpoint can be found in Akhtar--Kasprzyk~\cite{AK14}. A cyclic quotient singularity is a $T$-singularity if and only if it admits a $\QQ$-Gorenstein smoothing. Alternatively an $R$-singularity is rigid under any $\QQ$-Gorenstein deformation. 

\begin{eg}
The singularities $\frac{1}{k}(1,1)$ are $R$-singularities precisely when $k=3$ or $k\geq 5$. The singularities $\frac{1}{2}(1,1)$ and $\frac{1}{4}(1,1)$ are $T$-singularities.
\end{eg}

\noindent An algebraic surface is $\QQ$-Gorenstein if it is normal and the canonical divisor class is $\QQ$-Cartier.

\begin{dfn}
Let $X$ be a $\QQ$-Gorenstein algebraic surface; $X$ is a \emph{del~Pezzo surface} if the anti-canonical divisor class $-K_X$ is ample. Every del~Pezzo surface in this article will have quotient singularities of the form $\frac{1}{k}(a,b)$ for some integers $a$,~$b$,~$k$. The \emph{Fano index} of a del~Pezzo surface $X$ is the largest positive integer $f$ such that $K_X = f\cdot D$ for some $D \in \Cl(X)$.
\end{dfn}

Given a del~Pezzo surface $X$ with singularities of the form $\frac{1}{k}(1,1)$ the minimal resolution $\widehat{X} \rightarrow X$ contracts a unique curve $E$ (with $E^2 = -k$) for each $\frac{1}{k}(1,1)$ singularity. The anti-canonical class of $\widehat{X}$ is always big, but is only nef if all the singularities of $X$ are ordinary double points.

\begin{dfn}
A toric degeneration will refer to a flat and proper morphism $\pi\colon \cX \rightarrow S$ of normal schemes for which $S$ has a distinguished point $0 \in S$ such that the fibre $X_0$ is a normal toric variety. A toric degeneration $\cX \rightarrow S$ is \emph{$\QQ$-Gorenstein} if the relative anti-canonical divisor class $-K_{\cX/S}$ is $\QQ$-Cartier and relatively ample.
\end{dfn}

\subsection{Quasismooth surfaces}
\label{sec:quasismooth}

Following Iano-Fletcher~\cite{IF00}, let us recall the notion of a quasismooth complete intersection in weighted projective space $w\PP=\PP(a_{0},\ldots,a_{n})$. Let $X\subset w\PP$ be a closed subvariety, and let $\rho\colon \AA^{n+1} \backslash \{ 0 \} \rightarrow w\PP$ denote the canonical projection. The \emph{punctured affine cone} is given by $C^{\circ}_{X} = \rho^{-1}(X)$, and the \emph{affine cone} $C_{X}$ over $X$ is the completion of $C_{X}^{\circ}$ in $\AA^{n+1}$. Note that the usual action of the group $K^*$ on $w\PP$ can be restricted to $C^{\circ}_{X}$, and $X = C^{\circ}_{X} / K^*$ (here $K$ denotes our fixed algebraically closed field of characteristic zero). $X \subset w\PP$ is \emph{quasismooth of dimension $m$} if its affine cone $C_{X}$ is smooth of dimension $m+1$ outside its vertex $\underline{0}$. When $X \subset w\PP$ is quasismooth the singularities of $X$ are due to the $K^*$-action and hence are cyclic quotient singularities.
\medskip

\begin{thm}[{\cite[Theorem~8.1]{IF00}}]
\label{thm:quasismooth_codim_1}
The general hypersurface $X_{d}\subset\PP(a_{0},\ldots,a_{n})$, where $n \geq 1$, is quasismooth if and only if one of the following holds:
\begin{enumerate}
\item there exists a coordinate $x_{i}$ of $\PP(a_{0},\ldots,a_{n})$ for some $i$ of weight $d$; or
\item for every non-empty subset $I = \{ i_{0}, \ldots, i_{k-1} \} \subset \{ 0,\ldots,n \}$ either:
\begin{enumerate}
\item[(a)] there exists a monomial $x_{i_{0}}^{m_{0}}\cdots x_{i_{k-1}}^{m_{k-1}}$ of degree $d$; or
\item[(b)] for $\mu = 1,\ldots,k$ there exist monomials $x_{i_{0}}^{m_{0,\mu}}\cdots x_{i_{k-1}}^{m_{k-1,\mu}}x_{e_{\mu}}$ of degree $d$, where each of the $e_{\mu}$ are distinct.
\end{enumerate}
\end{enumerate} 
\end{thm}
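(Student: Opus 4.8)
The statement is quoted from Iano--Fletcher, so the plan is to reconstruct its proof from Bertini's theorem together with a stratified analysis of the base locus. Throughout I work in the punctured affine cone $\AA^{n+1}\setminus\{\underline 0\}$, write $f$ for a general element of the finite-dimensional space $S_d$ of degree-$d$ forms, and set $C_X=\{f=0\}$; quasismoothness asks that $C_X$ be smooth away from $\underline 0$. The first move is to stratify by coordinate support: for nonempty $I\subseteq\{0,\dots,n\}$ put $U_I=\{p : p_i\neq 0\iff i\in I\}\cong(\Gm)^{|I|}$. A point $p\in U_I$ lies in the base locus $\Bs|S_d|$ exactly when no degree-$d$ monomial is supported on $I$, that is, exactly when alternative (a) fails for $I$; hence $\Bs|S_d|=\bigcup_{I:\,(\mathrm a)\text{ fails}}U_I$.

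By Bertini's theorem (in characteristic zero) the general $C_X$ is smooth away from $\Bs|S_d|$, so on every stratum $U_I$ for which (a) holds there is nothing to prove. The content is therefore concentrated on the strata $U_I\subseteq\Bs|S_d|$ where (a) fails, and on showing that alternative (b) is precisely what keeps $C_X$ smooth there. Fix such an $I$ with $k=|I|$. Since $f$ vanishes identically on the coordinate subspace $\overline{U_I}$, for $p\in U_I$ the partials $\partial f/\partial x_i(p)$ with $i\in I$ vanish automatically, while for $j\notin I$ the expression $\partial f/\partial x_j(p)$ reduces to a sum over monomials of the shape $x^{m'}x_j$ with $\operatorname{supp}(m')\subseteq I$. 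Thus $p\in U_I$ is a singular point of $C_X$ precisely when all of these ``edge'' partials vanish at $p$.

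The key step is to show that (b) forces the common vanishing locus of the edge partials to be empty on $U_I$ for general $f$. Condition (b) supplies $k$ distinct variables $e_1,\dots,e_k\notin I$ and monomials $x^{m'_\mu}x_{e_\mu}$ of degree $d$; restricting to $U_I\cong(\Gm)^k$, the partials $G_\mu:=\partial f/\partial x_{e_\mu}\big|_{U_I}$ become Laurent polynomials in the torus coordinates, each weighted-homogeneous of degree $d-a_{e_\mu}$ for the weights $(a_i)_{i\in I}$, with independent general coefficients (the $e_\mu$ being distinct). I would conclude in either of two equivalent ways: (i) $\Newt(G_\mu)$ lies in the hyperplane $\{\sum_{i\in I}a_ix_i=d-a_{e_\mu}\}$, so the $k$ Newton polytopes lie in parallel hyperplanes, their mixed volume vanishes, and the Bernstein--Kushnirenko theorem gives no common torus solutions for general coefficients; or, more elementarily, (ii) each $G_\mu$ is an eigenfunction for the one-parameter subgroup $\lambda\mapsto(\lambda^{a_i})_{i\in I}$, so its zero locus descends to the $(k-1)$-dimensional quotient $U_I/\Gm$, an open subset of $\PP(a_i:i\in I)$ on which it is a member of a base-point-free system (the monomial $x^{m'_\mu}$ is a nowhere-vanishing section). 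As the $G_\mu$ are independently general, iterating Bertini shows that $k$ such divisors on a $(k-1)$-dimensional variety meet in the empty set. Either way $C_X$ is smooth along $U_I$; intersecting the finitely many dense-open conditions (one per stratum) produces a single general $f$ that is quasismooth. Alternative (1) is immediate: a coordinate of weight $d$ lets one eliminate that variable and exhibit $C_X$ as a graph, hence smooth.

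For the converse I would argue by a dimension count that failure of both (a) and (b) for some $I$ forces a singularity. If (a) fails then $U_I\subseteq\Bs|S_d|$, and if (b) fails there are fewer than $k$ usable edge directions, so the singular locus on $U_I$ is cut out by at most $k-1$ weighted-homogeneous equations; descending to the $(k-1)$-dimensional quotient, the projective dimension theorem keeps this intersection nonempty, giving singularities for every $f$. I expect the genuine obstacle to be the sufficiency argument on the base-locus strata --- exactly the point where ordinary Bertini gives nothing and one must exploit the parallel-simplex (equivalently, weighted-homogeneous) structure behind condition (b); the bookkeeping needed to guarantee that the singular point in the converse lies in the open torus $U_I$ rather than escaping to its boundary is the most delicate part of the necessity direction.
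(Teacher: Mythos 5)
The paper does not prove this statement --- it is quoted verbatim from Iano--Fletcher \cite[Theorem~8.1]{IF00} --- so your reconstruction can only be measured against the original argument, whose stratified-Bertini structure you do parallel. Your sufficiency direction is sound: the identification $\Bs|S_d| = \bigcup_{I:\,(\mathrm a)\text{ fails}} U_I$, Bertini off the base locus, and the observation that on a bad stratum the $k$ edge partials $G_\mu$ supplied by (b) are weighted-homogeneous of degrees $d-a_{e_\mu}$ with independent general coefficients, so that either the parallel-Newton-polytope/Bernstein argument or the descent to the $(k-1)$-dimensional quotient followed by iterated Bertini shows their common zero locus on $U_I$ is empty, is a correct and complete account of the hard half.

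Your necessity direction, however, has a genuine gap: you never invoke the failure of condition (1), and without it the claim that ``failure of both (a) and (b) for some $I$ forces a singularity'' is simply false. Take weights $(3,3,7)$ and $d=7$: for $I=\{0,1\}$ condition (a) fails (no solution to $3a+3b=7$) and (b) fails (the only monomial of the form $x^{m'}x_e$ is $z$ itself, one usable direction where $k=2$), yet the general hypersurface is $\{z=0\}$, which is quasismooth --- it is exactly the linear cone of case (1). The failure mode in your dimension count is that when $a_{e}=d$ the edge partial $G_{e}$ contains the constant contributed by the monomial $x_{e}$, i.e.\ it is a weighted form of degree $0$, hence a unit on the stratum: the singular equations then have empty zero locus, and a degree-zero semi-invariant defines no divisor on the quotient, so ``the projective dimension theorem'' cannot be applied. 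You must assume (1) fails so that every $G_j$, $j \in E$, has strictly positive weighted degree. Once you do, the ``delicate bookkeeping'' you flag at the end evaporates: since (a) fails, $f$ and all partials $\partial f/\partial x_i$ with $i \in I$ vanish identically on the \emph{closed} coordinate subspace $\Pi = \overline{U_I} \cong \AA^{k}$, and $\partial f/\partial x_j|_{\Pi} \equiv 0$ for $j \notin I \cup E$; the at most $k-1$ positive-degree forms $G_j$, $j \in E$, therefore cut out a cone in $\Pi$ of dimension $\geq 1$ by Krull's height theorem, and \emph{any} nonzero point of that cone --- in whichever stratum $U_J$, $J \subseteq I$, it happens to lie --- is a singular point of $C_X$, since the singularity criterion just verified holds on all of $\Pi$ and not merely on the open torus. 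So no escape-to-the-boundary issue arises; the genuinely missing ingredient is the use of $\neg(1)$, not the bookkeeping.
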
 

\begin{thm}[{\cite[Theorem~8.7]{IF00}}]
\label{thm:quasismooth_codim_2}
Consider a codimension two weighted complete intersection $X_{d_{1},d_{2}}\subset\PP(a_{0},\ldots,a_{n})$, where $n \geq 2$, which is not the intersection of a linear cone with another hypersurface. $X_{d_{1},d_{2}}$ is quasismooth if and only if for each non-empty subset $I= \{ i_{0},\ldots, i_{k-1} \} \subset \{ 0,\ldots,n \}$ one of the following holds:
\begin{enumerate}
\item there exist monomials $x_{i_{0}}^{m_{1,0}}\cdots x_{i_{k-1}}^{m_{1,k-1}}$ and $x_{i_{0}}^{m_{2,0}}\cdots x_{i_{k-1}}^{m_{2,k-1}}$ of degree $d_1$ and $d_2$, respectively;
\item there exists a monomial $x_{i_{0}}^{m_{1,0}}\cdots x_{i_{k-1}}^{m_{1,k-1}}$ of degree $d_{1}$ and for $\mu = 1,\ldots,k-1$ there exist monomials $x_{i_{0}}^{m_{2,0}}\cdots x_{i_{k-1}}^{m_{2,k-1}} x_{e_{\mu}}$ of degree $d_{2}$ where the $\{ e_{\mu} \}$ are all distinct;
\item there exists a monomial $x_{i_{0}}^{m_{2,0}}\cdots x_{i_{k-1}}^{m_{2,k-1}}$ of degree $d_{2}$ and for $\mu = 1,\ldots,k-1$ there exist monomials $x_{i_{0}}^{m_{1,0}}\cdots x_{i_{k-1}}^{m_{1,k-1}} x_{e_{\mu}}$ of degree $d_{1}$ where the $\{ e_{\mu} \}$ are all distinct;
\item for $\mu = 1,\ldots, k$ there exist monomials $x_{i_{0}}^{m_{1,0}}\cdots x_{i_{k-1}}^{m_{1,k-1}}x_{e_{\mu}^{1}}$ and $x_{i_{0}}^{m_{2,0}}\cdots x_{i_{k-1}}^{m_{2,k-1}} x_{e_{\mu}^{2}}$ of degrees $d_1$ and $d_2$, respectively, such that $\{ e_{\mu}^{1} \}$ are all distinct, $\{ e_{\mu}^{2} \}$ are all distinct and $\{ e_{\mu}^{1}, e_{\mu}^{2} \}$ contains at least $k+1$ distinct elements.
\end{enumerate}
\end{thm}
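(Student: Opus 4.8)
The plan is to reduce quasismoothness to a Jacobian rank condition and then run a dimension count on an incidence variety, stratified by coordinate support. Writing $X = V(f_1,f_2)$ with each $f_a$ a general form of degree $d_a$, the affine cone $C_X \subset \AA^{n+1}$ is quasismooth precisely when the Jacobian matrix $J = (\partial f_a/\partial x_j)$ has rank $2$ at every point $p \in C_X \setminus \{\underline{0}\}$. First I would record the elementary but decisive dictionary between partial derivatives and monomials: for a point $p$ whose nonvanishing coordinates are exactly those indexed by a set $I$, one has $(\partial x^m/\partial x_j)(p) \neq 0$ if and only if $x^m = x_j \cdot x^{m'}$ with $x^{m'}$ supported on $I$. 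Thus the only monomials of $f_a$ that can contribute to $J(p)$ are those supported on $I$ (supplying derivatives in the directions $i \in I$) and those of the shape $x_e \cdot (\text{monomial on } I)$ with $e \notin I$ (each supplying the single derivative $\partial/\partial x_e$). This is exactly the origin of the two families of monomials appearing in cases (i)--(iv): the pure monomials $x_{i_0}^{m_0}\cdots x_{i_{k-1}}^{m_{k-1}}$ and the monomials carrying one extra variable $x_{e_\mu}$.

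Next I would set up the incidence variety $Z = \{(p,f_1,f_2) : p \neq \underline{0},\ f_1(p)=f_2(p)=0,\ \rk J(p) < 2\}$ and project to the parameter space $\mathcal L_1 \times \mathcal L_2$ of pairs of forms. Quasismoothness of the general $X$ is equivalent to $Z$ not dominating $\mathcal L_1 \times \mathcal L_2$, and by stratifying $Z = \bigcup_I Z_I$ according to the support $I$ of $p$ it suffices to check, for each nonempty $I$, that the locus of bad pairs $(f_1,f_2)$ has codimension exceeding $\dim\{p : \operatorname{supp} p = I\} = |I|$. Fixing a general $p$ in the stratum and using the torus action to keep fibre dimensions constant, this becomes a purely combinatorial count of how many independent linear conditions the requirements $f_1(p)=f_2(p)=0$ and $\rk J(p) < 2$ impose, and the answer is governed entirely by which of the two monomial families is available on $I$. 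One subtlety to flag here is Euler's relation $\sum_j a_j x_j \partial_j f = d f$, which on the cone forces a single linear dependence among the derivatives in the $I$-directions and must be built into the count.

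Then I would match the four cases to the four combinatorial patterns, the reverse inequality in each case simultaneously yielding necessity. Case (i), where both forms carry a monomial supported on $I$, is handled by Bertini applied inside the coordinate subspace $L_I = \{x_j = 0 : j \notin I\}$: for general forms the restricted pair cuts out a quasismooth complete intersection there, so already the $I$-directions furnish a rank-$2$ minor of $J(p)$. Cases (ii) and (iii) are the mixed situation in which one form restricts nontrivially to $L_I$ (supplying one independent row of $J$ from the $I$-directions) while the other vanishes on $L_I$ but, through $k-1$ monomials $x^{m}x_{e_\mu}$ with distinct $e_\mu$, supplies an independent row from the normal directions; the count $\mu = 1,\ldots,k-1$ is exactly what is needed to defeat the rank drop over the $|I| = k$-dimensional stratum. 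Case (iv) is the genuinely nontrivial one: both forms vanish on $L_I$, so the whole stratum lies in $C_X$ and the rank-$2$ condition must be realized entirely by the $2 \times (\text{normal directions})$ coefficient matrix of the $x_{e_\mu^a}$-monomials, and the requirement that $\{e_\mu^1, e_\mu^2\}$ contain at least $k+1$ distinct elements is precisely the numerical threshold guaranteeing this matrix has rank $2$ at the general point of the $k$-dimensional stratum.

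I expect the main obstacle to be case (iv): showing that the threshold of $k+1$ distinct elements is both sufficient and sharp requires a rank analysis of the coefficient matrix as $p$ ranges over the entire torus of the stratum rather than at a single point, and one must rule out accidental rank drops arising from the interaction of the two rows. Finally I would dispose of the standing hypothesis that $X$ is not the intersection of a linear cone with another hypersurface: if some $d_a$ equals a weight $a_j$, the corresponding form can be used to eliminate the variable $x_j$, reducing $X$ to a hypersurface in a lower-dimensional weighted projective space and hence to Theorem~\ref{thm:quasismooth_codim_1}, so this degenerate case is excluded at the outset.
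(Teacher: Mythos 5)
Your proposal cannot be checked against a proof in the paper, because the paper gives none: the statement is quoted verbatim from Iano-Fletcher \cite[Theorem~8.7]{IF00} as background (note that the source's statement concerns the \emph{general} complete intersection of the given degrees, a word the quotation drops and which your reading correctly reinstates). Your plan --- stratifying the punctured affine cone by coordinate support $I$, translating Jacobian entries into the two monomial families (pure monomials on $I$ versus monomials $x_{e}\cdot x^{m}$ contributing single normal derivatives), invoking the Euler relation among the $I$-direction derivatives, and running $\Cstar$-invariant dimension counts in which the thresholds $k-1$, $k$, and $k+1$ arise because a nonempty invariant bad locus in the punctured cone has dimension at least one --- is essentially the same argument as Iano-Fletcher's original proof, so modulo the expected bookkeeping (e.g.\ in cases (ii)--(iii) the normal-row count need only defeat the rank drop along the $(k-1)$-dimensional locus $\{f_1|_{L_I}=0\}$ rather than the full $k$-dimensional stratum) it is the standard and correct route.
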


\subsection{Cascades of surfaces}
\label{sec:cascadesofsurfaces}

For each integer $k=3$ or~$k>4$ we study a cascade of surfaces obtained from the weighted projective space $\PP(1,1,k)$ by blowing up general points and contracting exceptional curves. In fact, as mentioned in the introduction, the cascades are particularly simple: all but one surface in each cascade is obtained from $\PP(1,1,k)$ via blow-up in $k+4$ general smooth points.

\begin{eg}
The cascade of del~Pezzo surfaces with a single $\frac{1}{5}(1,1)$ singularity is:
\begin{figure}[H]
\centering
\begin{tikzpicture}
[-,auto,node distance=1.5cm, thick,main node/.style={circle,draw,font=\sffamily \Large\bfseries}]
  \node[text=black] (1) {$\mathbb{P}(1,1,5)$};
  \node[text=black] (2) [right=0.5cm of 1] {$X_{5}^{(1)}$};
  \node[text=black] (3) [right of=2] {$X_{5}^{(2)}$};
  \node[text=black] (4) [right of=3] {$X_{5}^{(3)}$};
  \node[text=black] (5) [right of=4] {$X_{5}^{(4)}$};
  \node[text=black] (6) [right of=5] {$X_{5}^{(5)}$};
  \node[text=black] (7) [right of=6] {$X_{5}^{(6)}$};
  \node[text=black] (8) [right of=7] {$X_{5}^{(7)}$};
  \node[text=black] (9) [right of=8] {$X_{5}^{(8)}$};
  \node[text=black] (10) [right of=9] {$X_{5}^{(9)}$};
  \node[text=black] (11) [below of=7] {$B_{5}^{(5)}$};
 
  \path[every node/.style={font=\sffamily}]
    (2) edge[->] (1)
    (3) edge[->] (2)
    (4) edge[->] (3)
    (5) edge[->] (4)
    (6) edge[->] (5)
    (7) edge[->] (6)
    (8) edge[->] (7)
    (9) edge[->] (8)
    (10) edge[->] (9)
    (7) edge[->] (11);

\end{tikzpicture}
\end{figure}
\noindent Properties of these surfaces are given in the following table:

\begin{center}
\begin{tabular}{c | c | c}
Surface & Fano Index & Is toric \\ \hline \hline
$\PP(1,1,5)$ & 7 & Yes \\ \hline
$X_{5}^{(i)}$, for $i \in \{1,2\}$ & 1 & Yes \\ \hline
$X_{5}^{(i)}$, for $i \in \{ 3,4,\ldots,9 \}$ & 1 & No \\ \hline
$B_{5}^{(5)}$ & 2 & No \\ \hline
\end{tabular}
\end{center}
\bigskip

\noindent These properties generalise to any cascade appearing in Theorem~\ref{thm:cascade} in the obvious way. The above table also illustrates how our work overlaps with the classifications of del~Pezzo surfaces with Fano index $> 1$ by Alexeev--Nikulin~\cite{AN06} and Fujita--Yasutake~\cite{FY17}, and the classification by Dais~\cite{DToricdPs} of toric del~Pezzo surfaces with exactly one singular point.
\end{eg}

\begin{dfn}
\label{dfn:cascade_1}
For a given $k \in \ZZ_{> 0}$ let $X_k := \PP(1,1,k)$ and let $X^{(l)}_k$ denote the blow-up of $\PP(1,1,k)$ in $l$ general points. Assume that 
\[
l < \frac{(k+2)^2}{k}.
\]
\end{dfn}

\begin{rem}
The degree of $\PP(1,1,k)$ is $(k+2)^2/k$ and thus the bound on $l$ in Definition~\ref{dfn:cascade_1} ensures that $X^{(l)}_k$ is a del~Pezzo surface.
\end{rem}

\noindent The cascade consists of the surfaces $X^{(l)}_k$ for a fixed value of $k$ and all possible values of $l$, along with an additional surface obtained by contracting a curve on $X^{(k+1)}_k$.

\begin{dfn}
\label{dfn:cascade_2}
Fix a positive integer $k$ and $k+1$ points $\{p_i : 1 \leq i \leq k+1 \}$ on $\PP(1,1,k)$. There is a unique curve $C$ in the linear system $\cO(k)$ passing through these $k+1$ points. Blow-up all the points $p_i$ and let $C'$ be the strict transform of the curve $C$. Let $B^{(k)}_k$ denote the surface obtained by contacting $C'$.
\end{dfn}
This is the obvious generalisation of the construction of $B^{(1)}_1 \cong \PP^1 \times \PP^1$ from $\PP^2$. In our constructions of low codimension models for the surfaces $X^{(l)}_{k}$,~$B^{(k)}_k$ we make use of alternate descriptions of $X^{(k+2)}_k$, $X^{(k+3)}_k$, and $X^{(k+4)}_{k}$ depending on the parity of $k$.
\begin{dfn}
\label{dfn:S_k_plus_2}
Fix a positive integer $k$ and $(k+2)$ points  $\{p_i : 1 \leq i \leq k+2 \}$ on the diagonal $\Delta \subset \PP^1 \times \PP^1$. Let $S_k$ denote the surface obtained by blowing up the points $p_i$. Letting $\Delta$ also denote the strict transform of the diagonal, it follows immediately that $\Delta^2 = -k$.
\end{dfn}

\begin{lem}
\label{lem:alternate_blowup}
The surface $S_k$ is a minimal resolution of $X^{(k+2)}_k$. The resolution contracts the strict transform of the diagonal in $\PP^1\times \PP^1$.
\end{lem}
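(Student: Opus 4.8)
The plan is to identify both $S_k$ and the minimal resolution of $X^{(k+2)}_k$ with a single blow-up of the Hirzebruch surface $\FF_k$, and then match up the relevant negative curves. Recall that the minimal resolution of $\PP(1,1,k)$ is $\FF_k$, with the contraction $\FF_k \to \PP(1,1,k)$ collapsing the negative section $C_0$ (satisfying $C_0^2=-k$) to the $\tfrac1k(1,1)$ point and restricting to an isomorphism elsewhere. Since blowing up points away from the singular locus commutes with this resolution, the minimal resolution of $X^{(k+2)}_k$ is the blow-up of $\FF_k$ at $k+2$ general points lying off $C_0$, the strict transform of $C_0$ being the unique $(-k)$-curve contracted to the singularity. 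It therefore suffices to produce an isomorphism between $S_k$ and this blow-up carrying the strict transform of the diagonal onto the strict transform of $C_0$.

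To build this I would work on $S_k$ directly. Writing $f,g$ for the two ruling classes on $\PP^1\times\PP^1$ (so $f^2=g^2=0$, $f\cdot g=1$ and $\Delta=f+g$) and $E_1,\dots,E_{k+2}$ for the exceptional curves over the $p_i$, the strict transform of the diagonal is $\widetilde\Delta = f+g-\sum_i E_i$ with $\widetilde\Delta^2=2-(k+2)=-k$. Fixing the ruling $g$, the strict transforms $\widetilde g_i = g-E_i$ of the fibres through the $p_i$ are $k+2$ pairwise disjoint $(-1)$-curves, so they may be contracted simultaneously by a birational morphism $\phi\colon S_k \to Z$. The computations $\widetilde\Delta\cdot\widetilde g_i = 0$ and $\widetilde\Delta\cdot g=1$ show that $\bar\Delta := \phi(\widetilde\Delta)$ is a section of the induced ruling $Z\to\PP^1$ with $\bar\Delta^2=-k$. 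Here one must be careful to contract the $\widetilde g_i$ \emph{simultaneously}: performing the analogous elementary transformations one at a time introduces spurious negative sections and makes the Hirzebruch index oscillate, so that naive reasoning does not land on $\FF_k$.

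Because $Z$ has Picard rank two and a $\PP^1$-fibration it is a Hirzebruch surface, and since it carries an irreducible curve $\bar\Delta$ of self-intersection $-k<0$ we must have $Z\cong\FF_k$ with $\bar\Delta=C_0$ its negative section. Thus $\phi$ exhibits $S_k$ as the blow-up of $\FF_k$ at the $k+2$ points $q_i=\phi(\widetilde g_i)$; from $\widetilde\Delta\cdot\widetilde g_i=0$ these lie off $C_0=\bar\Delta$, and $\widetilde\Delta$ is precisely the strict transform of $C_0$. Combined with the first paragraph, this shows that contracting $\widetilde\Delta$ realises $S_k$ as the minimal resolution of $X^{(k+2)}_k$ and contracts the strict transform of the diagonal, as claimed.

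The hard part will be the genericity bookkeeping: I must confirm that $p_i$ general on $\Delta$ yields $q_i$ (equivalently, points $\bar q_i\in\PP(1,1,k)$) in general position, so that the resulting surface is the \emph{general} member $X^{(k+2)}_k$ and not a special degeneration. I would settle this with a dimension count. The family of blow-ups of $\PP^1\times\PP^1$ in $k+2$ points of $\Delta$, taken modulo $\operatorname{Aut}(\PP^1\times\PP^1,\Delta)$ (the diagonal $\mathrm{PGL}_2$, of dimension $3$), has dimension $(k+2)-3=k-1$; this matches $2(k+2)-\dim\operatorname{Aut}\PP(1,1,k) = 2(k+2)-(k+5)=k-1$, the dimension of the family of surfaces $X^{(k+2)}_k$. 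As the construction above is the restriction of an actual morphism of families whose image lands in the $X^{(k+2)}_k$ family, equality of these dimensions forces the construction to be dominant, so a general configuration on $\Delta$ produces a general $X^{(k+2)}_k$. I expect this matching of general configurations, rather than the birational geometry, to be the only genuinely delicate point.
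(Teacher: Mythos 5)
Your argument is essentially the paper's proof: you contract the strict transforms of the fibres through the $p_i$ simultaneously to obtain a $\PP^1$-bundle, identify it as $\FF_k$ via the $(-k)$-section that is the image of $\widetilde{\Delta}$, and then match this with the minimal resolution of the blow-up of $\PP(1,1,k)$ at $k+2$ smooth points --- in fact you make explicit the intersection computations ($\widetilde{\Delta}\cdot\widetilde{g}_i=0$, $\widetilde{\Delta}\cdot g=1$, $q_i\notin C_0$) that the paper's proof leaves implicit. The one soft spot is your closing genericity step, which the paper does not address at all: equality of dimensions of the two parameter spaces does not by itself force dominance (the map could have positive-dimensional fibres and lower-dimensional image), so you should add that the construction is generically finite --- immediate here since the elementary transformations are reversible, so the configuration on $\Delta$ is recovered from the surface up to automorphism --- after which your dimension count does close the argument.
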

\begin{proof}
Let $\pi_j$,~$j=1$,~$2$ denote the $j$th projection $\pi_j \colon \PP^1\times\PP^1 \rightarrow \PP^1$ and let $E_i$ denote the strict transform of the fibre $\pi^{-1}_1(\pi_1(p_i))$. Each morphism $\pi_j$ induces a morphism $S_k \rightarrow \PP^1$ with $k+2$ reducible fibres. Each of these fibres contains precisely one of the curves $E_i$. Thus, by contracting all the curves $E_i$, obtain a surface $\tilde{S}_k$ together with a morphism $\tilde{S}_k \rightarrow \PP^1$ such that all its fibres are isomorphic to $\PP^1$. That is, $\tilde{S}_k$ is isomorphic to the Hirzebruch surface $\FF_k$. Consider the following commuting diagram:
\[ \begin{tikzcd}[row sep=large, column sep = large]
S_{k}  \arrow{r} \arrow[swap]{d} &X_{k}^{(k+2)} \arrow{d} \\
\tilde{S}_k \arrow{r} & \PP(1,1,k)
\end{tikzcd}
\]
Thus $S_{k} \rightarrow X_{k}^{k+2}$ is a minimal resolution.
\end{proof}

\begin{dfn}
\label{dfn:S_k_plus_3}
Fix a positive integer $k$ and $k+4$ points $\{p_i : 1 \leq i \leq k+4 \}$ on a conic in $\PP^2$. Let $S'_k$ denote the surface obtained by blowing up the points $p_i$. If $C$ denotes the strict transform of the conic, it follows immediately that $C^2 = -k$.
\end{dfn}

\begin{lem}
\label{lem:alternate_blowup_2}
The surface $S'_k$ is a minimal resolution of $X^{(k+3)}_k$. The resolution contracts the strict transform of the conic in $\PP^2$ used to define $S'_k$.
\end{lem}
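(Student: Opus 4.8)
The plan is to mirror the proof of Lemma~\ref{lem:alternate_blowup}, replacing the two rulings of $\PP^1\times\PP^1$ by the pencil of lines through one of the chosen points. Write $H$ for the hyperplane class on $\PP^2$ and $E_1,\dots,E_{k+4}$ for the exceptional curves of the blow-up defining $S'_k$, so that the strict transform of the conic is $C = 2H - \sum_{i=1}^{k+4}E_i$ with $C^2 = -k$. Since $C$ is a smooth rational curve with $C^2 = -k \leq -2$ in the relevant range, contracting it yields a surface $Y$ with a single $\frac{1}{k}(1,1)$ singularity, and as the exceptional locus of $S'_k \to Y$ is the single curve $C$ (which is the minimal resolution of a $\frac1k(1,1)$ point), this map is automatically a minimal resolution. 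It therefore remains only to identify $Y$ with $X^{(k+3)}_k$.

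First I would produce the analogue of the Hirzebruch surface appearing in Lemma~\ref{lem:alternate_blowup}. Fix the point $p_1$ and consider the pencil $|H - E_1|$ of strict transforms of lines through $p_1$; this defines a morphism $S'_k \to \PP^1$. For each $j \in \{2,\dots,k+4\}$ the line $\overline{p_1 p_j}$ meets the conic in a second chosen point, so the corresponding fibre is reducible, namely $H - E_1 = L_{1j} + E_j$ where $L_{1j} := H - E_1 - E_j$; these lines are distinct because three points on a conic are never collinear. A direct computation gives $L_{1j}^2 = -1$, $L_{1j}\cdot L_{1j'} = 0$ for $j \neq j'$ (they share $p_1$), and $L_{1j}\cdot C = 0$, so the $k+3$ curves $L_{12},\dots,L_{1,k+4}$ are pairwise disjoint $(-1)$-curves, each disjoint from $C$. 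Contracting them simultaneously produces a smooth surface $W$ on which the morphism to $\PP^1$ has all fibres isomorphic to $\PP^1$; hence $W$ is a Hirzebruch surface, and as it has Picard rank two and contains the image of $C$, an irreducible curve of self-intersection $-k$, necessarily $W \cong \FF_k$ with the image of $C$ the negative section $\Sigma$.

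These contractions assemble into the commuting square
\[
\begin{tikzcd}[row sep=large, column sep=large]
S'_k \arrow{r}\arrow[swap]{d} & X^{(k+3)}_k \arrow{d}\\
\FF_k \arrow{r} & \PP(1,1,k)
\end{tikzcd}
\]
in which the left vertical map contracts $L_{12},\dots,L_{1,k+4}$, the bottom map contracts $\Sigma$, and the top map contracts $C$. Since $C$ and the $L_{1j}$ are pairwise disjoint the order of contraction is immaterial, so $Y$ (the contraction of $C$) is obtained from $\PP(1,1,k)$ by blowing up the $k+3$ image points of the $L_{1j}$. Because $L_{1j}\cdot C = 0$ these points lie off $\Sigma$, hence map to smooth points of $\PP(1,1,k)$, and for $p_1,\dots,p_{k+4}$ general on the conic they are $k+3$ general points, giving $Y \cong X^{(k+3)}_k$.

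The one genuine point requiring care is the passage from general position of the $p_i$ on the conic to general position of their image points on $\PP(1,1,k)$; everything else is the same bookkeeping as in Lemma~\ref{lem:alternate_blowup}. I expect this genericity check, together with confirming that $W$ admits no irreducible negative curve other than $\Sigma$, to be the only steps demanding more than a line.
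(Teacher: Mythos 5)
Your proof is correct and follows essentially the same route as the paper: the paper also performs the elementary transformation along the pencil of lines through one of the chosen points (it uses $p_{k+4}$, passing through $\FF_1$, where you use $p_1$ and contract the $(-1)$-curves $H-E_1-E_j$ directly), arrives at $\FF_k$ with the image of the conic as the negative section, and concludes via the same commuting square as in Lemma~\ref{lem:alternate_blowup}. Your intersection-theoretic bookkeeping and the genericity caveat you flag are in fact more explicit than the paper's proof, which leaves these points implicit.
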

\begin{proof}
Let $C$ be a conic in $\PP^2$ and fix $k+4$ points $\{p_i : 1 \leq i \leq k+4 \}$ on $C$. Consider the surface obtained by blowing up only $p_{k+4}$ and the strict transform of $C$. The blow-up is isomorphic to the first Hirzebruch surface $\FF_1$. Let $\pi \colon \FF_1 \rightarrow \PP^1$ be its projection to $\PP^1$. Blow-up the points $p_i$,~$ 1 \leq i \leq k+3$ and contract the strict transforms of the fibres $\pi^{-1}(\pi(p_i))$ of $\pi$. In this way obtain a ruled surface with a unique $-k$ curve, i.e. the surface $\FF_k$, the minimal resolution of $\PP(1,1,k)$. By a similar argument to Lemma~\ref{lem:alternate_blowup}, $S_{k}' \rightarrow X_{k}^{k+3}$ is a minimal resolution.
\end{proof}

\begin{rem}
Consider the anti-canonical degree 
\[
\Big( -K_{X^{(l)}_k} \Big)^2 = k-l+4+\frac{4}{k}.
\]
In the case $k=1$ of the smooth del~Pezzo surfaces the most interesting surfaces are those with degree $\leq 3$. However, once $k > 4$ the interesting cases from the end of the cascade are lost, even though the cascades grow in length: for large values of $k$ there are no surfaces with geometry as rich as the cubic surface or the lower degree del~Pezzo surfaces. The cases $k=2$,~$4$ are closely related to the smooth del~Pezzo surfaces (via $\QQ$-Gorenstein smoothing) and the case $k=3$ is considered in detail in~\cite{CH16}.
\end{rem}

\subsection{Hilbert Series}
\label{sec:Hilbert_Series}
We study the Hilbert series of the blow-up of $\PP(1,1,k)$ in $l \in \{ k+2, k+3, k+4 \}$ general points. Following~\cite{RS03}, consider the Hilbert series of $\PP(1,1,k)$ polarised by the anti-canonical divisor $-K_{\PP(1,1,k)} = \cO(k+2)$. This can be calculated by taking the Hilbert series of $\PP(1,1,k)$ polarised by $\cO(1)$ given by 
\[ \frac{1}{(1-s)^{2}(1-s^{k})}, \] multiplying through by $(1-s^{k+2})^{2}(1-s^{k(k+2)})$, truncating to the polynomial consisting only of terms divisible by $t^{k+2}$, and making the substitution $s^{k+2}=t$. The calculation splits into two cases:
\begin{enumerate}
\item $k$ is even. In this case, letting $k=2m$, obtain
\begin{align*}
&H_{\PP(1,1,k)} = \frac{P_{\PP(1,1,k)}(t)}{(1-t)^{2}(1-t^{k})}, \\
\text{ where }P_{\PP(1,1,k)}(t)=1 + &\sum\limits_{i=1}^{m-1} (k+4) t^{i} + (k+5)t^{m} + (k+5)t^{m+1} + \sum\limits_{i=m+2 }^{k} (k+4)t^{i} +t^{k+1} .
\end{align*}
\medskip
\item $k$ is odd. In this case, letting $k=2m-1$, obtain
\begin{align*}
&H_{\PP(1,1,k)} = \frac{P_{\PP(1,1,k)}(t)}{(1-t)^{2}(1-t^{k})}, \\
\hspace*{-2.4cm} \text{ where }  P_{\PP(1,1,k)}(t) = 1 + &\sum\limits_{i=1}^{m-1} (4+k) t^{i} + (k+6)t^{m} + \sum\limits_{i=m+1 }^{k} (k+4)t^{i} +t^{k+1} .
\end{align*}
\medskip
\end{enumerate}

\noindent A smooth blow-up has a Hilbert contribution 
\[
Q = -\frac{t}{(1-t)^{3}} = -\frac{t(1-t^{k})}{(1-t)^{3}(1-t^{k})} = -\frac{t+t^{2}+t^{3}+t^{4}+ \ldots + t^{k}}{(1-t)^{2}(1-t^{k})},
\]
and hence the Hilbert series of $X^{(l)}_k$ is 
\[
 H_{\PP(1,1,k)} + l \times Q,
\]
\noindent for all values of $k \in \ZZ_{\geq 1}$.
Calculating the Hilbert series for $l \in \{ k+2, k+3, k+4 \}$ suggests a low codimension model for the surface $X_{k}^{(l)}$ in each case. When these models occur in codimension $\leq 2$ they coincide with the models obtained by Laurent inversion in~\S\ref{sec:low_codim_models}; when these models occur in codimension three or four we present a different model in~\S\ref{sec:low_codim_models} which is compared with the model suggested by the Hilbert series. First consider the case $k=2m$ for some $m \in \ZZ_{\geq 1}$:
\medskip

\begin{center}
{\renewcommand{\arraystretch}{2}
\begin{tabular}{c | c | c}
$l$ & Hilbert Series & Suggested Model \\ \hline \hline
$k+4$ & {\small$\frac{1-t^{k+2}}{(1-t)^{2}(1-t^{m})(1-t^{m+1})}$} & $X_{k+2} \subset \PP(1,1,m,m+1)$ \\ \hline
$k+3$ & {\small $\frac{1-t^{m+2}}{(1-t)^{3}(1-t^{m})}$} & $X_{m+2} \subset \PP(1,1,1,m)$ \\ \hline
$k+2$ & {\small $\frac{(1-t^2)(1-t^{m+1})}{(1-t)^{4}(1-t^{m})}$} & $X_{2,m+1} \subset \PP(1,1,1,1,m)$ 
\end{tabular}}
\end{center}
\medskip
Consider the case $k=2m-1$ for some $m \in \ZZ_{\geq 1}$:
\medskip

\begin{center}
{\renewcommand{\arraystretch}{2}
\begin{tabular}{c | c | c}
$l$ & Hilbert Series & Suggested Model \\ \hline \hline
$k+4$ & {\small $\frac{(1-t^{k+1})^2}{(1-t)^{2}(1-t^{m})^{2}(1-t^{k})}$} & $X_{k+1,k+1} \subset \PP(1,1,m,m,k)$ \\ \hline
$k+3$ & {\small $\frac{1-2t^{m+1}-3t^{k+1}+3t^{k+2} + 2t^{3m} -t^{2k+3}}{(1-t)^{3}(1-t^{m})^{2}(1-t^{k})}$} & $\text{Pf}_{5,5} \subset \PP(1,1,1,m,m,k)$ \\ \hline
$k+2$ & {\small $\frac{1-t^{2}-4t^{m+1}+4t^{m+2}-4t^{k+1}+8t^{k+2} -4t^{k+3} + 4t^{3m} -4t^{m+2} -t^{2k+2} + t^{2k+4} }{(1-t)^{4}(1-t^{m})^{2}(1-t^{k})}$} & codim 4 
\end{tabular}}
\end{center}
\medskip

Note that the models for odd values of $k$ generally appear in higher codimension. For odd values of $k$ the codimension appearing in the unprojection cascade directly generalises case $k=1$ (that is, of the original ten del~Pezzo surfaces). The proto-typical case for even values of $k$ is the case $k=2$, and each of the surfaces $X^{(l)}_2$ admits a smoothing to the surface $X^{(l+1)}_1$. Thus $X^{(4)}_2$,~$X^{(5)}_2$, and $X^{(6)}_2$ admit a smoothing to the del~Pezzo surfaces of degrees $4$,~$3$ and $2$ respectively, which are all known to have models of codimension $\leq 2$ in weighted projective spaces.

\begin{rem}
\label{rem:small_k}
In the cases $k=2$ and $k=4$ observe that all the constructions tabulated above are well known models of del~Pezzo surfaces. This is expected, since the $\frac{1}{k}(1,1)$ singularities are $T$-singularities precisely in these two cases.
\end{rem}
\section{Laurent Inversion}
\label{sec:laurent_inversion}

In this section we recall the method of \emph{Laurent inversion}~\cite{CKP15}, which is used to construct models for the surfaces in these cascades. We freely use definitions and basic results in toric geometry: see the books by Cox--Little--Schenck and Fulton~\cite{Cox--Little--Schenck,Fulton93} for more details on this subject.

Broadly speaking Laurent inversion takes a polytope $P$ together with a certain decoration of $P$ (called a \emph{scaffolding}) as input and returns a torus invariant embedding of the toric variety associated to $P$.

\subsection{Scaffolding}

Let $N$ be a lattice and recall that an integral polytope $P \subset N_{\QQ} := N \otimes \QQ$ is said to be \emph{Fano} if it has primitive vertices, contains the origin in its interior and is full dimensional in $N$. A scaffolding of a Fano polytope $P$ is a presentation of $P$ as the convex hull of a collection of polyhedra of sections of nef divisors on a (fixed) toric variety. We restrict our interest to the case of $N$ being a rank two lattice.

\begin{dfn}[\cite{CKP15}]
\label{dfn:scaffolding}
Fix the following data:
\begin{enumerate}
\item a lattice $N \cong \ZZ^2$ with a decomposition $N = \bar{N} \oplus N_{U}$. Denote the dual lattice by $M:=\Hom (N,\ZZ)$ and the dual decomposition  $M=\bar{M} \oplus M_{U}$;
\item a Fano polygon $P \subset N_{\QQ}$;
\item a projective toric variety $Z$, known as the \emph{shape}, given by a fan in $\bar{M}$ whose rays span $\bar{M}$.
\end{enumerate}
A \emph{scaffolding} of $P$ is a set of pairs $(D,\chi)$, known as \emph{struts}, where $D$ is a nef divisor on $Z$ and $\chi$ is an element of $N_{U}$ such that
\[
P = \text{conv}\Big(P_{D} + \chi : (D,\chi) \in S \Big),
\]
where $P_D$ is the polyhedron of sections of the torus invariant divisor $D$.
\end{dfn}

\begin{rem}
\label{rem:scaff_assumptions}
Although not required by the definition, impose two additional assumptions to simplify the Laurent inversion algorithm:
\begin{enumerate}
\item every vertex of $P$ is met by precisely one strut;
\item there is a basis $\{e_i : 1 \leq i \leq \dim N_U \}$ of $N_U$ such that the pair $(\cO,e_i) \in S$ for all values of $i$. We say, following~\cite{CKP15}, that these struts correspond to `uneliminated variables'.
\end{enumerate}
\end{rem}

\begin{eg}
First fix the data (i)--(iii) appearing in Definition~\ref{dfn:scaffolding}. Let $N$ be a rank two lattice with $N_{U} = \{ 0 \}$. Thus $M \cong \ZZ^2$ and $M_{U} = \{ 0\}$. Consider the Fano polygon $P$ with vertices $(0,1), (1,0), (1,-1), (0,-1), (-1,0), (-1,1)$, and choose $Z = \PP^2$. The fan $\Sigma_Z$ corresponding to $Z$ is:
\begin{center}
\begin{tikzpicture}[baseline=-0.65ex,scale=0.5, transform shape]
\begin{scope}
\clip (-2.3,-2.3) rectangle (2.3cm,2.3cm); 
\filldraw[fill=cyan, draw=red] (0,0) -- (3,0); 
\filldraw[fill=cyan, draw=red] (0,0) -- (0,3);
\filldraw[fill=cyan, draw=red] (0,0) -- (-3,-3);
\foreach \x in {-7,-6,...,7}{                           
    \foreach \y in {-7,-6,...,7}{                       
    \node[draw,shape = circle,inner sep=1pt,fill] at (\x,\y) {}; 
    }
}
 \node[draw,shape = circle,inner sep=4pt] at (0,0) {}; 
\end{scope}
\end{tikzpicture}
\end{center}
Let $\Sigma_{Z}(1) = \{ \sigma_{1}, \sigma_{2}, \sigma_{3} \}$ and denote the generator of the ray $\sigma_{i}$ by $\rho_{i}$. Define a piecewise-linear function
\[
\phi_{i}(\rho_{j}) := 
\begin{cases} 
1, & \text{ if } i=j \\
0, & \text{ otherwise. }
\end{cases}
\]
Denote the divisor corresponding to $\phi_i$ by $D_i$. Consider the scaffold given by the three struts $(D_{1},0)$, $(D_{2},0)$ and $(D_{3},0)$. Computing the polyhedra $P_{D_{i}}$ obtain
\[ P_{D_{1}} = \Bigg\{ (x,y) \in N_{\RR} : \begin{array}{c} \big\langle (x,y) , (1,0) \big\rangle \geq -1 \\ \big\langle (x,y) , (0,1) \big\rangle \geq 0 \\ \big\langle (x,y) , (-1,-1) \big\rangle \geq 0 \end{array} \Bigg\} = \Bigg\{ (x,y) \in N_{\mathbb{R}} : \begin{array}{c} x \geq -1 \\ y \geq 0 \\ x+y \leq 0 \end{array} \Bigg\}, \]
\[ P_{D_{2}} = \Bigg\{ (x,y) \in N_{\RR} : \begin{array}{c} x \geq 0 \\ y \geq -1 \\ x+y \leq 0 \end{array} \Bigg\}, \]
\[ P_{D_{3}} = \Bigg\{ (x,y) \in N_{\RR} : \begin{array}{c} x \geq 0 \\ y \geq 0 \\ x+y \leq 1 \end{array} \Bigg\}, \]
which is illustrated below.

\begin{center}
\begin{tikzpicture}[transform shape]
\begin{scope}
\clip (-1.3,-1.3) rectangle (1.3cm,1.3cm); 
\draw[line width=0.5mm,black] (-1,1) -- (0,1) -- (1,0) -- (1,-1) -- (0,-1) -- (-1,0) -- (-1,1); 
\draw[line width=0.5mm, red] (-0.9,0.9) -- (-0.1,0.1) -- (-0.9,0.1) -- (-0.9,0.9) -- (-0.1,0.1); 
\draw[line width=0.5mm, red] (0.0,0.1) -- (0.0,0.85) -- (0.75,0.1) -- (0.0,0.1) -- (0.0,0.85);
\draw[line width=0.5mm, red] (0.1,-0.1) -- (0.9,-0.9) -- (0.1,-0.9) -- (0.1,-0.1); 
\foreach \x in {-7,-6,...,7}{                           
    \foreach \y in {-7,-6,...,7}{                       
    \node[draw,shape = circle,inner sep=1pt,fill] at (\x,\y) {}; 
    }
}
 \node[draw,shape = circle,inner sep=4pt] at (0,0) {}; 
\end{scope}
\end{tikzpicture}
\end{center}

\end{eg}

\begin{rem}
\label{rem:types_of_scaffolding}
With the exception of the scaffolding appearing in Figure~\ref{fig:anti_canonical_cod3} we will only use three types of scaffolding:
\begin{enumerate}
\item $N = \ZZ^2$,~$N_U=\ZZ$, $Z=\PP^1$;
\item $N = \ZZ^2$,~$N_U=\{0\}$, $Z=\PP^1\times\PP^1$;
\item $N = \ZZ^2$,~$N_U=\{0\}$, $Z=\PP^2$.
\end{enumerate}

\noindent Examples of these three types of scaffolding can be found in~\S\ref{sec:upto_k_plus_2},~\S\ref{sec:k_plus_2}, and~\S\ref{sec:k_plus_3} respectively.
\end{rem}

\subsection{Laurent Inversion}

Laurent inversion~\cite{CKP15} is an algorithm to pass from a scaffolding $S$ of a Fano polytope $P$ to an embedding of the corresponding Fano toric variety $X_{P}$ in an ambient toric variety $Y_S$. The form of the algorithm~\ref{alg:laurent_inversion} presented applies to a scaffolding with shape $Z$ isomorphic to a product of projective spaces; note this is true in all three cases enumerated in Remark~\ref{rem:types_of_scaffolding}.


\begin{algorithm}[\cite{CKP15}]
	\label{alg:laurent_inversion}
	Let $S$ be a scaffolding of a Fano polytope $P$ with shape $Z$. Let $u = \dim N_U$ and let $r = |S| - u$, so that $S$ contains $r$ struts that do not correspond to uneliminated variables and $u$ struts that do correspond to uneliminated variables (see Remark~\ref{rem:scaff_assumptions}). Let $R$ be the sum of $|S|$ and the number $z$ of rays of $Z$. We determine an $ r \times R$ matrix $\cM$, which will be the weight matrix for our toric variety $Y$, as follows. Let $m_{i,j}$ denote the $(i,j)$ entry of $\cM$. Fix an identification of the rows of $\cM$ with the $r$ elements $(D_i,\chi_i)$ of $S$ which do not correspond to uneliminated variables, and an ordering $\Delta_1,\ldots,\Delta_z$ of the toric divisors in $Z$. Let $e_1,\ldots,e_u$ be the basis of $N_U$ given by Remark~\ref{rem:scaff_assumptions}.
	\begin{enumerate}
		\item For $1 \leq j \leq r$ and any $i$, let $m_{i,j} = \delta_{i,j}$.
		\item For $1 \leq j \leq u$ and any $i$, let $m_{i,r+j}$ be determined by the expansion 
		\[
		\chi_i = \sum_{j=1}^u{m_{i,r+j}e_j}.
		\]
		\item For $1 \leq j \leq z$, let $m_{i,|S|+j}$ be determined by the expansion
		\[
		D_i = \sum_{j=1}^z{m_{i,|S|+j}}\Delta_j.
		\]
	\end{enumerate}
	The weight matrix $\cM$ alone does not determine a unique toric variety -- a stability condition $\omega$ also needs to be specified. Unless otherwise stated, assume $\omega$ to be the sum of the first $|S|$ columns in $\cM$. Let $Y_\omega$ denote the toric variety determined once this choice has been made.
\end{algorithm}

\noindent This algorithm determines a toric variety $Y_S$. After choosing bases of $N_U$ and $\Div_{T_{\bar{M}}}(Z)$ the fan determined by $Y_S$ is contained in $(N_U \oplus \Div_{T_{\bar{M}}}(Z)) \otimes \QQ$.


\begin{thm}[\cite{CKP15}]
\label{thm:embedding}
Given a scaffolding $S$ of a Fano polytope $P$ the GIT data $(\cM,\omega)$ define a toric variety $Y_S$ with $\Cl(Y_S) \cong \ZZ^r$. Furthermore, there is a canonical embedding $X_P \hookrightarrow Y_S$. If $Z$ is isomorphic to a product of $k$ projective spaces, $X_P$ is the intersection of $k$ divisors, each of which is defined by a single equation in Cox co-ordinates, on $Y_S$, and 
\[
\omega = -K_{X} - \sum\limits_{i} L_{i},
\]
where the linear systems $L_i$ define $X_P$.
\end{thm}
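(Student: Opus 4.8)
The plan is to build $Y_S$ from its GIT/Cox presentation, exhibit $X_P$ inside it as an explicit complete intersection, and read off the remaining assertions by adjunction. First I would treat the class group: the degree homomorphism $\cM\colon\ZZ^R\to\ZZ^r$ is the quotient map $\ZZ^R\to\Cl(Y_S)$, and by construction it contains the $r\times r$ identity block $(m_{i,j}=\delta_{i,j})$ in its first $r$ columns, so it is surjective with a splitting and hence $\Cl(Y_S)\cong\ZZ^r$. With the stability $\omega$ fixed, the pair $(\cM,\omega)$ determines an irrelevant ideal and thus a geometric quotient $Y_S=(\AA^R\setminus V(B_\omega))/(\Gm)^r$; dualising $\cM$ places the fan of $Y_S$ in the lattice $N_U\oplus\Div_{T_{\bar M}}(Z)$, with one ray for each of the $R$ Cox coordinates — the first $|S|$ attached to the struts and the final $z$ to the rays of $Z$.

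Next I would produce the embedding. The dimension count $\dim Y_S=R-r=z+u$ against $\dim X_P=\dim\bar N+u$, together with $z=\sum_i(n_i+1)=\dim\bar N+k$ for $Z\cong\prod_{i=1}^k\PP^{n_i}$, gives $\dim Y_S=\dim X_P+k$; so I expect $X_P$ to be cut out by exactly $k$ equations. For each factor $\PP^{n_i}$, whose rays furnish the Cox coordinates $w_{i,0},\dots,w_{i,n_i}$, I would write a single equation obtained by summing, over the struts, the products of a strut monomial with the appropriate $w_{i,a}$, chosen so that every term lies in one class $L_i\in\Cl(Y_S)$. The resulting $k$ equations are the sections whose common zero locus $X$ I claim equals $X_P$.

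The heart of the argument, and the step I expect to be hardest, is identifying $X$ with $X_P$ together with its canonical toric structure. Here I would feed in the scaffolding identity $P=\conv{P_D+\chi:(D,\chi)\in S}$: the polyhedron $P_D$ of sections of a nef divisor $D$ on $Z$ records precisely the Cox monomials of $Z$ available to that strut, and translation by $\chi\in N_U$ embeds these data into $N=\bar N\oplus N_U$. Restricting the homogeneous coordinate ring of $Y_S$ along the $k$ equations and tracking the induced multigrading, I would match the surviving monomials with the lattice points of $P$ and of its faces, thereby showing that $X$ is irreducible of the expected dimension and that its torus orbits correspond bijectively to the cones over the faces of $P$ — that is, $X=X_P$ with its spanning fan, and the inclusion is the canonical embedding. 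The two normalisations of Remark~\ref{rem:scaff_assumptions} (each vertex of $P$ met by a single strut, and the presence of the struts $(\cO,e_i)$) are exactly what make this correspondence a bijection and keep the codimension equal to $k$.

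Finally, the formula for $\omega$ is bookkeeping via adjunction. The anticanonical class $-K_{Y_S}$ is the sum of all $R$ column-classes of $\cM$, whereas $\omega$ is by definition the sum of the first $|S|$; I would check that the defining classes $L_i$ account exactly for the remaining $z$ ray-columns, so that $\omega=-K_{Y_S}-\sum_i L_i$ holds in $\Cl(Y_S)$. Restricting to $X_P$ and applying adjunction, $-K_{Y_S}-\sum_i L_i$ restricts to $-K_{X_P}$, so $\omega$ polarises $X_P$ anticanonically; this is the asserted identity. Everything here is formal once the complete-intersection description of the previous paragraph is in place, which is why I regard the fan-matching as the genuine obstacle.
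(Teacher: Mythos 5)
You should know at the outset that the paper contains no proof of this statement: Theorem~\ref{thm:embedding} is quoted from \cite{CKP15}, so your proposal can only be compared with the argument in that source and with the way the theorem is used in \S\ref{sec:low_codim_models}. Your peripheral steps are sound. The identity block in $\cM$ does give $\Cl(Y_S)\cong\ZZ^r$; the dimension count $\dim Y_S = \dim X_P + k$ is correct; and the adjunction bookkeeping for $\omega$ (reading the ambiguous ``$-K_X$'' as $-K_{Y_S}$, with $\sum_i L_i$ equal to the sum of the $z$ ray-columns) checks out against every instance in the paper --- e.g.\ in \S\ref{sec:upto_k_plus_2} one has $\omega=(1,2)=(2,2+l)-(1,l)$, and in \S\ref{sec:k_plus_2} one has $\omega=(1,1)=(2m+1,2m+1)-2(m,m)$.

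There are, however, two genuine gaps at the step you yourself identify as the heart of the matter. First, your equations have the wrong shape: $X_P$, being a torus-equivariant subvariety, is cut out by \emph{binomials}, one for each projective-space factor of $Z$ --- compare $y_1y_2^l - x_2x_3$ in \S\ref{sec:upto_k_plus_2} and $x_1^my_1^m - x_2z_1$, $x_1^my_1^m - y_2z_2$ in \S\ref{sec:k_plus_2}. These arise from the rank-$k$ lattice of relations among the toric divisors of $Z$, i.e.\ from the divisor sequence $0 \rightarrow \bar{M} \rightarrow \Div_{T_{\bar{M}}}(Z) \rightarrow \Cl(Z)\cong\ZZ^k \rightarrow 0$. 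Your recipe --- summing over struts a strut monomial times an appropriate $w_{i,a}$ --- instead describes a \emph{general} member of $|L_i|$ (exactly the form $y_1f_l + x_2g_k$ used for general sections in \S\ref{sec:upto_k_plus_2}), not the toric fibre. Second, your plan to identify $X$ with $X_P$ by ``matching surviving monomials with lattice points of $P$'' is a restatement of the goal rather than an argument: nothing in the sketch rules out extra or embedded components in the intersection of the $k$ divisors, and the paper warns immediately after the theorem that even with the correct equations the complete-intersection property must be checked case by case when $Y_S$ is singular. The proof in \cite{CKP15} runs in the opposite direction: the scaffolding defines a lattice map $\theta\colon N \rightarrow N_U \oplus \Div_{T_{\bar{M}}}(Z)^\ast$ splitting the natural surjection; one verifies that the spanning fan of $P$ maps into the fan of $Y_S$, so that $X_P \hookrightarrow Y_S$ is a closed torus-equivariant embedding \emph{by construction}, and only afterwards reads off the ideal of the image, which is the lattice ideal of a subtorus closure and is generated by $k$ binomials because the relation lattice has rank $k=\rk \Cl(Z)$. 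Supplying either that fan-theoretic argument, or an honest proof that your $k$ divisors intersect in the reduced irreducible $X_P$, is what is missing from your proposal.
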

Of course, if $Y_S$ is smooth these define a complete intersection. In general this needs to verified on a case-by-case basis. There are many ways of embedding a toric variety into another toric variety, but Theorem~\ref{thm:embedding} allows us to unify a large number of classical constructions of Fano varieties into a simple format. For example, given a Fano polygon $P$ there is standard choice of scaffolding, obtained by taking $Z$ to be the toric variety associated to the normal fan of $P$. This recovers the anti-canonical embedding of $X_P$.

\begin{dfn}[\cite{CKP15}]
Fix a Fano polygon $P$ and let $Z$ be the minimal resolution of the toric variety determined by the normal fan of $P$. The \emph{anti-canonical scaffolding} of $P$ is the scaffolding $S$ with shape $Z$ consisting of the single nef divisor $D$ on $Z$ such that the polyhedron of sections of $D$ is equal to $P$.
\end{dfn}

The Laurent inversion algorithm applied to the anti-canonical scaffold determines an embedding of $X_P$ into the weighted projective space $\PP(1,a_1,\ldots,a_N)$. By construction this is the map into weighted projective space defined by the elements of $-K_{X_P}$; that is, the usual anti-canonical embedding. Combining this with Theorem~\ref{thm:embedding} gives the following proposition:

\begin{pro}
Given a Fano polygon $P$ isomorphic to the polyhedron of sections of a nef divisor on $\PP^2$ or $\PP^1\times \PP^1$, or isomorphic to the cone over the polyhedron of sections of a nef divisor on $\PP^1$, then $X_P$ is anti-canonically embedded as complete intersection in a weighted projective space.
\end{pro}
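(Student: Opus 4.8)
The plan is to recognise each of the three hypotheses as one of the scaffolding types listed in Remark~\ref{rem:types_of_scaffolding} and then feed the resulting scaffolding into Theorem~\ref{thm:embedding}. In the first case I would set $Z = \PP^2$ with $N_U = \{0\}$; in the second $Z = \PP^1 \times \PP^1$ with $N_U = \{0\}$; in the third $Z = \PP^1$ with $N_U \cong \ZZ$. In each situation the hypothesis supplies a nef divisor $D$ on $Z$ whose polyhedron of sections $P_D$ is isomorphic to $P$ (in the first two cases) or to the base segment of the cone $P$ (in the third case). I would then take the scaffolding $S$ consisting of the single non-uneliminated strut $(D,\chi)$ together with the $u = \dim N_U$ uneliminated struts $(\cO, e_i)$, with $\chi = 0$ in the first two cases and $\chi \in N_U$ chosen so that $\conv{P_D + \chi, e_1}$ recovers $P$ in the cone case. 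In every case this is an instance of the anti-canonical scaffolding, we have $|S| = u + 1$, and hence $r = |S| - u = 1$.

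Next I would apply Theorem~\ref{thm:embedding}. Since $r = 1$ it yields $\Cl(Y_S) \cong \ZZ$, and the projective toric variety determined by this weight data together with the positive stability condition $\omega$ (the sum of the first $|S|$ columns) is precisely the weighted projective space $\PP(1,a_1,\ldots,a_N)$ described after the definition of the anti-canonical scaffolding; the weight-one coordinate records that the embedding is the one defined by $-K_{X_P}$, so it is the anti-canonical embedding. Because $Z$ is a product of $k$ projective spaces, with $k = 1$ for $\PP^2$ and $\PP^1$ and $k = 2$ for $\PP^1 \times \PP^1$, the same theorem presents $X_P$ as the intersection of $k$ divisors, each cut out by a single equation in Cox coordinates. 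A dimension count then closes the argument: the fan of $Y_S$ lies in $(N_U \oplus \Div_{T_{\bar M}}(Z)) \otimes \QQ$, so $\dim Y_S = \dim N_U + z$, where $z$ is the number of rays of $Z$. Checking the three cases ($\dim Y_S = 3,4,3$ respectively) against $\dim X_P = 2$ shows that the codimension of $X_P$ in $Y_S$ equals $k$, the number of defining equations, so the embedding is a complete intersection.

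The step I expect to be the main obstacle is verifying that the ambient $Y_S$ is genuinely a well-formed weighted projective space and that the $k$ equations really cut out $X_P$ in the expected codimension, so that the ``intersection of $k$ divisors'' of Theorem~\ref{thm:embedding} is an honest complete intersection rather than a subscheme of smaller codimension. When $k = 1$, as for $Z = \PP^2$ and $Z = \PP^1$, this is immediate, since a single equation always defines a hypersurface. For $Z = \PP^1 \times \PP^1$, where $k = 2$, one must confirm that the two hypersurfaces meet properly, that is, in codimension two; this follows once one knows $\dim X_P = 2$ and $\dim Y_S = 4$, so that the surface $X_P$, being the common zero of exactly two equations, is necessarily a complete intersection. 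Recognising $Y_S$ as $\PP(1,a_1,\ldots,a_N)$ rather than a fake weighted projective space is the remaining point to record, and is guaranteed by the structure of the anti-canonical scaffolding and its weight-one coordinate.
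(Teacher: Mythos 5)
Your proposal follows the paper's own route exactly: the paper derives this proposition by combining Theorem~\ref{thm:embedding} with the preceding observation that the anti-canonical-type scaffolding with shape $\PP^2$, $\PP^1\times\PP^1$, or $\PP^1$ has $r = |S| - u = 1$, so that $\Cl(Y_S)\cong\ZZ$, the ambient $Y_S$ is the weighted projective space $\PP(1,a_1,\ldots,a_N)$, and $X_P$ is cut out by $k$ equations, with $k$ the number of projective-space factors of $Z$. Your explicit dimension count ($\dim Y_S = \dim N_U + z$, giving codimension equal to the number of defining equations) is precisely the case-by-case verification the paper flags after Theorem~\ref{thm:embedding} as needed when $Y_S$ is singular, so your argument is correct and essentially identical to the paper's, if slightly more detailed on that point.
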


\begin{rem}
Note that any low codimension model obtained via the anti-canonical scaffolding of a polygon can also be obtained by studying the Hilbert series of the corresponding toric variety; by using the anti-canonical scaffolding we only obtain models already accessible by well known methods. Several examples of such models appear in~\S\ref{sec:Hilbert_Series}.
\end{rem}


\section{Low codimension constructions}
\label{sec:low_codim_models}

\subsection{Case $l < k+2$}
\label{sec:upto_k_plus_2}
Every surface $X^{(l)}_k$ may be exhibited as a hypersurface in a toric variety. Let $P^l_k$ denote the Fano polygon obtained as the convex hull of the points
\[
\big\{ (1,0),(0,-1),(-1,k-l),(-1,k) \big\}.
\]
Consider a scaffolding of the polygon $P_k^l$ with shape $\PP^1$ consisting of three struts:
\begin{enumerate}
\item the single point $\{(1,0)\}$;
\item the segment $[(0,-1),(0,0)]$; and
\item the segment $[(-1,k-l),(-1,k)]$.
\end{enumerate}
The polygon $P^2_4$, together with its prescribed scaffolding, is shown in Figure~\ref{fig:ex_scaff}.
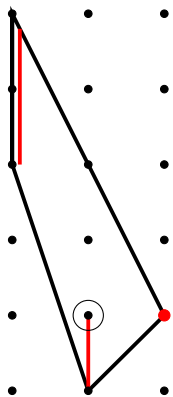
\begin{figure}[htpb]
  \centering
  \begin{tikzpicture}[transform shape]
    \begin{scope}
      \clip (-1.3,-1.3) rectangle (1.3cm,4.3cm); 
      \draw[line width=0.5mm, black] (-1,4) -- (-1,2) -- (0,-1) -- (1,0) -- (-1,4) -- (-1,2); 
      \draw[line width=0.5mm, red] (-0.9,2) -- (-0.9,3.8); 
      \draw[line width=0.5mm, red] (0,-1) -- (0,0); 
      \foreach \x in {-7,-6,...,7}{                           
        \foreach \y in {-7,-6,...,7}{                       
          \node[draw,shape = circle,inner sep=1pt,fill] at (\x,\y) {}; 
        }
      }
      \node[draw,shape = circle,inner sep=4pt] at (0,0) {}; 
      \node[draw,shape = circle,inner sep=1.5pt,fill,red] at (1,0) {};
    \end{scope}
  \end{tikzpicture}
\caption{The scaffolding of $P^{2}_4$.}
\label{fig:ex_scaff}
\end{figure}
The weight matrix obtained via Laurent inversion from this scaffolding is:
\[
  \begin{array}{ccccc}
  y_1 & y_2 & x_1 & x_2 & x_3 \\ \midrule
    1 & 0 & 0 & 1 & 0  \\
    0 & 1 & 1 & l-k & k
  \end{array}
\]
By Theorem~\ref{thm:embedding} there is a (codimension one) embedding of the toric variety $X_{P^l_k}$ into the toric variety $Y^l_k$ defined by this matrix of weight data and the stability condition $\omega = (1,2)$.
\begin{lem}
The toric variety $Y^l_k$ is isomorphic to the rational scroll $\PP_{\PP(1,1,k)}(\cO \oplus \cO(k-l))$.
\end{lem}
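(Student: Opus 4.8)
The plan is to identify the toric variety $Y^l_k$ directly from its GIT data and recognise it as the total space of a rank-two split bundle over $\PP(1,1,k)$. The weight matrix has two rows, so $\Cl(Y^l_k) \cong \ZZ^2$, matching the Picard rank of a $\PP^1$-bundle over $\PP(1,1,k)$. First I would read off the irrelevant ideal determined by the stability condition $\omega = (1,2)$: one must check which coordinate subsets may simultaneously vanish. I expect the two columns $y_1, y_2$ (associated to the fibre $\PP^1$ of the shape $Z$) to form one ``fibre'' stratum and the three columns $x_1, x_2, x_3$ to form the ``base'' stratum corresponding to $\PP(1,1,k)$; the stability condition should forbid $y_1 = y_2 = 0$ and forbid $x_1 = x_2 = x_3 = 0$ simultaneously, giving precisely the Cox-ring presentation of a projectivised bundle.

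The key computational step is to match the grading. The base $\PP(1,1,k)$ arises as the quotient of $\Spec \bk[x_1,x_2,x_3]$ by the $\Gm$-action with weights $(1,1,k)$, which is exactly the second row of $\cM$ restricted to the $x$-columns, namely $(1, l-k, k)$ --- here I would note that after the $\Gm$-action on the fibre coordinates is accounted for, the effective weights of $x_1, x_2, x_3$ on the base become $(1,1,k)$, so the base is $\PP(1,1,k)$ as required. For a split scroll $\PP_{\PP(1,1,k)}(\cO \oplus \cO(a))$ the Cox presentation has fibre coordinates $y_1, y_2$ whose degrees differ, in the $\Pic(\PP(1,1,k)) = \ZZ$ factor, exactly by the twist $a$. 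I would extract this twist from the first row of $\cM$ together with the recorded base-degrees and show it equals $k - l$, matching $\cO \oplus \cO(k-l)$.

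Concretely, the cleanest route is to compare fans rather than Cox data: after choosing the bases given by Algorithm~\ref{alg:laurent_inversion}, the fan of $Y_S$ lives in $(N_U \oplus \Div_{T_{\bar M}}(Z)) \otimes \QQ$, and I would write down its rays from the columns of $\cM$ and verify that the projection onto the base lattice realises $Y^l_k$ as a toric fibre bundle whose fan is the standard fan of a projectivised split bundle over the fan of $\PP(1,1,k)$; the fibre direction contributes two rays differing by the twisting vector, from which the twist $\cO(k-l)$ can be read off. The standard description of the fan of $\PP_{\PP(1,1,k)}(\cO \oplus \cO(k-l))$ (see Cox--Little--Schenck~\cite{Cox--Little--Schenck}) then gives an isomorphism of fans.

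The main obstacle I anticipate is bookkeeping rather than anything conceptual: one must be careful that the weighted-projective base $\PP(1,1,k)$ introduces the weight $k$ in a way compatible with the fibrewise twist, so that the twist is genuinely $k-l$ and not some sign- or offset-shifted variant. In particular I would double-check the normalisation of the bundle (the choice $\cO \oplus \cO(k-l)$ versus the projectively equivalent $\cO(l-k) \oplus \cO$) and confirm the stability condition $\omega = (1,2)$ selects the intended geometric quotient, so that no extra torus-fixed locus is deleted or retained. Once the fan matches, the isomorphism of toric varieties is immediate.
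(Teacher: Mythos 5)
The paper actually states this lemma without proof, treating it as a routine reading of the GIT data, so the benchmark is exactly the computation you sketch. Your overall strategy is the right one (determine the irrelevant ideal from $\omega=(1,2)$, split the Cox coordinates into base and fibre variables, read off the twist, or equivalently compare fans), and the normalisation worry you raise at the end is harmless: since $\PP(E)\cong\PP(E\otimes L)$, the descriptions $\cO\oplus\cO(k-l)$ and $\cO\oplus\cO(l-k)$ define the same scroll. However, the concrete coordinate grouping you commit to is wrong, and the sentence you insert to patch it would not survive being made precise.

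The weights are $y_1\mapsto(1,0)$, $y_2\mapsto(0,1)$, $x_1\mapsto(0,1)$, $x_2\mapsto(1,l-k)$, $x_3\mapsto(0,k)$, and in this subsection $l\leq k+1$, so $l-k\leq 1$. A point is semistable for $\omega=(1,2)$ if and only if the cone spanned by the weights of its nonvanishing coordinates contains $(1,2)$, and since every weight $(1,c)$ occurring has $c\leq 1<2$, this happens precisely when at least one of $y_1,x_2$ and at least one of $y_2,x_1,x_3$ is nonzero. Hence the unstable locus is $V(y_1,x_2)\cup V(y_2,x_1,x_3)$, \emph{not} $V(y_1,y_2)\cup V(x_1,x_2,x_3)$ as you predict: for instance a point with $y_1=y_2=0$ but $x_1x_2\neq 0$ is semistable, because $(1,2)$ lies in the cone spanned by $(0,1)$ and $(1,l-k)$; likewise $V(x_1,x_2,x_3)$ contains semistable points (take $y_1y_2\neq0$). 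The correct partition is therefore base coordinates $y_2,x_1,x_3$, whose second-row weights are literally $(1,1,k)$ --- no ``effective weight'' correction is needed or available --- and fibre coordinates $y_1,x_2$ of degrees $(1,0)$ and $(1,l-k)$, exhibiting $Y^l_k\cong\PP_{\PP(1,1,k)}(\cO\oplus\cO(l-k))\cong\PP_{\PP(1,1,k)}(\cO\oplus\cO(k-l))$. Your fallback claim that ``after the $\Gm$-action on the fibre coordinates is accounted for'' the $x$-weights $(1,l-k,k)$ become $(1,1,k)$ is not a real step: no change of basis of the grading lattice compatible with your partition accomplishes this, and the mixed column of $x_2$ is precisely the signal that $x_2$ is a fibre coordinate carrying the twist. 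Once the partition is corrected the rest of your plan, including the fan comparison, goes through; as a sanity check, the defining equation $y_1y_2^l=x_2x_3$ of $X_{P^l_k}$ is homogeneous of degree $(1,l)$ exactly under this reading.
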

The toric variety $X_{P^l_k}$ is a hypersurface given by the vanishing of $y_1y^l_2 = x_2x_3$, a section of $\cO(1,l)$ on $Y^l_k$. We now show that a general section of $\cO(1,l)$ is the blow-up of $\PP(1,1,k)$ in $l$ points.
\begin{pro}
Let $X$ be the vanishing locus of a general section of $\cO(1,l)$ on $Y^l_k$. The projection $\pi \colon Y^l_k \rightarrow \PP(1,1,k)$ maps $X$ onto $\PP(1,1,k)$ and contracts $l$ disjoint rational curves.
\end{pro}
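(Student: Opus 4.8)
The plan is to analyse the restriction of the bundle projection $\pi$ fibrewise. First I would fix the identification of $Y^l_k$ with $\PP_{\PP(1,1,k)}(\cO\oplus\cO(k-l))$ supplied by the preceding lemma, reading off from the weight matrix that the coordinates $y_2,x_1,x_3$ (of second-degree $1,1,k$) are pullbacks of the homogeneous coordinates on the base $\PP(1,1,k)$, while $y_1$ and $x_2$ (the two coordinates of first-degree $1$) are the relative coordinates on the $\PP^1$-fibres. A monomial of degree $(1,l)$ must contain exactly one of $y_1$ or $x_2$, so a general section of $\cO(1,l)$ can be written uniquely as
\[
f=y_1\,A(y_2,x_1,x_3)+x_2\,B(y_2,x_1,x_3),
\]
where $A$ is a section of $\cO_{\PP(1,1,k)}(l)$ and $B$ a section of $\cO_{\PP(1,1,k)}(k)$ (the toric member $y_1y_2^l-x_2x_3$ being the case $A=y_2^l$, $B=-x_3$).

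Next I would restrict $\pi$ to $X=\{f=0\}$. Over a base point $p=[y_2:x_1:x_3]$ the fibre $\pi^{-1}(p)\cong\PP^1$ is cut by the single linear equation $y_1A(p)+x_2B(p)=0$ in the relative coordinates $[y_1:x_2]$. Whenever $(A(p),B(p))\neq(0,0)$ this has a unique solution, so $\pi|_X$ is one-to-one there; since $\PP(1,1,k)$ is irreducible and $X$ is irreducible for general $f$, the map $\pi|_X$ is birational and surjective, with rational inverse $p\mapsto[-B(p):A(p)]$. It fails to be finite exactly over the common zero locus $\{A=0\}\cap\{B=0\}$: there $f$ vanishes identically on the fibre, so the whole $\PP^1=\pi^{-1}(p)$ lies in $X$ and is contracted by $\pi$. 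Over points where precisely one of $A,B$ vanishes the fibre still meets $X$ in a single point ($[1:0]$ or $[0:1]$), so these contracted fibres are the only positive-dimensional ones.

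It then remains to count $\{A=0\}\cap\{B=0\}$. On $\PP(1,1,k)$ one has $\cO(1)^2=1/k$, hence $\cO(l)\cdot\cO(k)=lk/k=l$, so for general $A,B$ the intersection is a zero-dimensional scheme of length $l$. Since $B$ has degree $k$ and the monomial $x_3$ occurs in it, the system $|\cO(k)|$ is base-point free and a general $\{B=0\}$ avoids the singular point $[0:0:1]$ of $\PP(1,1,k)$, forcing the $l$ intersection points into the smooth locus, where a Bertini-type argument makes them distinct and reduced. Their fibres are $l$ disjoint copies of $\PP^1$, all contracted by $\pi|_X$, which is an isomorphism elsewhere; this is exactly the assertion, and identifies $X$ with the blow-up of $\PP(1,1,k)$ in $l$ points.

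The routine part is the fibrewise linear algebra of the second paragraph; the delicate point is the generality argument of the third. Because $\PP(1,1,k)$ is singular and $|\cO(l)|$ can have a base point at $[0:0:1]$ when $l<k$, Bertini's theorem does not apply directly on all of $\PP(1,1,k)$. The care required is to run the transversality and reducedness statement on the smooth locus $\PP(1,1,k)\setminus\{[0:0:1]\}$, using base-point freeness of $|\cO(k)|$ (equivalently, the presence of $x_3$ in $B$) to guarantee that the base locus of $|\cO(l)|$ is disjoint from $\{B=0\}$. This is what ensures the $l$ points are genuinely distinct, reduced, and supported in the smooth locus, and hence that the contracted fibres are disjoint and $X$ is the blow-up of $\PP(1,1,k)$ in $l$ general smooth points.
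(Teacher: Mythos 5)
Your proof is correct, and its skeleton matches the paper's: both write a general section as $f = y_1A_l + x_2B_k$ with $A_l,B_k$ pulled back from $\PP(1,1,k)$, observe that $\pi|_X$ is an isomorphism away from $\{A_l=B_k=0\}$, and identify the contracted curves as the full fibres over the common zeros. Where you genuinely diverge is in the counting step. The paper counts the points of $\{A_l = B_k = 0\}$ by an explicit case analysis on $l$: for $l<k$ the curve $\{A_l=0\}$ is a union of $l$ rulings of the cone (since $x_3$ cannot appear in $A_l$) each meeting the section $\{B_k=0\}$ once; for $l=k$ it is the self-intersection of $\{x_3=0\}$ in $\PP(1,1,k)\times\PP^1$; and for $l=k+1$ it eliminates $x_3$ by hand to reduce to $f_1h_k+h_{k+1}=0$. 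You instead compute uniformly with $\QQ$-intersection theory, $\cO(l)\cdot\cO(k) = lk\cdot\tfrac{1}{k} = l$, and then justify that the length-$l$ scheme is $l$ distinct reduced points in the smooth locus: base-point freeness of $|\cO(k)|$ (the presence of $x_3$) keeps $\{B_k=0\}$ off the vertex, so the base point of $|\cO(l)|$ at $[0:0:1]$ when $l<k$ causes no trouble and Bertini applies on the smooth locus. Your route buys uniformity in $l$ and makes explicit the reducedness and disjointness that the paper leaves largely implicit (the paper's phrase ``meet in precisely $l$ points'' silently assumes transversality for general sections); the paper's route buys concreteness, exhibiting the contracted configuration directly in coordinates in each case, which is in keeping with its explicit treatment of $\PP(1,1,k)$ as a cone. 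Both are complete proofs of the stated proposition.
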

\begin{proof}
The equation defining $X$ has the general form
\[
y_1f_{l}(y_2,x_1,x_3) + x_2g_{k}(y_2,x_1,x_3) = 0,
\]
where $f_l$,~$g_k$ are homogeneous polynomials of bi-degree $(0 , l )$ and $(0 , k)$ respectively. Therefore $X$ is a section of the projection $\pi$ except where $f_l = g_k = 0$ in $\PP_{(y_{2}:x_{1}:x_{3})}(1,1,k)$. When these two polynomials vanish the fibre of $\pi|_X$ is a $\PP^1$ contracted to a point by $\pi$. Therefore we only need to count the number of intersection points of the zero locus of $f_{l}$ and $g_{k}$.

First assume that $l < k$. Then no term of $f_l$ contains the variable $x_3$ and the vanishing locus is a collection of $l$ fibres of the projection $\PP(1,1,k) \rightarrow \PP^1$ presenting $\PP(1,1,k)$ as the cone over a rational curve of degree $k$. The vanishing locus of $g_k$ is a section of the standard projection $\PP(1,1,k) \dashrightarrow \PP^1$ and thus the two curves meet in precisely $l$ points.

Next consider the case $l=k$. The toric ambient space is $Y^l_k \cong \PP(1,1,k)\times \PP^1$. The number of points in the intersection $f_l = g_k$ is the self-intersection number of the toric divisor $x_3=0$ in $\PP(1,1,k)$, that is, $l$.

Finally consider the case $l = k+1$. As before the curve $\{g_k = 0\}$ is a section of the projection of $\PP(1,1,k)$ to $\PP^1$. The polynomial $f_{k+1} = 0$ can be written as $f_1(x_1,y_2)x_3 + h_{k+1}(x_1,y_2)$, and writing $g_k = x_3 - h_k(x_1,y_2)$, eliminate $x_3$ and solve $f_1h_k + h_{k+1} = 0$. Any solution gives a point of intersection, and thus there are $k+1=l$ such points of intersection.
\end{proof}

We also need to consider the exceptional case $B^{(k)}_k$. Consider the polygon $P_k$ defined by taking the convex hull of of the points
\[
\big\{ (1,0),(-1,-1),(-1,k) \big\}.
\]
Consider a scaffolding of the polygon $P_k$ with shape $\PP^1$ consisting of two struts:
\begin{enumerate}
\item the single point $\{(1,0)\}$; and
\item the segment $[(-1,-1),(-1,k)]$.
\end{enumerate}
Applying Laurent inversion to this scaffolding of $P_k$ obtain the toric surface $X_{P_k}$ embedded in $\PP(1,1,1,k)$ with co-ordinates $x_1,x_2,x_3,y$ via the homogeneous equation
\[
x^{k+1}_1 - x_3y = 0,
\]
that is, as a section of $\cO(k+1)$. Note that in the case $k=1$ this reproduces the Segre embedding $\PP^1\times \PP^1 \hookrightarrow \PP^3$ cut out via a section of the line bundle $\cO(2)$.

\begin{pro}
A general section of $\cO(k+1)$ on $\PP(1,1,1,k)$ is the surface $B^{(k)}_k$.
\end{pro}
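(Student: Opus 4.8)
The plan is to exhibit the general section $X = \{ F = 0 \} \subset \PP(1,1,1,k)$ as the birational image of a blow-up of $\PP(1,1,k)$, matching it against the explicit construction in Definition~\ref{dfn:cascade_2}. First I would observe that a general element $F$ of $\cO(k+1)$ can be written as $F = y \cdot \ell(x_1,x_2,x_3) + G_{k+1}(x_1,x_2,x_3)$, where $y$ is the coordinate of weight $k$, $\ell$ is a section of $\cO(1)$ (so linear in the weight-one variables $x_1,x_2,x_3$, since $y$ has weight $k$ and $yx_i$ has the right weight $k+1$), and $G_{k+1}$ is a form of degree $k+1$ in $x_1,x_2,x_3$. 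The projection $\pi \colon \PP(1,1,1,k) \dashrightarrow \PP^2 = \PP(1,1,1)$ that forgets $y$ realises $X$ as the graph of the rational map $[x_1:x_2:x_3]\mapsto y = -G_{k+1}/\ell$, away from the locus $\ell = 0$. Thus I would analyse this projection: on the open set $\ell \neq 0$ the map $\pi|_X$ is an isomorphism, so the interesting geometry is concentrated over the line $L = \{\ell = 0\} \subset \PP^2$.

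The key step is to identify the fibres of $\pi|_X$ over the points of $L$. Over a general point of $L$, the equation $F = 0$ forces $G_{k+1} = 0$ as well, so there is no solution in $y$ unless $G_{k+1}$ also vanishes, and over the $k+1$ points of $L \cap \{G_{k+1} = 0\}$ the fibre becomes the entire $y$-line (a $\PP^1$ in the ambient space). Hence $\pi|_X$ contracts these $k+1$ rational curves and is an isomorphism elsewhere; equivalently, reading the correspondence backwards, $X$ is the blow-up of $\PP^2$ in the $k+1$ intersection points of $L$ with the degree-$(k+1)$ curve $\{G_{k+1}=0\}$, with the proper transform of $L$ contracted. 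This is precisely the pattern of Definition~\ref{dfn:cascade_2}, once I relate it to $\PP(1,1,k)$ rather than $\PP^2$: blowing up $k+1$ points on the curve $C \in |\cO(k)|$ and contracting the strict transform $C'$ of $C$, which has self-intersection $C^2 - (k+1) = k - (k+1) = -1$ after the blow-ups. I would verify that the $-1$-curve contracted to form $B^{(k)}_k$ corresponds under this birational dictionary to the curve contracted by $\pi|_X$, and that the resulting single $\frac{1}{k}(1,1)$ singularity of $X$ (coming from the weight-$k$ coordinate point $[0:0:0:1]$) matches the singularity of $B^{(k)}_k$.

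The main obstacle I anticipate is the bookkeeping that reconciles the two descriptions: Definition~\ref{dfn:cascade_2} builds $B^{(k)}_k$ from $\PP(1,1,k)$ by blowing up $k+1$ points on a curve in $|\cO(k)|$ and contracting the strict transform $C'$, whereas the weighted-projective model most naturally presents $X$ via a projection to $\PP^2=\PP(1,1,1)$. I would therefore need to make the identification of these two birational models careful, checking that the self-intersection numbers, the location of the quotient singularity, and the anti-canonical degree all agree. In particular I expect the cleanest route is to compute $(-K_X)^2$ from the weighted complete intersection data and confirm it equals the degree predicted for $B^{(k)}_k$ (via $(-K_{B^{(k)}_k})^2$ obtained from the contraction of a $(-1)$-curve on $X^{(k+1)}_k$), and to confirm the Fano index is $2$ as recorded in the table of~\S\ref{sec:cascadesofsurfaces}; matching these numerical invariants, together with the birational description above, pins down $X \cong B^{(k)}_k$.
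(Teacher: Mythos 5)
Your proposal is correct in substance but takes a genuinely different route from the paper. The paper never works intrinsically on $\PP(1,1,1,k)$: it uses the GIT presentation of the ambient threefold $Y^{(k+1)}_k$ from Laurent inversion, which is simultaneously a weighted blow-up of $\PP(1,1,1,k)$ (via $\pi_2$) and a $\PP^1$-bundle over $\PP(1,1,k)$ (via $\pi_1$). The hypersurface $X^{(k+1)}_k = \{y_1f_{k+1} - x_2g_k = 0\}$ meets the exceptional divisor $\{y_1 = 0\}$ in the curve $C = \{g_k = 0\}$, whose $\pi_1$-image is exactly the $|\cO_{\PP(1,1,k)}(k)|$-curve through the $k+1$ blown-up points; so $\pi_2$ contracts precisely the curve specified in Definition~\ref{dfn:cascade_2}, and the pushforward lands in $|\cO(k+1)|$. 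This matches the definition of $B^{(k)}_k$ on the nose, with no reconciliation step needed. You instead write the general section as $F = y\,\ell + G_{k+1}$ (valid for $k \geq 2$, since $y^2$ has weight $2k > k+1$) and project to $\PP^2$, obtaining the model ``blow up $k+1$ collinear points, contract the strict transform of the line'' --- an alternate description of $B^{(k)}_k$ analogous to the paper's own Lemmas~\ref{lem:alternate_blowup} and~\ref{lem:alternate_blowup_2} for $X^{(k+2)}_k$ and $X^{(k+3)}_k$, except that the paper never states or proves this particular lemma, so your approach genuinely requires it. It can be proved by the same trick as Lemma~\ref{lem:alternate_blowup_2}: blow up one further general point $b$ and contract the strict transforms of the $k+1$ lines joining $b$ to the $q_i$, recovering $\FF_k$ and hence the $\PP(1,1,k)$ description.

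Two points of bookkeeping deserve care. First, every monomial of $F$ involves some $x_i$, so $X$ passes through the vertex $[0{:}0{:}0{:}1]$, and the $k+1$ contracted fibres all pass through this $\frac{1}{k}(1,1)$ point; the clean statement is on the minimal resolution $\widehat{X}$, where the exceptional $(-k)$-curve maps isomorphically onto $L$ and the $k+1$ strict transforms of the fibres are disjoint floating $(-1)$-curves meeting it --- your phrase ``$\pi|_X$ \ldots is an isomorphism elsewhere'' glosses over the indeterminacy at the vertex. Second, your fallback via numerical invariants is weaker than you suggest: $(-K_X)^2 = (k+1)\cdot 2^2/k = 4 + 4/k$, which coincides with the degree of $X^{(k)}_k$, so the anti-canonical degree alone does \emph{not} pin down the family; the Fano index $2$ is essential to separate $B^{(k)}_k$ from $X^{(k)}_k$ (indeed the paper itself needs a finer invariant, $\pi_1$ of the anti-canonical complement, to distinguish these two families in \S\ref{sec:conjecture_A}), and this route additionally leans on the classification of Theorem~\ref{thm:cascade}, which is heavier machinery than the direct birational identification you sketch first. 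With the reconciliation lemma proved as above, your direct argument closes without invoking the classification and is a perfectly good, self-contained alternative to the paper's proof.
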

\begin{proof}

The GIT presentation of $Y_{k}^{k+1}$ immediately shows that this variety is a weighted blow-up of $\PP(1,1,1,k)$ with centre $\{ y_2=x_1=x_3=0 \}$, with co-ordinates inherited from those on $Y_{k}^{(k+1)}$. Thus there are a pair of projections:
\[
\xymatrix{
Y^{(k+1)}_k \ar^{\pi_1}[d] \ar^{\pi_2}[r] & \PP(1,1,1,k) \\
\PP(1,1,k) & 
}
\]
Recall that the hypersurface $X_{k}^{(k+1)} \subset Y_{k}^{(k+1)}$ is given by the vanishing of a general section
\[
y_1f_{k+1}(y_2,x_1,x_3) - x_2g_k(y_2,x_1,x_3)= 0.
\]
This intersects the exceptional divisor $\{y_1=0\}$ in the curve $C = \{ g_k(y_2,x_1,x_3) =0 \}$ (since $x_2$ is nowhere vanishing on the exceptional divisor). The image of $X_{k}^{(k+1)}$ under $\pi_2$ is the contraction of $C$ in $X_{k}^{(k+1)}$. However the image of $C$ under $\pi_1$ is a curve in the linear system $\cO(k)$ which meets the $k+1$ points blown up by the map $\pi_1 \colon X^{(k+1)}_{k} \rightarrow \PP(1,1,k)$. Finally, observe that the push-forward of the cycle $X^{(k+1)}_k$ is a divisor in the linear system $\cO(k+1)$.
\end{proof}

Consider next those cases for which $k+2 \leq l < (k+2)^2/k$. Writing $(k+2)^2/k = k + 4 + 4/k$ there are precisely three possibilities for $l$ if $k > 3$. Consider each of these three cases in turn, noting that the behaviour of our constructions varies with the parity of $k$. Our constructions apply for all positive integers $k$, but as noted in Remark~\ref{rem:small_k}, in the cases $k=2$, and $k=4$ the general sections of the complete intersections also smooth the $\frac{1}{k}(1,1)$ singularity.

\subsection{Case $l = k+2$}
\label{sec:k_plus_2}

First consider the case $k = 2m$ for some $m \in \ZZ_{> 2}$. Consider the polygon $P^{k+2}_k$ given by the convex hull of the points
\[
\big\{ (-1,-1) , (1,-1) , (-1,m) , (1,m) \big\}.
\]
The case $m=3$ is shown in Figure~\ref{fig:Pkplus2even} equip with its anti-canonical scaffolding.
\begin{figure}[htbp]
  \centering
  \begin{tikzpicture}[transform shape]
    \begin{scope}
      \clip (-1.3,-1.3) rectangle (1.3cm,3.3cm); 
      \draw[line width=0.5mm, black] (-1,-1) -- (1,-1) -- (1,3) -- (-1,3) -- (-1,-1);
      \draw[line width=0.5mm, red] (-0.9,-0.9) -- (0.9,-0.9) -- (0.9,2.9) -- (-0.9,2.9) -- (-0.9,-0.9);
      \foreach \x in {-7,-6,...,7}{                           
        \foreach \y in {-7,-6,...,7}{                       
          \node[draw,shape = circle,inner sep=1pt,fill] at (\x,\y) {}; 
        }
      }
      \node[draw,shape = circle,inner sep=4pt] at (0,0) {}; 
    \end{scope}
  \end{tikzpicture}
\caption{The scaffolding of $P^{(8)}_6$.}
\label{fig:Pkplus2even}
\end{figure}
Following the Laurent inversion construction (or otherwise) the anti-canonical embedding maps
\[
X_{P^{k+2}_2} \hookrightarrow \PP(1,1,1,1,m).
\]
This coincides with the model suggested in~\S\ref{sec:Hilbert_Series}. In particular the image of this embedding is a codimension two complete intersection given by the vanishing of a section of the split bundle $E := \cO(2)\oplus\cO(m+1)$. In fact, one can show explicitly that the vanishing of a section of $E$ is precisely a surface $X^{(k+2)}_k$.
\begin{pro}
The minimal resolution of the vanishing of any section of $E$ on $Y^{(k+2)}_k := \PP(1,1,1,1,m)$ is the blow-up of $\PP^1\times \PP^1$ in $k+2$ points.
\end{pro}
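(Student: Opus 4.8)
The plan is to project $X$ away from the weight-$m$ coordinate, realising it as the contraction of a single curve on a blow-up of a smooth quadric. Write the homogeneous coordinates on $\PP(1,1,1,1,m)$ as $x_0,x_1,x_2,x_3$ (of weight one) and $z$ (of weight $m$). Since $m>2$ no degree-two form can involve $z$, so the $\cO(2)$-component of the section is a quadric $f_2=Q(x_0,\dots,x_3)$ in the weight-one variables alone; for a general section $Q$ is nondegenerate, so $W:=\{f_2=0\}\cong\PP^1\times\PP^1$ and the hyperplane class restricts to the $(1,1)$-class. As $m>1$, the $\cO(m+1)$-component is linear in $z$, so I may write $g_{m+1}=zL(x)+P(x)$ with $L$ linear and $P$ of degree $m+1$ in the $x_i$.

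First I would record the fibre structure of the projection $\pi\colon X\dashrightarrow W$, $[x:z]\mapsto[x]$, which is defined away from the vertex $v=[0:0:0:0:1]$. Over the open set $W\setminus C$, where $C:=\{L=0\}\cap W$, the equation $g_{m+1}=0$ solves uniquely for $z=-P/L$, so $\pi$ is an isomorphism there. The curve $C$ is a hyperplane section of $W$, hence lies in the $(1,1)$-class, and on $C$ the condition $g_{m+1}=0$ becomes $P=0$. Since $\{P=0\}\cap W$ lies in the $(m+1,m+1)$-class, the intersection number on $\PP^1\times\PP^1$ is $(1,1)\cdot(m+1,m+1)=2(m+1)=k+2$ (recall $k=2m$), so $C$ meets $\{P=0\}$ in $k+2$ points $q_1,\dots,q_{k+2}$, distinct for a general section. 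One checks that $v\in X$ (both $f_2$ and $g_{m+1}$ vanish at $x=0$), and that it is the only point where $\pi$ fails to be defined.

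To finish I would run the construction in reverse. The inverse rational map $W\dashrightarrow X$, $q\mapsto[q:-P(q)/L(q)]$, is a morphism on $W\setminus C$, sends $C$ to $v$ (as $L\to0$ forces $z\to\infty$), and has indeterminacy exactly at $q_1,\dots,q_{k+2}$. Blowing these up yields a morphism $\rho\colon\widehat{W}:=\operatorname{Bl}_{q_1,\dots,q_{k+2}}(\PP^1\times\PP^1)\to X$: each exceptional curve $E_i$ maps isomorphically onto the fibre line $\overline{\pi^{-1}(q_i)}$ through $v$, while the strict transform $C'$ of $C$ is contracted to $v$. Thus $\rho$ is an isomorphism over $X\setminus\{v\}$ and contracts only $C'$. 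Since $q_i\in C$ we have $(C')^2=C^2-(k+2)=2-(k+2)=-k$, so $C'$ is a smooth rational $-k$ curve; contracting it produces exactly a $\frac{1}{k}(1,1)$ point, and $\rho$ is therefore the minimal resolution of $X$. Hence $\widehat{W}$ is the blow-up of $\PP^1\times\PP^1$ in $k+2$ points, as claimed; this reproves Lemma~\ref{lem:alternate_blowup} from the complete-intersection side, with $C'$ in the role of the strict transform of the diagonal.

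The main obstacle is the bookkeeping at the two ends rather than the central intersection count. I would need to verify carefully that blowing up the $k+2$ points genuinely resolves the base locus of $W\dashrightarrow X$, so that $\rho$ is a morphism and no further (infinitely near) blow-ups intervene; and I would confirm the analytic type of $X$ at $v$ directly, working in the chart $\{z\neq0\}\cong\AA^4/\mu_m$ with weights $(1,1,1,1)$, where the smooth hypersurface $\{g_{m+1}=0\}$ eliminates the variable $L$ and reduces $X$ locally to the cone $\{\bar{Q}=0\}/\mu_m$, whose node combines with the $\mu_m$-action to give $\frac{1}{2m}(1,1)=\frac{1}{k}(1,1)$. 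Finally, I would read ``any'' as ``general'': a special section may degenerate $Q$ to a quadric cone (giving $\PP(1,1,2)$ in place of $\PP^1\times\PP^1$) or force the $q_i$ to collide, so I would state the genericity hypotheses explicitly at the points where they are used.
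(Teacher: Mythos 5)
Your proposal is correct and follows essentially the same route as the paper: both project away from the weight-$m$ coordinate onto the quadric $\{Q=0\}\cong\PP^1\times\PP^1$, identify the contracted curves as lying over the $(1,1)\cdot(m+1,m+1)=2(m+1)=k+2$ intersection points of a $(1,1)$-curve with a curve of class $(m+1,m+1)$, and resolve the vertex to obtain the blow-up, with the $(1,1)$-curve playing the role of the diagonal of Lemma~\ref{lem:alternate_blowup}. Your additional verifications -- the local computation exhibiting $X$ at $v$ as a node modulo $\mu_m$, hence $\frac{1}{2m}(1,1)=\frac{1}{k}(1,1)$, and the observation that ``any'' must be read as ``general'' (a rank-$3$ quadric $Q$ or colliding points $q_i$ break the conclusion) -- are sound and in fact tighten details the paper leaves implicit.
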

\begin{proof}
Let $x_i$,~$1 \leq i \leq 4$ and $y$ by the co-ordinates on $Y^{(k+2)}_k$ and consider the vanishing locus $V := \{s_2 = 0\}$ of a section of $\cO(2)$ on $Y^{(k+2)}_k$. The section $s_2$ is represented by a homogeneous polynomial with no term containing the variable $y$. Therefore $V$ is isomorphic to a cone over the Segre embedding of $\PP^1\times \PP^1$. The complement of the point $\{x_1=x_2=x_3=x_4=0\}$ in $V$ is the total space of $\cO(m,m)$ on $\PP^1\times\PP^1$.

Let $W$ be the vanishing locus of $\{s_{m+1} = 0\}$, a homogeneous polynomial of degree $m+1$. This has the general form
\[
s_{m+1} = yf_1(x_1,\ldots x_4) +  f_{m+1}(x_1,\ldots x_4).
\]
Consider the projection $X := V \cap W \dashrightarrow \PP^1\times \PP^1$ which contracts precisely those curves fibering over the points $f_1 = f_{m+1} = 0$. Sections of $\cO(a)$ on $\PP^3$, for any $a \in \mathbb{N}$ pull back to sections of $\cO(a,a)$ on $\PP^1\times\PP^1$ under the Segre embedding and thus the locus $f_1=f_{m+1}=0$ consists of precisely $2(m+1) = k+2$ points on a curve in the linear system of $\cO(1,1)$, and so up to a linear co-ordinate change, consists of $k+2$ points on the diagonal $\Delta$ of $\PP^1\times\PP^1$.

In fact this projection factors through the blow-up of $Y^{(k+2)}_k$ at the point $\{x_1 = \ldots = x_4 = 0\}$, resolving the indeterminacy of the projection and resolving the $1/k(1,1)$ singularity of the surface $X$. This therefore exhibits $k+2$ disjoint lines on the minimal resolution of $X$ and contracting these yields the surface $\PP^1\times \PP^1$. By Lemma~\ref{lem:alternate_blowup}, $X$ is the blow-up of $\PP(1,1,k)$ in $k+2$ points.
\end{proof}

Assume instead that $k = 2m-1$ for some $m \in \ZZ_{\geq 1}$. This case closely generalises the surface $dP_6$ in the case $k=1$. The case $k=3$ appears in~\cite{RS03} and has degree $10/3$. There Reid--Suzuki observe that the surface $X^{(5)}_3$ naturally embeds in codimension four. However we construct a codimension two embedding into a toric variety via Laurent inversion analogous to the embedding of $dP_6$ into the fourfold $\PP^2 \times \PP^2$. 
\medskip

The case $k=1$ is nothing other than the usual construction of $dP_6$ as a codimension two complete intersection in $\PP^2\times \PP^2$, the \emph{ancestral Tom} of Brown--Reid--Stevens~\cite{browntutorial}. Similarly there is a codimension four Segre type embedding of $Y^{(k+2)}_k$ into $\PP(1^4,m^4,k)$ (where superscripts indicate repeated weights). In the case $k=1$ there is also an embedding into the \emph{ancestral Jerry} ($\PP^1\times \PP^1 \times \PP^1$). This construction does not appear to generalise to other values of $k$.
\medskip

Consider the polygon $P^{(k+2)}_k$ given as the convex hull of the points
\[ \big\{ (0,-1), (m,-1), (m,m-1), (m-1,m), (-1,m), (-1,0) \big\}, \] 
together with the scaffolding shown in Figure~\ref{fig:Pkplus2k} with shape $\mathbb{P}^{1} \times \mathbb{P}^{1}$.
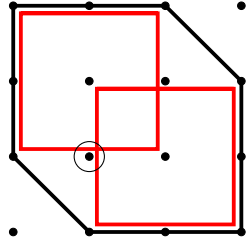
\begin{figure}[htpb]
  \centering
  \begin{tikzpicture}[transform shape]
    \begin{scope}
      \clip (-1.3,-1.3) rectangle (2.3cm,2.3cm); 
      \draw[line width=0.5mm, black] (-1,0) -- (0,-1) -- (2,-1) -- (2,1) -- (1,2) -- (-1,2) -- (-1,0); 
      \draw[line width=0.5mm, red] (-0.9,1.9) -- (0.9,1.9) -- (0.9,0.1) -- (-0.9,0.1) -- (-0.9,1.9) -- (0.9,1.9); 
      \draw[line width=0.5mm, red] (0.1,0.9) -- (1.9,0.9) -- (1.9,-0.9) -- (0.1,-0.9) -- (0.1,0.9) -- (1.9,0.9) ; 
      \foreach \x in {-7,-6,...,7}{                           
        \foreach \y in {-7,-6,...,7}{                       
          \node[draw,shape = circle,inner sep=1pt,fill] at (\x,\y) {}; 
        }
      }
      \node[draw,shape = circle,inner sep=4pt] at (0,0) {}; 
    \end{scope}
  \end{tikzpicture}
\caption{The scaffolding used to construct $X^{(k+2)}_{k}$ in the case $k=3$.}
\label{fig:Pkplus2k}
\end{figure}

This scaffolding induces a toric embedding of $X_{P^{k+2}_k}$ into a toric variety $Y^{(k+2)}_k$ defined by the weight matrix

\[
  \begin{array}{cccccc}
  x_1 & x_2 & y_1 & y_2 & z_1 & z_2 \\ \midrule
    1 & 1 & 0 & 0 & m-1 & m \\
    0 & 0 & 1 & 1 & m & m-1
  \end{array}
\]
together with stability condition $\omega = (1,1)$. The fourfold $Y^{(k+2)}_k$ determined by this data is a $\QQ$-factorial Fano variety. The surface $X_{P^{k+2}_k}$ is a codimension two complete intersection defined by the vanishing of the polynomials
\begin{align*}
x^m_1y^m_1 - x_2z_1, \quad \text{and} \quad x^m_1y^m_1 - y_2z_2.
\end{align*}
In particular $X_{P^{k+2}_k}$ admits a flat deformation to the vanishing locus $X$ of a general section of the split bundle $E := \cO(m,m)^{\oplus 2}$.

\begin{pro}
\label{pro:k_plus_2}
The minimal resolution of the vanishing of any section of $E$ on $Y^{(k+2)}_k$ is the blow-up of $\PP^1\times \PP^1$ in $k+2$ points on the diagonal $\Delta$ (the surface $S_k$ of Lemma~\ref{lem:alternate_blowup}). Moreover this resolution contracts the strict transform of the diagonal of $\PP^1\times \PP^1$.
\end{pro}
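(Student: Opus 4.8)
The plan is to exploit the projection from $Y^{(k+2)}_k$ onto its shape $\PP^1\times\PP^1$ and to mimic the preceding (even) case, replacing the distinguished cone by a direct elimination of the fibre coordinates $z_1,z_2$. First I would determine the shape of a general section of $E$. Listing the monomials of bidegree $(m,m)$ in the Cox ring of $Y^{(k+2)}_k$ shows (for $m\ge 2$) that the only ones involving $z_1$ or $z_2$ are $x_1z_1,x_2z_1,y_1z_2,y_2z_2$; hence a general section has the form $s^{(i)}=F_i(x,y)+\ell_i(x)z_1+\lambda_i(y)z_2$, where $F_i$ has bidegree $(m,m)$, $\ell_i$ is linear in $(x_1,x_2)$ and $\lambda_i$ is linear in $(y_1,y_2)$. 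The two toric defining equations are recovered by taking $F_1=F_2=x_1^my_1^m$, $\ell_1=-x_2$, $\lambda_1=0$, $\ell_2=0$, $\lambda_2=-y_2$.

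Next I would consider the projection $\pi\colon Y^{(k+2)}_k\dashrightarrow\PP^1\times\PP^1$, $((x_1:x_2),(y_1:y_2))$, the toric rational map that forgets the fibre coordinates. Over a point of $\PP^1\times\PP^1$ the equations $s^{(1)}=s^{(2)}=0$ become the linear system $\ell_iz_1+\lambda_iz_2=-F_i$ in $(z_1,z_2)$. Writing $\delta:=\ell_1\lambda_2-\ell_2\lambda_1$, a bidegree $(1,1)$ form, this system has a unique solution wherever $\delta\neq0$; thus $\pi$ restricts to a birational map $X:=\{s^{(1)}=s^{(2)}=0\}\dashrightarrow\PP^1\times\PP^1$ which is an isomorphism over $\{\delta\neq0\}$, with inverse $\psi$ given by Cramer's rule, $z_1=(F_2\lambda_1-F_1\lambda_2)/\delta$ and $z_2=(\ell_2F_1-\ell_1F_2)/\delta$.

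I would then locate the points blown up by $\psi$. The degeneracy curve $C:=\{\delta=0\}$ is a $(1,1)$-curve, hence for general coefficients a smooth rational curve, and up to an automorphism of $\PP^1\times\PP^1$ it is the diagonal $\Delta$. Clearing denominators, the indeterminacy locus of $\psi$ is cut out on $C$ by the vanishing of $G:=\lambda_2F_1-\lambda_1F_2$, a form of bidegree $(m,m+1)$ (on $C$ both numerators are proportional to $G$). The number of indeterminacy points is therefore the intersection number $(1,1)\cdot(m,m+1)=2m+1=k+2$ on $\PP^1\times\PP^1$, matching the number of points blown up in Definition~\ref{dfn:S_k_plus_2}. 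Blowing up these $k+2$ points on $C$ resolves $\psi$ and yields the smooth surface $S_k$; the induced morphism $S_k\to X$ contracts the strict transform of $C$, whose self-intersection is $C^2-(k+2)=2-(k+2)=-k$, producing the single $\frac1k(1,1)$ singularity of $X$. Hence the minimal resolution of $X$ is $S_k$ and it contracts the strict transform of the diagonal, as claimed; combined with Lemma~\ref{lem:alternate_blowup} this also identifies $X$ with $X^{(k+2)}_k$.

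The main obstacle is to make the fibrewise elimination rigorous: one must understand the boundary of the two-dimensional fibres of $\pi$ well enough to verify that $\psi$ extends to a morphism away from exactly the $k+2$ points found above, and that $C$ is contracted to a single point of $X$ rather than mapping to a curve. This is a purely toric computation on $Y^{(k+2)}_k$, parallel to the factorisation through a toric blow-up used in the even case, organised so that the resolved map is precisely the blow-up of $\PP^1\times\PP^1$ at the $k+2$ points. The remaining point requiring care is generality: for a general section the data $\ell_i,\lambda_i,F_i$ place the $k+2$ points at distinct smooth points of $C$, so the blow-up is the one of Definition~\ref{dfn:S_k_plus_2}, and since the $\frac1k(1,1)$ singularity is rigid for these $k$ no section of $E$ can be smoother there. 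The degenerate toric case, where $\delta=x_2y_2$ splits $C$ into two boundary lines, serves as a consistency check.
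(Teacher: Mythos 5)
Your proposal is correct and follows essentially the same route as the paper: both project out $z_1,z_2$ to get the rational map to $\PP^1\times\PP^1$, identify the contracted fibres via two determinantal conditions of bidegrees $(1,1)$ and $(m,m+1)$ (the paper uses the symmetric choice $(m+1,m)$), and count $(1,1)\cdot(m,m+1)=2m+1=k+2$ points on the diagonal, concluding via Lemma~\ref{lem:alternate_blowup}. Your extra details -- the Cramer's-rule inverse, the self-intersection check $C^2-(k+2)=-k$, and the boundary issue you flag -- correspond to the paper's brief verification that the indeterminacy loci $\{x_1=x_2=0\}$ and $\{y_1=y_2=0\}$ meet any section of $E$ only at the single point $x_1=x_2=y_1=y_2=0$, using instability of $\{x_1=x_2=z_2=0\}$ and $\{y_1=y_2=z_1=0\}$.
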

\begin{proof}
Any section of the split bundle $E$ is defined by the pair of equations,
\begin{align*}
z_1f_{1,0}(x_1,x_2) + z_2g_{1,0}(y_1,y_2) + f_{m,m}(x_1,x_2,y_1,y_2) = 0, \\
z_1h_{1,0}(x_1,x_2) + z_2k_{1,0}(y_1,y_2) + g_{m,m}(x_1,x_2,y_1,y_2) = 0,
\end{align*}
where subscripts of polynomials indicate degree in the homogeneous co-ordinate ring of $Y^{(k+2)}_k$. There is an obvious projection
\[
\pi_k \colon Y^{(k+2)}_k \dashrightarrow \PP^1\times\PP^1
\]
obtained by projecting out $z_1$ and $z_2$. This projection is defined away from the loci $\{x_1=x_2=0\}$ and $\{y_1=y_2 = 0\}$. These loci meet the vanishing locus of every section of $E$ at the point $x_1=x_2=y_1=y_2=0$ (since the loci $\{x_1=x_2=z_2=0\}$ and $\{y_1=y_2=z_1=0\}$ are unstable). As in the case of $k \in 2\ZZ$ the projection $\pi_k$ contracts a number of curves. These curves are defined by two conditions; first we need the matrix 
\[
\begin{pmatrix}
f_1 & g_1 \\
h_1 & k_1
\end{pmatrix}
\]
to drop rank. This condition defines an equation in $\cO(1,1)$ on $\PP^1\times\PP^1$. Second we need this locus to intersect the surface $X$. This occurs when the following matrix also drops rank
\[
\begin{pmatrix}
f_{m,m} & f_1 \\
g_{m,m} & h_1
\end{pmatrix}.
\]
The first equation determines a section of $\cO(1,1)$ which is assumed to be the diagonal $\Delta$ in $\PP^1\times\PP^1$. The second equation defines an equation in $\cO(m+1,m)$ on $\PP^1\times\PP^1$. Taking the intersection note that the fibre of $\pi_k$ over $2m+1 = k+2$ points of $\Delta$ contains an exceptional curve. Over every point away from $\Delta$, the fibre of $\pi_k$ consists of a single point.
\end{proof}

\begin{cor}
	General sections of $E$ are surfaces in the family $X^{(k+2)}_k$.
\end{cor}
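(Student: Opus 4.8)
The plan is to combine Proposition~\ref{pro:k_plus_2} with Lemma~\ref{lem:alternate_blowup}, which together identify the two surfaces via a common minimal resolution. By Proposition~\ref{pro:k_plus_2}, for a general section $s$ of the split bundle $E = \cO(m,m)^{\oplus 2}$ the minimal resolution $\widehat{X}$ of the surface $X = \{s=0\} \subset Y^{(k+2)}_k$ is the surface $S_k$ of Definition~\ref{dfn:S_k_plus_2}, namely the blow-up of $\PP^1\times\PP^1$ in $k+2$ points on the diagonal $\Delta$; moreover the resolution morphism $S_k \to X$ contracts precisely the strict transform of $\Delta$, which satisfies $\Delta^2 = -k$.

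The first step is to observe that the data furnished by Proposition~\ref{pro:k_plus_2} is exactly the data appearing in Lemma~\ref{lem:alternate_blowup}: the latter asserts that the same surface $S_k$ is the minimal resolution of $X^{(k+2)}_k$, with the resolution again contracting the strict transform of the diagonal. Since $S_k$ is smooth (being an iterated blow-up of the smooth surface $\PP^1\times\PP^1$) and the contracted curve is a smooth rational curve with self-intersection $-k$, its contraction introduces a single $\frac{1}{k}(1,1)$ cyclic quotient singularity and no other singular points. Thus $X$ is normal with exactly one $\frac{1}{k}(1,1)$ singularity.

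Next I would invoke the uniqueness of the contraction: the birational morphism collapsing the single curve $\Delta \subset S_k$ to a point is determined up to isomorphism by the curve being contracted. As both $X$ and $X^{(k+2)}_k$ arise from $S_k$ by contracting this same curve, we conclude $X \cong X^{(k+2)}_k$. Membership in the family, rather than mere isomorphism to a single surface, then follows by noting that a general section $s$ produces $k+2$ points cut out on $\Delta$ by a general member of $|\cO(m+1,m)|$ in the proof of Proposition~\ref{pro:k_plus_2}; since the restriction of sections of $\cO(m+1,m)$ to $\Delta \cong \PP^1$ surjects onto $H^0(\PP^1,\cO(k+2))$, these are $k+2$ general points, so $X$ is a general member of the family $X^{(k+2)}_k$.

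This corollary is essentially immediate given the two preceding results, so the only point requiring care --- and the main (mild) obstacle --- is the translation between generality of the section $s$ and generality of the resulting configuration on the diagonal. One must confirm that for generic $s$ the two rank-drop loci governing the contraction meet $\Delta$ in $k+2$ \emph{distinct} points in general position, so that the blow-down is a genuine del~Pezzo surface (which it is, as $k+2 < (k+2)^2/k$) lying in the family $X^{(k+2)}_k$ rather than a degenerate member.
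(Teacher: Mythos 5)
Your proposal is correct and follows exactly the paper's route: the paper's proof is a one-line application of Proposition~\ref{pro:k_plus_2} together with Lemma~\ref{lem:alternate_blowup}, contracting the strict transform of the diagonal in $S_k$. Your additional remarks --- uniqueness of the contraction of the $(-k)$-curve and the check that a general section yields $k+2$ general distinct points on $\Delta$ via surjectivity of the restriction map to $H^0(\PP^1,\cO(k+2))$ --- merely make explicit details the paper leaves implicit.
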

\begin{proof}
	Contracting the strict transform of the diagonal in $S_k$ we obtain a surface in the family $X^{(k+2)}_k$ via Lemma~\ref{lem:alternate_blowup}.
\end{proof}

\subsection{Case $l = k+3$}
\label{sec:k_plus_3}

Again consider the (easier) case of $k = 2m$ for some $m \in \ZZ_{\geq 1}$. In the case $l=k+2$ and $k \in 2\ZZ_{\geq 1}$ the anti-canonical embedding of $X^{(k+2)}_k$ is codimension two and there are explicit lines making divisorial contractions to $\PP^1\times \PP^1$. It is therefore expected that the $l=k+3$ case will be anti-canonically embedded as a hypersurface in a weighted projective space obtained by a linear projection from $X^{(k+2)}_k \subset \PP(1,1,1,1,m)$. We demonstrate this using Laurent inversion.

Consider the polygon $P^{(k+3)}_k$ with vertices
\[
\big\{ (-1,-1),(-1,m+1),(m+1,-1) \big\}.
\]
Applying Laurent inversion to $P^{(k+3)}_k$ with the anti-canonical scaffolding with shape $\PP^2$ obtain the variety $Y^{(k+3)}_k := \PP(1,1,1,m)$ with homogeneous co-ordinates $x_i$,~$1 \leq i \leq 3$ and $y$. The toric surface $X_{P^{(k+3)}_k}$ is given by the vanishing of the section $x_1^{m+2} - x_2x_3y$ of $\cO(m+2)$. The surfaces $X^{(k+2)}_k$ are obtained from these hypersurfaces by the simplest kind of unprojection, from codimension one to codimension two. Explicitly assume that the equation defining a general section $X$ of $\cO(m+2)$ in $\PP(1,1,1,m)$ has the form
\[
Ay - Bx_3 = 0,
\]
where $A$ has degree $2$ and $B$ has degree $m+1$. Introducing the unprojection variable $s$ obtain the equations
\begin{align*}
sx_3 = A \quad \text{and} \quad sy = B
\end{align*}
in $\PP(1,1,1,1,m)$ of degrees $2$ and $m+1$ respectively. In particular note that the projection from $X^{(k+2)}_k$ to $X^{(k+3)}_k$ is a blow-up of a single smooth point.

Now suppose $k=2m-1$ for an integer $m \in \ZZ_{\geq 1}$. Here our surfaces come anti-canonically embedded in codimension three, as the cases $k=1$ ($dP_5$), $k=3$ (see~\cite{RS03}) and the Hilbert series calculations in~\S\ref{sec:Hilbert_Series} suggest. It is therefore reasonable to consider the Pfaffians of a $5\times 5$ matrix. However, again following the path suggested by Laurent inversion, obtain a hypersurface embedding of $X^{(k+3)}_k$ into a toric variety. 

The embedding $X^{k+3}_k \hookrightarrow Y^{(k+3)}_k$ is the most interesting application of Laurent inversion in this paper. Let $P^{(k+3)}_k$ be the convex hull of vertices
\[
\big\{ (-1,-1),(-1,m),(m-1,m),(m,m-1),(m,-1) \big\},
\]
and cover $P^{(k+3)}_k$ by a pair of struts with shape $\PP^1\times \PP^1$ as shown in Figure~\ref{fig:Pkplus3k}.

\begin{figure}[htpb]
  \centering
  \begin{tikzpicture}[transform shape]
    \begin{scope}
      \clip (-1.3,-1.3) rectangle (2.3cm,2.3cm); 
      \draw[line width=0.5mm, black] (-1,-1) -- (2,-1) -- (2,1) -- (1,2) -- (-1,2) -- (-1,-1); 
      \draw[line width=0.5mm, red] (-0.9,1.9) -- (0.9,1.9) -- (0.9,0.1) -- (-0.9,0.1) -- (-0.9,1.9) -- (0.9,1.9); 
      \draw[line width=0.5mm, red] (-0.9,0.9) -- (1.9,0.9) -- (1.9,-0.9) -- (-0.9,-0.9) -- (-0.9,0.9) -- (1.9,0.9) ; 
      \foreach \x in {-7,-6,...,7}{                           
        \foreach \y in {-7,-6,...,7}{                       
          \node[draw,shape = circle,inner sep=1pt,fill] at (\x,\y) {}; 
        }
      }
      \node[draw,shape = circle,inner sep=4pt] at (0,0) {}; 
    \end{scope}
  \end{tikzpicture}
\caption{The scaffolding used to construct $X^{(k+3)}_{k}$ in the case $m=2$.}
\label{fig:Pkplus3k}
\end{figure}
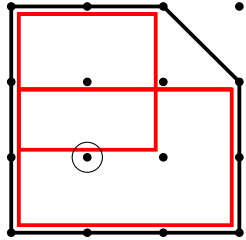

\noindent This scaffolding determines a toric variety $Y^{(k+3)}_k$ with matrix of weight data
\[
  \begin{array}{cccccc}
  x_1 & x_2 & y_1 & y_2 & z_1 & z_2 \\ \midrule
    1 & 1 & 0 & 1 & m-1 & m \\
    0 & 0 & 1 & 1 & m & m-1
  \end{array}
\]
and stability condition $\omega = (1,1)$. The surface $X_{P^{k+3}_k}$ is a codimension two complete intersection defined by the vanishing of the polynomials
\begin{align*}
x^m_1y^m_1 - x_2z_1 \quad \text{and} \quad x^{m+1}_1y^m_1 - y_2z_2.
\end{align*}

\noindent Thus $X_{P^{k+3}_k}$ admits a flat deformation to a general section of the vector bundle $E := \cO(m,m)\oplus\cO(m+1,m)$. Note that the fourfold $Y^{(k+3)}_k$ is \emph{not} $\QQ$-factorial, since $Y^{(k+3)}_k$ contains the point $\{x_1=x_2=y_1=z_1=z_2=0\}$. Also note that the toric subvariety $X_{P^{(k+3)}_k}$ meets this point, although the general section of the split bundle $E$ does not.

\begin{pro}
\label{pro:k_plus_3}
The minimal resolution of the vanishing of any section of $E$ on $Y^{(k+3)}_k$ is the blow-up of $\PP^2$ in $k+4$ points lying on a conic. Moreover the resolution contracts the strict transform of the conic.
\end{pro}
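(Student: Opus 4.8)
The plan is to follow the proof of Proposition~\ref{pro:k_plus_2} almost verbatim, with the projection to $\PP^1\times\PP^1$ replaced by a projection onto the first Hirzebruch surface $\FF_1$; the passage from $\FF_1$ to $\PP^2$ then accounts for one of the $k+4$ points. First I would record the shape of a general section of $E$. Collecting the monomials of bidegrees $(m,m)$ and $(m+1,m)$ in the Cox ring of $Y^{(k+3)}_k$, the two defining equations are linear in $z_1,z_2$:
\[
s_1 = z_1\,\ell(x_1,x_2) + z_2\, cy_1 + P, \qquad s_2 = z_1\,Q(x_1,x_2) + z_2\,R(x_1,x_2,y_1,y_2) + P',
\]
where $\ell,Q$ are a linear and a quadratic form in $x_1,x_2$, $R$ has bidegree $(1,1)$, $c$ is a constant, and $P,P'$ are the remaining terms, of bidegrees $(m,m)$ and $(m+1,m)$ and free of $z_1,z_2$. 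The asymmetry with Proposition~\ref{pro:k_plus_2}, where the $z_2$-coefficients were full linear forms, comes precisely from $y_2$ carrying bidegree $(1,1)$ rather than $(0,1)$.

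\medskip
Next I would introduce the projection $\pi_k\colon Y^{(k+3)}_k\dashrightarrow \FF_1$ eliminating $z_1,z_2$, where $\FF_1$ denotes the smooth complete toric surface with Cox coordinates $x_1,x_2,y_1,y_2$ of bidegrees $(1,0),(1,0),(0,1),(1,1)$. A brief fan computation shows this surface is indeed $\FF_1$, with $\{y_1=0\}$ its unique $(-1)$-curve $E$; write $p\colon \FF_1\to\PP^2$ for the contraction of $E$, with $H:=p^*\cO_{\PP^2}(1)$ and $q_0:=p(E)$. A general section $X$ avoids the non-$\QQ$-factorial point $\{x_1=x_2=y_1=z_1=z_2=0\}$ noted above, so $\pi_k$ is defined along $X$. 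Solving $s_1=s_2=0$ for $(z_1,z_2)$, the fibre of $\pi_k|_X$ is a single point wherever the coefficient matrix
\[
M_0 = \begin{pmatrix} \ell & cy_1 \\ Q & R \end{pmatrix}
\]
is invertible, and $\pi_k$ contracts a curve precisely where $M_0$ drops rank while the inhomogeneous system $M_0(z_1,z_2)^{\mathsf T}=-(P,P')^{\mathsf T}$ remains consistent.

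\medskip
Now $\{\det M_0 = 0\}$ is a curve $\Gamma$ of bidegree $(2,1)$; since $(2,1)=2H-E$, its image $C:=p(\Gamma)$ is a conic in $\PP^2$ passing through $q_0$. On the rank-one locus $\Gamma$, consistency forces the vanishing of a second maximal minor of the augmented matrix, of bidegree $(m+2,m)$, and intersecting on $\FF_1$ gives $(2,1)\cdot(m+2,m)=k+3$ points of $\Gamma$ over which $\pi_k$ contracts a curve, while the fibre elsewhere is a single point. Thus $\pi_k$ exhibits the minimal resolution $\widehat X$ as the blow-up of $\FF_1$ at these $k+3$ general points of $\Gamma$; the strict transform $\tilde\Gamma$ then has self-intersection $\Gamma^2-(k+3)=3-(k+3)=-k$, identifying it with the curve contracted to the $\tfrac1k(1,1)$ point of $X$. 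Composing with $p$, which blows up the further point $q_0\in C$, presents $\widehat X$ as $\PP^2$ blown up at $(k+3)+1=k+4$ points lying on the conic $C$, i.e. as the surface $S'_k$ of Definition~\ref{dfn:S_k_plus_3}, with $\tilde\Gamma$ the strict transform of $C$. By Lemma~\ref{lem:alternate_blowup_2} this is the minimal resolution of $X^{(k+3)}_k$ and it contracts the strict transform of the conic, as required.

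\medskip
The main obstacle is the bookkeeping across the two contractions $\widehat X\to\FF_1\to\PP^2$. One must verify that for a general section the $k+3$ contracted loci are reduced irreducible curves over distinct points, that these points and $q_0$ are distinct and in the expected position on $C$, and that no additional curve is contracted over $E$ or over the indeterminacy locus of $\pi_k$. The conceptual crux, however, is the correct translation of the bidegree $(2,1)$ of $\det M_0$ into ``strict transform of a conic through $q_0$'': this is what both forces the conic $C$ to contain the extra point $q_0$ and upgrades the naive intersection count $k+3$ on $\FF_1$ to the required $k+4$ points on $\PP^2$.
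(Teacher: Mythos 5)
Your proposal follows the paper's proof in all essentials: the same projection $\pi_k\colon Y^{(k+3)}_k \dashrightarrow \FF_1$, the same description of the contracted fibres via rank conditions on the coefficient matrix, and the same intersection-theoretic count on $\FF_1$. Your choice of bordered minor differs harmlessly from the paper's: you intersect $\Gamma = \{\det M_0 = 0\}$ of bidegree $(2,1)$ with the minor $\ell P' - QP$ of bidegree $(m+2,m)$, whereas the paper uses the minor of bidegree $(m+1,m+1)$; since the difference $(1,-1)$ of these classes pairs to zero with $(2,1)$ under the Gram matrix $\left(\begin{smallmatrix}0&1\\1&-1\end{smallmatrix}\right)$, both give $2m+2 = k+3$. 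You also make explicit a step the paper leaves implicit, namely translating ``$k{+}3$ points on a $(2,1)$-curve in $\FF_1$'' into ``$k{+}4$ points on a conic in $\PP^2$'' via $(2,1) = 2H - E$ and the contraction $p\colon \FF_1 \to \PP^2$, which is a genuine gain in completeness.

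The one real error is your claim that, because a general section avoids the non-$\QQ$-factorial point $\{x_1=x_2=y_1=z_1=z_2=0\}$, ``$\pi_k$ is defined along $X$.'' This is false, and it cannot be repaired by genericity: \emph{every} section of $E$ passes through the point $q = \{x_1=x_2=y_1=y_2=0\}$, since all terms of both defining equations vanish there and $q$ is stable (both $z_i \neq 0$), and $q$ lies on the indeterminacy locus $\{x_1=x_2=0\} \cup \{y_1=y_2=0\}$. In fact $q$ is precisely the $\frac{1}{k}(1,1)$ point of $X$ (locally the residual stabilizer is $\mu_{2m-1}$ acting on the slice with weights $(1,1)$), and resolving the indeterminacy of $\pi_k$ at $q$ is exactly what produces the exceptional curve $\tilde\Gamma$ mapping \emph{onto} $\Gamma$; if $\pi_k$ were a morphism at $q$, then $\tilde\Gamma$ would be contracted by $\widehat{X} \to \FF_1$ to the single point $\pi_k(q)$, contradicting your own correct identification of $\tilde\Gamma$ with the strict transform of $\Gamma$. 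The paper handles this with a short computation you omit: restricting the equations to $\{x_1=x_2=0\}$ gives (up to constants) $z_2y_1 + y_2^m = 0$ and $y_2z_2 = 0$, and since $\{x_1=x_2=y_2=z_2=0\}$ is unstable, the only intersection of $X$ with the indeterminacy locus is $q$; so the fibre analysis via $M_0$ is valid away from a single point, which the minimal resolution takes care of. Your later remark about checking ``the indeterminacy locus of $\pi_k$'' shows you sensed the issue; with that sentence corrected, your argument is the paper's.
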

\begin{proof}

Similarly to the case $l=k+2$ there is an obvious projection
\[
\pi_k \colon Y^{(k+3)}_k \dashrightarrow \FF_1
\]
onto the Hirzebruch surface $\FF_1$ with homogeneous co-ordinates $x_1,x_2,y_1$, and $y_2$. Following the method used in the proof of Proposition~\ref{pro:k_plus_2} form an expression for a general section of $E$,

\begin{align*}
z_1f_{1,0} + z_2f_{0,1} + f_{m,m} = 0 \\
z_1f_{2,0} + z_2f_{1,1} + f_{m+1,m} = 0
\end{align*}
where $f_{i,j}$ denotes a polynomial of bidegree $(i,j)$ in the homogeneous co-ordinate ring of $\FF_1$. The rational map $\pi_k$ is undefined along $\{x_1=x_2=0\}$ and along $\{y_1=y_2=0\}$. These loci meet $Y^{(k+3)}_k$ at the point $\{x_1=x_2=y_1=y_2=0\}$. Restricting the defining equations of $Y^{(k+3)}_k$ to $\{x_1=x_2=0\}$ obtain the equations
\begin{align*}
z_2y_1 + y_2^m=0 \quad \text{and} \quad y_2z_2=0.
\end{align*}
Noting that the locus $\{x_1=x_2=z_2=y_2=0\}$ is empty in $Y^{(k+3)}_k$, the equations are only satisfied when $y_1=y_2=0$. A similar calculation shows $Y^{(k+3)}_k$ meets the locus $\{y_1=y_2=0\}$ at this point. Next consider the conditions required for a given fibre of $\pi_k$ to contain a line. There is an equation with bidegree $\cO(2,1)$ on $\FF_1$ given by the vanishing of the determinant of the matrix
\[
\begin{pmatrix}
f_{1,0} & f_{0,1} \\
f_{2,0} & f_{1,1}
\end{pmatrix}.
\]
There is also an equation of bidegree $\cO(m+1,m+1)$ given by the vanishing of the determinant of the matrix
\[
\begin{pmatrix}
f_{m,m} & f_{0,1} \\
f_{m+1,m} & f_{1,1}
\end{pmatrix}.
\]
The intersection form on $\FF_1$ in the basis of $\Pic(\FF_1)$ determined by the weight matrix defining $Y^{(k+3)}_k$ has matrix 
\[
\begin{pmatrix}
0 & 1\\
1 & -1
\end{pmatrix}.
\]
Thus the intersection product $\langle (m+1,m+1), (2,1) \rangle$ is equal to $2m+2 = k+3$ and the projection $\pi_k$ contracts precisely $k+3$ curves on fibering over a section of $\cO(2,1)$.
\end{proof}

\begin{cor}
	General sections of $E$ are surfaces in the family $X^{(k+3)}_k$.
\end{cor}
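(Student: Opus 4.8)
The plan is to deduce the corollary directly from the preceding Proposition~\ref{pro:k_plus_3}, in exact parallel with the short argument given for the corresponding corollary following Proposition~\ref{pro:k_plus_2}; the only change is that Lemma~\ref{lem:alternate_blowup_2} now plays the role that Lemma~\ref{lem:alternate_blowup} played there. Write $X$ for the vanishing locus of a general section of $E = \cO(m,m) \oplus \cO(m+1,m)$ on $Y^{(k+3)}_k$, and let $\widetilde{X} \to X$ be its minimal resolution. Proposition~\ref{pro:k_plus_3} has already done the substantive geometric work: it identifies $\widetilde{X}$ with the blow-up of $\PP^2$ in $k+4$ points lying on a conic, and records that this resolution contracts the strict transform of that conic. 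The first step is therefore simply to recognise that $\widetilde{X}$ is precisely the surface $S'_k$ of Definition~\ref{dfn:S_k_plus_3}, the strict transform of the conic being the distinguished $(-k)$-curve $C$ with $C^2 = -k$.

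The second step is to invoke Lemma~\ref{lem:alternate_blowup_2}, which asserts that $S'_k$ is a minimal resolution of $X^{(k+3)}_k$ and that the resolution morphism contracts exactly the strict transform of the conic. Combining this with the previous step, the contraction $\widetilde{X} = S'_k \to X$ of the strict transform of the conic and the contraction $S'_k \to X^{(k+3)}_k$ of the same curve are the same birational morphism, contracting the same single $(-k)$-curve to a $\frac{1}{k}(1,1)$ point. Hence $X$ is isomorphic to $X^{(k+3)}_k$, so $X$ lies in the family $X^{(k+3)}_k$, as required.

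The only point that requires a moment of care — and thus the likeliest obstacle — is \emph{generality}: I must check that a general section of $E$ produces $k+4$ points on the conic that are in sufficiently general position for the resulting contraction to be a generic, rather than a degenerate, member of the family $X^{(k+3)}_k$. This should follow from the fact that the two determinantal loci appearing in the proof of Proposition~\ref{pro:k_plus_3} (a section of $\cO(2,1)$ cutting out the conic and a section of $\cO(m+1,m+1)$ meeting it) vary without constraint as the section of $E$ varies, so their $k+3$ intersection points — together with the base point forced by the indeterminacy locus of $\pi_k$ — move freely on the conic. Granting this, no separate verification beyond the two cited results is needed, and the proof is as brief as its $l=k+2$ counterpart.
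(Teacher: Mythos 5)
Your proposal is correct and follows exactly the paper's own argument: Proposition~\ref{pro:k_plus_3} identifies the minimal resolution of a general section of $E$ with the surface $S'_k$, and Lemma~\ref{lem:alternate_blowup_2} then yields a surface in the family $X^{(k+3)}_k$ by contracting the strict transform of the conic. Your closing remark on the generality of the $k+4$ points is a reasonable (and harmless) extra precaution that the paper's one-line proof leaves implicit.
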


\begin{proof}
	By Lemma~\ref{lem:alternate_blowup_2}, by contracting the strict transform of the conic obtain a surface in the family $X^{(k+3)}_k$.
\end{proof}

In the case $m=1$, this reduces to the case of $dP_5 \subset \PP^2\times \PP^1$  cut out by a section of $\cO(2,1)$. Note however that we had to add an additional column $(1,1)$ to the weight matrix, and an line bundle $\cO(1,1)$ before this construction generalises to arbitrary values of $m$.
\medskip

In~\cite{RS03} Reid--Suzuki observe that (similarly to $dP_5$) the surface $X^{(6)}_3$ embeds in codimension three via a system of Pfaffians of a $5 \times 5$ matrix. In fact such a construction works in general, and corresponds to the anti-canonical scaffolding of $P^l_k$ shown in Figure~\ref{fig:anti_canonical_cod3}. Indeed, in~\S\ref{sec:kplus4} there is a codimension two model of the surface $X^{(k+4)}_k$ and, making a suitable unprojection from this surface, it is possible to recover the surface $X^{(k+3)}_k \subset \PP(1,1,1,m,m,k)$.

\begin{figure}[htpb]
  \centering
  \begin{tikzpicture}[transform shape]
    \begin{scope}
      \clip (-1.3,-1.3) rectangle (2.3cm,2.3cm); 
      \draw[line width=0.5mm, black] (-1,-1) -- (2,-1) -- (2,1) -- (1,2) -- (-1,2) -- (-1,-1);
      \draw[line width=0.5mm, red] (-0.9,-0.9) -- (1.9,-0.9) -- (1.9,0.9) -- (0.9,1.9) -- (-0.9,1.9) -- (-0.9,-0.9);  
      \foreach \x in {-7,-6,...,7}{                           
        \foreach \y in {-7,-6,...,7}{                       
          \node[draw,shape = circle,inner sep=1pt,fill] at (\x,\y) {}; 
        }
      }
      \node[draw,shape = circle,inner sep=4pt] at (0,0) {}; 
    \end{scope}
  \end{tikzpicture}
\caption{The anti-canonical scaffolding of $P^{k+3}_{k}$ in the case $k=3$.}
\label{fig:anti_canonical_cod3}
\end{figure}
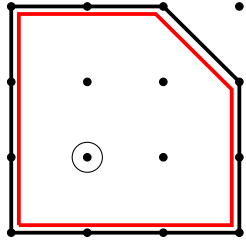

\noindent Following the argument used in~\cite{RS03} this model works, taking a matrix
\[
\begin{pmatrix}
x_1 & x_2 & b_{14} & b_{15} \\
 & x_3 & b_{24} & b_{25}  \\
 & & b_{34} & b_{35} \\
 & & & z
\end{pmatrix}
\text{ of degrees }
\begin{pmatrix}
1 & 1 & m & m \\
 & 1 & m & m  \\
 & & m & m \\
 & & & k
\end{pmatrix}
\]
where $x_i$,~$1 \leq i \leq 3$ and $z$ are the co-ordinates on $\PP(1,1,1,m,m,k)$ of degrees $1$ and $k$ respectively.

\subsection{\protect{Case $l = k+4$}}
\label{sec:kplus4}

The Hilbert series calculations in~\S\ref{sec:Hilbert_Series} suggest a model for $X^{(k+4)}_k$ in weighted projective space of codimension $\leq 2$ for all $k \in \ZZ_{\geq 1}$. These models should coincide with the model suggested by Laurent inversion applied to the anti-canonical scaffolding of a polygon associated to a toric degeneration of $X^{(k+4)}_k$. Figure~\ref{fig:Pkplus4} gives an example of polygons $P^{k+4}_k$ for each parity of $k$.

\begin{figure}[htpb]
  \centering
  \begin{tikzpicture}[scale = 0.5, transform shape]
    \begin{scope}
      \clip (-1.3,-2.3) rectangle (5.3cm,4.3cm); 
      \draw[line width=0.5mm, black] (-1,-2) -- (5,-2) -- (-1,4) -- (-1,-2);
      \draw[line width=0.5mm, red] (-0.8,-1.8) -- (4.5,-1.8) -- (-0.8,3.5) -- (-0.8,-1.8);
      \foreach \x in {-7,-6,...,7}{                           
        \foreach \y in {-7,-6,...,7}{                       
          \node[draw,shape = circle,inner sep=1pt,fill] at (\x,\y) {}; 
        }
      }
      \node[draw,shape = circle,inner sep=4pt] at (0,0) {}; 
    \end{scope}
  \end{tikzpicture}
  \qquad
  \qquad
  \begin{tikzpicture}[scale = 0.5, transform shape]
    \begin{scope}
      \clip (-1.3,-2.3) rectangle (5.3cm,4.3cm); 
      \draw[line width=0.5mm, black] (-1,-2) -- (5,-2) -- (5,4) -- (-1,4) -- (-1,-2);
      \draw[line width=0.5mm, red] (-0.8,-1.8) -- (4.8,-1.8) -- (4.8,3.8) -- (-0.8,3.8) -- (-0.8,-1.8);
      \foreach \x in {-7,-6,...,7}{                           
        \foreach \y in {-7,-6,...,7}{                       
          \node[draw,shape = circle,inner sep=1pt,fill] at (\x,\y) {}; 
        }
      }
      \node[draw,shape = circle,inner sep=4pt] at (0,1) {}; 
    \end{scope}
  \end{tikzpicture}
\caption{The anti-canonical scaffolding for $X^{(k+4)}_{k}$ in the case $k=4$ and $k=5$.}
\label{fig:Pkplus4}
\end{figure}
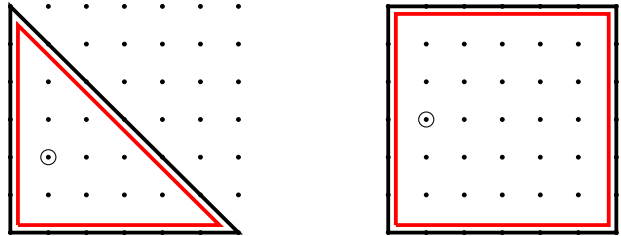

It is routine to verify that the singularities of a general section of each of these complete intersections is as expected. For $k = 2m-1$ where $m \in \ZZ_{\geq 1}$, obtain the model
\[
X_{k+1,k+1} \subset \PP(1,1,m,m,k),
\]
which, applying Theorem~\ref{thm:quasismooth_codim_2}, is a quasismooth codimension two complete intersection. From this it is easy to verify that it has the correct singularities.

Contrary to previous subsections, the case $k=2m$ for some $m \in \ZZ_{\geq 1}$ is more complicated. The model
\[
X_{k+2} \subset \PP(1,1,m,m+1),
\]
with co-ordinates~$x_1$,~$x_2$,~$y$ and $z$, is not quasismooth. Indeed, choosing a general $f \in \Gamma(\cO(k+2))$ the affine variety $\{f=0\} \subset \AA^4$ is singular along the line $L = \{x_1=x_2=z=0\}$. Setting $y=y_0$ the lowest order terms of $f$ have degree two and the singularity in the affine slice $y=y_0$ is an ordinary double point. Taking the quotient by $\GG_m$ maps $L \subset \AA^4$ to a $\frac{1}{m}(1,1,1)$ singularity. Considering how this group action acts on $\{f=0\}$, note the hypersurface in $\PP(1,1,m,m+1)$ defined by $f$ has a single singular point of type $\frac{1}{2m}(1,1)$, as expected.

\section{Classifying Root Systems}
\label{sec:root_systems}

This section is devoted to the proof of Theorem~\ref{thm:root_systems}. In particular we identify each root system of $(-2)$-classes in $\omega^\bot \subset \Pic(X^{(l)}_k)$ where $\omega$ is the canonical class of $X^{(l)}_k$. This section is a direct generalisation of~\cite[\S $25$]{Manin86}. Recall that Theorem~\ref{thm:root_systems} associates each surface $X^{(l)}_k$ to a root system as follows:

\[
\begin{array}{c||c|c|c|c|c|c}
l = (k+1)^2/k - d & 2 & \ldots & k+1 & k+2 & k+3 & k+4  \\
\hline
R & A_1 & \ldots & A_k & A_{k+1} \times A_1 & A_{k+3} & D_{k+4}
\end{array}
\]
\medskip
\begin{dfn}
\label{dfn:lattice}
Given $k \in \ZZ_{> 0}$, and $2 \leq l \leq k+4$, let $N^l_k$ be the lattice $\ZZ^{l+1}$ with standard basis $\{\ell_0 \ldots, \ell_l\}$. Fix a scalar product $(-,-)$ on $N^l_k$ by setting
\begin{enumerate}
\item $(\ell_0,\ell_0) = k$;
\item $(\ell_i,\ell_i) = -1$ for $1 \leq i \leq l$;
\item $(\ell_i,\ell_j) = 0$ for $i \neq j$.
\end{enumerate}
Fix the class 
\[
\omega = -\frac{(k+2)}{k} \ell_0 + \ell_1 + \ldots + \ell_l
\]
in $N^l_k \otimes \QQ$.
\end{dfn}

\begin{lem}
The lattice $\Pic(X^{(l)}_k)$, together with basis 
\[
\{\pi^\star\cO_{\PP(1,1,k)}(k),\cO(E_1),\ldots, \cO(E_l)\},
\]
where $\pi$ is the contraction of disjoint $(-1)$-curves $X^{(l)}_k \rightarrow \PP(1,1,k)$, and the usual intersection product, is isomorphic to $N^l_k$ as a based lattice with scalar product.
\end{lem}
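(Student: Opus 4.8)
The goal is to establish an isomorphism of based lattices with scalar product between $\Pic(X^{(l)}_k)$ and $N^l_k$. Since the claimed isomorphism sends the prescribed basis to the standard basis $\{\ell_0,\ldots,\ell_l\}$, the entire statement reduces to verifying that the intersection products of the basis elements on $X^{(l)}_k$ match the prescribed scalar products in Definition~\ref{dfn:lattice}, together with checking that the class $\omega$ corresponds to the canonical class $K_{X^{(l)}_k}$.

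\begin{proof}
Write $H := \pi^\star\cO_{\PP(1,1,k)}(k)$ and let $E_1,\ldots,E_l$ be the exceptional $(-1)$-curves of the contraction $\pi\colon X^{(l)}_k \to \PP(1,1,k)$. Since $X^{(l)}_k$ is the blow-up of $\PP(1,1,k)$ in $l$ general smooth points, these classes freely generate $\Pic(X^{(l)}_k) \cong \ZZ^{l+1}$, so the map sending $H \mapsto \ell_0$ and $\cO(E_i) \mapsto \ell_i$ is an isomorphism of based lattices; it remains to check it preserves the scalar product.

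First I would compute the self-intersection of $H$. The divisor class $\cO_{\PP(1,1,k)}(1)$ on $\PP(1,1,k)$ satisfies $\cO(1)^2 = 1/k$, since $\PP(1,1,k)$ has degree $(k+2)^2/k$ anti-canonically and $-K = \cO(k+2)$, giving $\cO(1)^2 = (k+2)^2/(k(k+2)^2) = 1/k$; hence $\cO(k)^2 = k^2 \cdot (1/k) = k$. As $\pi$ contracts curves disjoint from the generic member of $|\cO(k)|$, the pullback preserves self-intersection, so $(H,H) = k$, matching condition~(i). For the exceptional classes, the standard blow-up relations give $(E_i,E_i) = -1$, $(E_i,E_j) = 0$ for $i \neq j$, and $(H, E_i) = 0$ since $H$ is a pullback and the $E_i$ are $\pi$-exceptional; these match conditions~(ii) and~(iii).

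Finally I would identify $\omega$ with the canonical class. By the blow-up formula, $K_{X^{(l)}_k} = \pi^\star K_{\PP(1,1,k)} + \sum_{i=1}^l \cO(E_i)$, and since $K_{\PP(1,1,k)} = \cO(-(k+2)) = -\tfrac{k+2}{k}\cO(k)$ in $\Cl(\PP(1,1,k))\otimes\QQ$, pulling back gives $\pi^\star K_{\PP(1,1,k)} = -\tfrac{k+2}{k} H$. Therefore $K_{X^{(l)}_k} = -\tfrac{k+2}{k}H + \sum_{i=1}^l \cO(E_i)$, which is precisely the image of $\omega$ under our isomorphism. The main subtlety is the rational coefficient appearing because $\PP(1,1,k)$ is not Gorenstein; one must work in $\Pic \otimes \QQ$ and take care that $\pi^\star\cO(k)$ (rather than $\pi^\star\cO(1)$, which is not integral) is the correct integral generator, which is why the basis is chosen as $\cO(k)$. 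All the remaining computations are the routine intersection theory of blow-ups.
\end{proof}
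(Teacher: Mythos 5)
Your proof is correct and matches the paper's approach: the paper simply defers to Manin's Proposition~25.1, noting that $\Pic(\PP(1,1,k))$ is generated by $\cO(k)$ with $\pi^\star\cO(k)$ of self-intersection $k$, and your argument writes out exactly that computation (blow-up relations, projection formula, and the identification of $\omega$ with $K_{X^{(l)}_k}$) in full.
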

\begin{proof}
This has an identical proof to~\cite[Proposition~$25.1$]{Manin86}. Recall that the Picard group $\Pic(\PP(1,1,k))$ is generated by $\cO(k)$ and $\pi^\star\cO(k)$ has self-intersection $k$.
\end{proof}

\begin{dfn}
\label{dfn:root_system}
Let $R^l_k$ denote the set of vectors $\ell \in N^l_k$ such that
\begin{align*}
(\ell,\ell) = -2 \quad \text{and} \quad (\ell,\omega_k) = 0.
\end{align*}
\end{dfn}

\begin{pro}
\label{pro:root_systems}
The set $R^l_k \subset \omega^\bot$ is a root system. In the case that $l \geq k+2$ this is a root system in the vector space $\omega^\bot\otimes_\ZZ \RR \subset {N^l_k}\otimes_\ZZ \RR$. In the case that $2 \leq l < k+2$, $R^l_k$ spans a hyperplane in $\omega^\bot\otimes_\ZZ \RR$.
\end{pro}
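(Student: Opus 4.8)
The plan is to verify the axioms of a root system for $R^l_k$ with respect to the \emph{negated} form $-(-,-)$, and then to compute separately the dimension of the span of $R^l_k$, which is where the dichotomy between the two regimes arises.

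First I would show that $-(-,-)$ is positive definite on $\omega^\bot \otimes \RR$. The scalar product on $N^l_k$ is diagonal with entries $(k,-1,\ldots,-1)$, hence has signature $(1,l)$ with its single positive direction recorded by $\ell_0$. Since $(\omega,\omega) = (k+2)^2/k - l > 0$ (this is the anti-canonical degree, which is positive because $l \leq k+4 < (k+2)^2/k$), the class $\omega$ accounts for the positive part of the form, so the restriction of $-(-,-)$ to $\omega^\bot$ is positive definite. This yields finiteness at once: every $\ell \in R^l_k$ has $-(\ell,\ell) = 2$, so $R^l_k$ lies in the intersection of a fixed ball with the lattice $N^l_k \cap \omega^\bot$, and $0 \notin R^l_k$ since $(\ell,\ell) = -2 \neq 0$.

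The remaining structural axioms are formal once one uses $(\alpha,\alpha) = -2$. For $\alpha \in R^l_k$ the reflection is $s_\alpha(x) = x - \frac{2(x,\alpha)}{(\alpha,\alpha)}\alpha = x + (x,\alpha)\alpha$. Because the form takes integer values on $N^l_k$, one has $(x,\alpha) \in \ZZ$, so $s_\alpha$ preserves the lattice; it is an isometry fixing $\omega$ (as $(\alpha,\omega)=0$), so it preserves $\omega^\bot$ and the set of $(-2)$-classes, giving $s_\alpha(R^l_k) \subseteq R^l_k$. The Cartan integers $\frac{2(\beta,\alpha)}{(\alpha,\alpha)} = -(\beta,\alpha)$ are integral for the same reason, and $c\alpha \in R^l_k$ forces $-2c^2 = -2$, i.e.\ $c = \pm 1$. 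This establishes the axioms in the subspace spanned by $R^l_k$, following the template of Manin's treatment of the smooth case.

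Finally I would compute the span. Writing $\ell = a_0\ell_0 + \sum_{i=1}^l a_i\ell_i$, membership in $R^l_k$ is equivalent to the two Diophantine conditions $\sum_{i=1}^l a_i = -(k+2)a_0$ and $\sum_{i=1}^l a_i^2 = ka_0^2 + 2$. The roots $\ell_i - \ell_j$ for $1 \leq i < j \leq l$ always lie in $R^l_k$ and span the $(l-1)$-dimensional subspace $\{a_0 = 0,\ \sum_i a_i = 0\}$ (an $A_{l-1}$ subsystem), which has codimension one in the $l$-dimensional space $\omega^\bot \otimes \RR$. To decide whether the full space is spanned I would test for a root with $a_0 \neq 0$. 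Combining the two conditions with Cauchy--Schwarz gives $a_0^2\big((k+2)^2 - lk\big) \leq 2l$; when $l \leq k+1$ the bracket is at least $3k+4 > 2k+2 \geq 2l$, so no root has $a_0 \neq 0$ and the span is exactly the hyperplane $a_0 = 0$. Conversely, when $l \geq k+2$ the vector $-\ell_0 + \ell_1 + \cdots + \ell_{k+2}$ satisfies both conditions (the equality $\sum a_i^2 = \sum a_i = k+2$ forces all entries into $\{0,1\}$), producing a root with $a_0 \neq 0$ whose adjunction to the $A_{l-1}$ roots spans all of $\omega^\bot \otimes \RR$. The main obstacle is precisely this span computation: the sharp Cauchy--Schwarz estimate that cleanly separates the regimes $l < k+2$ and $l \geq k+2$, everything else being a routine consequence of $(\alpha,\alpha) = -2$.
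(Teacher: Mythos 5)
Your proof is correct and follows the same Manin-style template as the paper's (cf.~\cite{Manin86}): establish definiteness of the form on $\omega^\bot$, deduce the root-system axioms formally from $(\alpha,\alpha)=-2$ together with integrality of the pairing on $N^l_k$, and exhibit the $A_{l-1}$ roots $\ell_i-\ell_j$ plus one root with nonzero $\ell_0$-coefficient. You diverge in two places, both to your advantage. First, the span dichotomy: the paper asserts that for $2\le l<k+2$ the only roots are the $\ell_i-\ell_j$ (so that $R^l_k$ spans only a hyperplane) without proving it inside the proposition, the full enumeration of roots being completed only later via the index of connectedness and Bourbaki's tables~\cite{Bourbaki}; your Cauchy--Schwarz estimate $a_0^2\bigl((k+2)^2-lk\bigr)\le 2l$, combined with $(k+2)^2-lk\ge 3k+4>2k+2\ge 2l$ for $l\le k+1$, gives a direct, self-contained proof that no root has $a_0\ne 0$, which is precisely the content of the hyperplane claim. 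Second, the extra root: the paper takes $\ell_0-\ell_1-\cdots-\ell_l$, which has self-intersection $k-l$ and pairs to $l-(k+2)$ against $\omega$, hence is a root only when $l=k+2$; your vector $-\ell_0+\ell_1+\cdots+\ell_{k+2}$ works uniformly for all $l\ge k+2$ (it is the negative of the root in the second row of Figure~\ref{fig:roots} used in Proposition~\ref{pro:big_subsystem}), so here your argument repairs a small slip in the paper. Finally, your signature argument for negative definiteness of $\omega^\bot$ (signature $(1,l)$ plus $(\omega,\omega)=(k+2)^2/k-l>0$) replaces the paper's explicit computation of the quadratic form $\frac{-k}{(k+2)^2-lk}\left(\sum b_i\right)^2-\sum b_i^2$; both are valid, yours being quicker while the paper's makes the definiteness visibly quantitative.
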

\begin{proof}
The proof follows ~\cite{Manin86}. First compute the length of a vector orthogonal to $\omega$ in $N^l_k$, noting that
\[
\left(\omega,a\omega + \sum\limits_{i=1}^{l}{b_i\ell_i}\right) = \left(\frac{(k+2)^2}{k}-l\right)a - \sum\limits_{i=1}^{l}{b_i}.
\]
Thus a vector lies in $\omega^\bot$ if and only if
\[
 \left(\frac{(k+2)^2}{k}-l\right)a = \sum\limits_{i=1}^{l}{b_i}.
\]
The length of such a vector is then equal to
\[
\left(\frac{(k+2)^2}{k}\right)a^2 - 2a\sum\limits_{i=1}^{l}{b_i} - \sum\limits_{i=1}^{l}{b^2_i} = \frac{-k}{(k+2)^2-lk}\left(\sum\limits_{i=1}^{l}{b_i}\right)^2 - \sum\limits_{i=1}^{l}{b^2_i}.
\]
Recalling that $l \leq k+4$ for any $k > 3$, and that in the exceptional case $k=3$ and $l=k+5$, the intersection form is negative-definite on $\omega^\bot$ for all possible pairs $(l,k)$. Let $V$ be a finite-dimensional vector space and let $R \subset V$ be a finite set. $R$ is root system $R \subset V$ if it satisfies the following properties:
\begin{enumerate}
	\item $R$ is a spanning set of $V$;
	\item the only scalar multiples of a root $x \in R$ are $\pm x$;
	\item the set $R$ is closed under reflection;
	\item for any $x$ and $m$ in $R$, $2(x,m)/(x,x)$ is an integer.
\end{enumerate}
The vectors $\ell_i - \ell_j$, $i \neq j$, span a hyperplane in $\omega^\bot$ and all lie in $R^l_k$. In the case $l \geq k+2$ the vector $\ell_0 - \ell_1 - \ldots - \ell_l$ is also a root and jointly these vectors span $\omega^\bot$. Consequently setting $V$ to be the hyperplane spanned by the $\ell_i - \ell_j$ if $l < k+2$ and $\omega^\bot$ otherwise, it follows that $R^l_k$ spans $V$.

All elements in $R^l_k$ have length~$2$ by definition and so property (ii) is automatic. Similarly $R^l_k$ is finite since it is comprised of lattice vectors of fixed length. To verify property (iii) it is required to check that
\[
x + (x,m)m
\]
is in $R^l_k$ for any $x$ and $m$ in $R^l_k$. This is obvious since length and orthogonality to $\omega$ are preserved by this reflection. Property~(iv) is also clear as all the roots have length $2$.
\end{proof}

In the cases for which $2 \leq l < k+2$ the root system is easy to identify, since the only possible roots have the form $\ell_i - \ell_j$, where $i \in \ZZ_{> 0}$,~$j \in \ZZ_{> 0}$, $i \neq j$. These vectors give the standard presentation of the root system $A_{l-1}$. In these cases the only $(-1)$-curves disjoint from the singular locus are the exceptional curves of the $l$ blow-ups of $\PP(1,1,k)$, and the Weyl group associated to this root system is the symmetric group of this set of exceptional curves.

Consider the case $k+2 \leq l \leq k+4$. To classify the root systems $R^l_k$ first identify a (large) subsystem. 
\begin{pro}
\label{pro:big_subsystem}
In the case $l=k+4$ a collection of roots is obtained from Table~\ref{fig:roots} by reversing signs and permuting the $b_i$ in all possible ways. The Cartan matrices of these roots are as tabulated in Theorem~\ref{thm:root_systems}. There are analogous collections roots in the cases $l=k+2$ and $l=k+3$ obtained by shortening Table~\ref{fig:roots}.
\end{pro}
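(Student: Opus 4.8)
The plan is to coordinatize everything. Write an arbitrary element of $N^l_k$ as $\ell = a\ell_0 + \sum_{i=1}^l b_i\ell_i$ and translate Definition~\ref{dfn:root_system} into arithmetic using the scalar product of Definition~\ref{dfn:lattice}: one computes $(\ell,\omega) = -a(k+2) - \sum_i b_i$ and $(\ell,\ell) = a^2 k - \sum_i b_i^2$, so that $\ell \in R^l_k$ if and only if
\[
\sum_{i=1}^l b_i = -a(k+2) \qquad\text{and}\qquad \sum_{i=1}^l b_i^2 = a^2 k + 2 .
\]
First I would record the roots named in Table~\ref{fig:roots}: the vectors $\ell_i - \ell_j$ with $1 \le i \neq j \le l$ (these are the solutions with $a=0$), and the vectors $\ell_0 - \sum_{i\in S}\ell_i$ with $S \subseteq \{1,\ldots,l\}$ of size $k+2$ (the solutions with $a=1$), together with their negatives. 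A one-line substitution confirms each satisfies both displayed identities, so these are genuine elements of $R^l_k$, and reversing the overall sign and permuting the $b_i$ visibly reproduces exactly this collection.

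The arithmetic heart of the argument is the substitution $e_i := b_i + a$. A direct expansion, using both identities above, gives
\[
\sum_{i=1}^l e_i = a(l - k - 2) \qquad\text{and}\qquad \sum_{i=1}^l e_i^2 = a^2(l - k - 4) + 2 .
\]
For each $l \in \{k+2,k+3,k+4\}$ the coefficient $l-k-4$ of $a^2$ in the second identity is $\le 0$, so $\sum_i e_i^2 \le 2$; since the $e_i$ are integers this forces $|a|\le 1$ and then determines the $e_i$, hence the $b_i$, completely. This not only shows the collection above is all of $R^l_k$ (no roots with $|a|\ge 2$), but also reproduces the root counts $2(k+4)(k+3)$, $(k+3)(k+4)$, and $(k+1)(k+2)+2$ of $D_{k+4}$, $A_{k+3}$, and $A_{k+1}\times A_1$ respectively, giving an independent sanity check. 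The cases $l=k+2,k+3$ are exactly the ``shortenings'' referred to in the statement: they are obtained from the $l=k+4$ list by deleting the coordinate $\ell_{k+4}$ (and $\ell_{k+3}$), which forces the $a=1$ orbit to have fewer zero entries to permute.

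It then remains to exhibit simple roots and read off the Cartan matrix, working with the positive-definite form $-(\cdot,\cdot)$ on $\omega^\bot$ furnished by Proposition~\ref{pro:root_systems}, under which every root has square length $2$ and Cartan entries are $-(\alpha_i,\alpha_j)$. I would take $\alpha_i := \ell_i - \ell_{i+1}$ for $1 \le i \le l-1$, which reproduce the standard $A_{l-1}$ chain (adjacent roots pair to $-1$, non-adjacent to $0$), together with the single extra root $\beta := \ell_0 - \ell_1 - \cdots - \ell_{k+2}$, which is a valid root for all three values of $l$. Since $(\beta,\alpha_i) = [i\in S] - [i+1\in S]$ with $S = \{1,\ldots,k+2\}$, the node $\beta$ is orthogonal to every $\alpha_i$ except $\alpha_{k+2}$, to which it is joined by a single bond. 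The three cases now fall out by bookkeeping on $l$: when $l=k+2$ the index $k+2$ exceeds $l-1$, so $\beta$ is an isolated node and we get $A_{k+1}\times A_1$; when $l=k+3$ it attaches to the end $\alpha_{k+2}$ of the chain, giving $A_{k+3}$; and when $l=k+4$ it attaches to the interior node $\alpha_{k+2}$, which already has neighbours $\alpha_{k+1}$ and $\alpha_{k+3}$, producing the trivalent branch vertex of $D_{k+4}$.

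The step I expect to require the most care is this last one: verifying that $\alpha_1,\ldots,\alpha_{l-1},\beta$ really form a system of simple roots (not merely a set of roots spanning $\omega^\bot$) and that the bond pattern is read off correctly. In particular, for $l=k+4$ one must check that the fork sits at node $k+2 = (k+4)-2$, which is precisely where the branch node of $D_{k+4}$ belongs; this is the one place where a miscount would silently replace the intended diagram by an $A$-type chain. Once the $l=k+4$ computation is in place, the cases $l=k+2,k+3$ follow from it by the coordinate deletions described above, with no new ideas required.
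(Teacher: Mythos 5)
Your proof is correct, and it takes a genuinely different --- in fact stronger --- route than the paper's. The paper's own proof of this proposition only does three things: it verifies the tabulated vectors are roots, counts them (obtaining $(k+2)(k+1)+2$, $(k+4)(k+3)$, $2(k+4)(k+3)$), and computes the Cartan matrix of the base $\Delta$; the question of whether these are \emph{all} the roots is deliberately deferred to the index-of-connectedness computation (Proposition~\ref{pro:connectedness}) combined with Bourbaki's tables. Your substitution $e_i = b_i + a$ settles completeness directly: the identities $\sum_i e_i = a(l-k-2)$ and $\sum_i e_i^2 = a^2(l-k-4)+2$ classify every integer solution at once, so for $l \leq k+4$ your argument subsumes the paper's later lattice-theoretic step. (It cannot replace that step in the exceptional case $l=k+5$, $k=3$, where $l-k-4>0$ and roots with $a = 2,3$ genuinely occur --- but that case lies outside this proposition.) The final identification of the diagrams via $\alpha_i = \ell_i - \ell_{i+1}$ and $\beta = \ell_0 - \ell_1 - \cdots - \ell_{k+2}$, with the fork correctly placed at node $k+2$, is essentially the paper's own computation.

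Two small repairs are worth recording. First, for $l = k+4$ the quadratic identity gives $\sum_i e_i^2 = 2$ for \emph{every} $a$, so $|a| \leq 1$ does not follow from it alone, contrary to your phrasing; you need the extra half-line that a sum of squares equal to $2$ forces exactly two entries $\pm 1$, whence $|2a| = \left|\sum_i e_i\right| \leq 2$. (For $l = k+2, k+3$ the quadratic identity alone does suffice.) Second, your signs for the $a=1$ roots, namely $\ell_0 - \sum_{i \in S}\ell_i$ with $|S| = k+2$, disagree with Table~\ref{fig:roots}, which lists $b_i = +1$; under the form of Definition~\ref{dfn:lattice} your signs are the correct ones, since $(\ell_0 + \ell_1 + \cdots + \ell_{k+2}, \omega) = -2(k+2) \neq 0$, and the paper itself uses the minus signs in Proposition~\ref{pro:connectedness} --- so you have silently repaired a sign slip in the table and in the displayed base $\Delta$. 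Finally, the simple-system verification you flag is routine given your complete list: every $a=1$ root equals $\beta$ plus a nonnegative combination of the $\alpha_i$ (for instance $\ell_0 - \sum_{m \neq i,j}\ell_m = \beta + (\alpha_i + \cdots + \alpha_{k+2}) + (\alpha_j + \cdots + \alpha_{k+3})$ for $i < j \leq k+2$); alternatively, since $R^l_k$ is closed under reflections by Proposition~\ref{pro:root_systems}, the subsystem generated by your candidate base has the full cardinality computed above, which identifies $R^l_k$ with $D_{k+4}$, $A_{k+3}$, $A_{k+1}\times A_1$ without any further check.
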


\begin{figure}[htbp]
\[
\begin{array}{ccccccc}
a & b_1 & b_2 & \cdots & b_{k+2} & b_{k+3} & b_{k+4} \\ \hline
0 & 1 & -1 & \cdots & 0 & 0 & 0 \\
1 & 1 & 1 & \cdots & 1 & 0 & 0
\end{array}
\]
\caption{Table of the roots of $R^l_k$.}
\label{fig:roots}
\end{figure}
\begin{proof}
Compute the number of roots obtained from Table~\ref{fig:roots} (and its analogues). In each case
\[
|R^l_k| =
\begin{cases}
 (k+2)(k+1)+2, & \text{if } l=k+2; \\
 (k+4)(k+3), & \text{if } l=k+3; \\
 2(k+4)(k+3), & \text{if } l=k+4.
\end{cases}
\]
It is also easy to verify that these collections form a root system, and that a basis is given by the collection
\[
\Delta := \{\ell_{i+1}-\ell_i : 1 \leq i < l\} \cup \{\ell_0 + \cdots + \ell_{k+2}. \}
\]
Note this system has an obvious $A_{l-1}$ subsystem consisting of roots $\ell_i-\ell_j$ for $i \neq j$. In the case $l=k+2$ there are only two additional roots and we obtain the system $A_{k+1} \times A_1$. In the cases $l=k+3$ and $l=k+4$ nte that
\[
(\ell_0 + \cdots + \ell_{k+2},\ell_{i+1}-\ell_i) = 0
\]
unless $i = k+2$ or $i=k+3$. By computing the Cartan matrix of these roots identify these root systems with those enumerated in Theorem~\ref{thm:root_systems}.
\end{proof}

It still remains to verify that the roots obtained in Proposition~\ref{pro:big_subsystem} are all the roots of $R^l_k$. To do this compute the \emph{index of connectedness} of each $R^l_k$, see~\cite{Manin86}. The index of connectedness of a root system $R$ in a Euclidean vector space $V$ is the order of the group $P(R)/Q(R)$ where $Q(R)$ is the lattice in $V$ spanned by the elements of $R$ and
\[
P(R) = \{\ell \in V : (\ell,m) \in \ZZ \text{ for all }m \in Q(R) \}.
\]

\begin{pro}
\label{pro:connectedness}
There are three cases for the index of connectedness of the root system $R^l_k$:
\begin{enumerate}
\item if $l=k+2$, the index of connectedness of $R^l_k$ is $2(k+1)$;
\item if $l=k+3$, the index of connectedness of $R^l_k$ is $k+4$;
\item if $l=k+4$, the index of connectedness of $R^l_k$ is $4$.
\end{enumerate}
\end{pro}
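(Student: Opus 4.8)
The plan is to compute each index of connectedness as the quotient $[P(R^l_k):Q(R^l_k)]$, which for a simply-laced system equals the absolute value of the determinant of the Gram matrix of any set of simple roots (equivalently, the determinant of the Cartan matrix). Since the intersection form is negative definite on $\omega^\bot$ by Proposition~\ref{pro:root_systems}, and since Proposition~\ref{pro:big_subsystem} already exhibits a spanning set of simple roots of the asserted type, I would take those simple roots as a basis, assemble the associated Gram matrix $G$, and read off $|\det G|$ either by cofactor expansion or via its Smith normal form. For the two irreducible cases it is cleanest to argue directly from the abstract Cartan matrix; for the reducible case $l=k+2$ I would instead work with an explicit $\ZZ$-basis of the integral lattice $\Lambda := N^l_k \cap \omega^\bot$ and compute a lattice discriminant, since this is where the delicate bookkeeping occurs.

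The cases $l = k+3$ and $l = k+4$ are the routine ones. Here $R^l_k$ is irreducible, of type $A_{k+3}$ and $D_{k+4}$ respectively. The Cartan determinant of $A_n$ is $n+1$ and that of $D_n$ is $4$; substituting $n = k+3$ and $n = k+4$ yields the values $k+4$ and $4$ asserted in parts (ii) and (iii). For $A_{k+3}$ this is a one-line tridiagonal cofactor recursion, and for $D_{k+4}$ the value $4$ arises from the branch node at the end of the Dynkin diagram and is independent of $k$, which I would confirm by the standard block computation.

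The case $l = k+2$ is where the substance of the proposition lies, and it is the step I expect to be the main obstacle. Here $R^{k+2}_k$ is reducible, decomposing as the orthogonal product of an $A_{k+1}$ factor spanned by the differences $\ell_i - \ell_j$ and an $A_1$ factor generated by the special root $\ell_0 - \ell_1 - \cdots - \ell_{k+2}$, which one checks is the unique root (up to sign) with nonzero $\ell_0$-coefficient. The delicate point is that the value $2(k+1)$ of part (i) must be extracted from the discriminant of $\Lambda$ together with its induced negative-definite form, taking careful account of how the $A_1$ generator is positioned relative to the $A_{k+1}$ sublattice inside $\Lambda$, and of the fractional coefficient $-(k+2)/k$ carried by $\omega$. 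My plan is to pin this down by fixing an explicit $\ZZ$-basis of $\Lambda$, computing the Smith normal form of the Gram matrix of that basis, and reading off the index as the product of the resulting elementary divisors; getting this gluing exactly right, rather than treating the two factors in isolation, is the crux of the argument.

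Finally, I would record the role this computation plays in the proof of Theorem~\ref{thm:root_systems}: having computed the index of connectedness of $R^l_k$ here and that of the spanning subsystem of Proposition~\ref{pro:big_subsystem} separately, the equality of the two indices, combined with equality of ranks, forces the subsystem to exhaust $R^l_k$ and so identifies the root system completely.
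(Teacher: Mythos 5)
Your overall route is genuinely different from the paper's: the paper follows Manin's method, introducing the homomorphism $\chi\colon P(R^l_k)\to\QQ/\ZZ$, $a\ell_0+\sum_i b_i\ell_i\mapsto b_1 \bmod \ZZ$, deriving the congruences $ka-(k+2)b_1\in\ZZ$ and $(k+2)a-lb_1\in\ZZ$ from integrality against the roots $\ell_1-\ell_i$ and $\ell_0-\ell_1-\cdots-\ell_{k+2}$ together with orthogonality to $\omega$, and reading the index off the order of the image of $\chi$. Your Cartan-determinant computation handles cases (ii) and (iii) correctly and more quickly ($k+4$ for $A_{k+3}$, $4$ for $D_{k+4}$). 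The serious problem is case (i), exactly the step you defer as ``the crux''. No gluing computation can land on $2(k+1)$: the $A_1$ generator $\ell_0-\ell_1-\cdots-\ell_{k+2}$ is orthogonal to every $\ell_i-\ell_j$, so the Gram matrix of the simple roots you propose is block diagonal with determinant $2(k+2)$; moreover the lattice $\Lambda=N^{k+2}_k\cap\omega^\bot$ is precisely the span of those roots (for $x=a\ell_0+\sum b_i\ell_i\in\Lambda$, the vector $x-a\left(\ell_0-\ell_1-\cdots-\ell_{k+2}\right)$ lies in the $A_{k+1}$ root lattice), so your Smith-normal-form route also returns $2(k+2)$. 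In fact $2(k+2)$ is the correct index and the stated $2(k+1)$ is a slip in the paper: rerunning the paper's own congruences for $l=k+2$ gives $2a\in\ZZ$ and hence $b_1\in\frac{1}{2(k+2)}\ZZ$, not $\frac{1}{2(k+1)}\ZZ$; and the use made of this proposition in the proof of Theorem~\ref{thm:root_systems} --- where the unknown factor $R$ of rank $k+1$ is identified as type $A$ because its index is ``one larger than its rank'' --- requires the value $k+2$, i.e.\ a total of $2(k+2)$. A Cauchy--Schwarz argument against the condition $(\ell,\omega)=0$ even shows that for $l=k+2$ the only roots are $\pm(\ell_i-\ell_j)$ and $\pm(\ell_0-\ell_1-\cdots-\ell_{k+2})$, confirming this. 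So the delicate bookkeeping you plan cannot succeed as stated; the right move is to flag and correct the constant, not to engineer a computation toward it.

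There is also a structural gap. This proposition is invoked, together with Proposition~\ref{pro:big_subsystem}, to prove that the exhibited subsystem exhausts $R^l_k$; so the index of $R^l_k$ must be computed without assuming that the listed roots generate $Q(R^l_k)$. Your plan takes exactly those simple roots ``as a basis'' and computes their Gram determinant, which is by definition the connection index of the \emph{subsystem}, not of $R^l_k$. Consequently your closing paragraph is vacuous: the ``two indices computed separately'' would be one and the same determinant, and their equality cannot force the subsystem to be everything. The paper's $\chi$-computation avoids this in principle, because integrality of $x\in P(R^l_k)$ against known roots is a necessary condition whatever the full root list is, so the congruences genuinely constrain the dual of $Q(R^l_k)$. (The paper is itself loose here: for even $k$ the group $P/Q$ in cases (i) and (iii) is non-cyclic, so $\chi$ cannot be injective as claimed, though the orders come out right.) If you want to keep your determinant route, it can be made non-circular by adding the observation above that $N^l_k\cap\omega^\bot$ equals the lattice spanned by the listed roots --- since every root of $R^l_k$ is integral and orthogonal to $\omega$, this forces $Q(R^l_k)$ to equal the subsystem's root lattice, and then every $(-2)$-vector of an $ADE$ root lattice is a root, so $R^l_k$ coincides with the subsystem; this would in fact streamline the paper's appeal to the Bourbaki tables as well.
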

\begin{proof}
The proof follows the method of~\cite[Proposition~$25.3$]{Manin86}. Consider the homomorphism
\begin{align*}
\chi\colon P(R^l_k) \rightarrow \QQ/\ZZ \quad \text{for which} \quad \chi\left(a\ell_0 + \sum{b_i\ell_i}\right) = b_1 \mod \ZZ. 
\end{align*}
Writing out the scalar product of $\left(a\ell_0 + \sum{b_i\ell_i}\right) \in P(R^l_k)$ with roots $\ell_1 - \ell_i$ and $\ell_0 - \ell_1 - \ldots - \ell_{k+2}$ the integrality condition implies that,
\begin{align*}
b_1 - b_i \in \ZZ \quad \text{and} \quad ka - b_1 - \cdots - b_{k+2}.
\end{align*}
Furthermore $(k+2)a - \sum\limits^{l}_{i=1}{b_i} = 0$. Thus since $\{\ell_1-\ell_i : 2 \leq i \leq l\}$ and $\ell_0-\ell_1-\ldots-\ell_{k+2}$ jointly generate $N^l_k$, it follows that $\ker(\chi) = N^l_k\cap\omega^\bot$ and
\begin{align*}
ka - (k+2)b_1 \in \ZZ \quad \text{and}  \quad (k+2)a - lb_1 \in \ZZ.
\end{align*}
There are three cases to consider:
\begin{enumerate}
\item $l=k+2$; Then $2a \in \ZZ$, and hence $b_1 \in \frac{1}{2(k+1)}\ZZ$;
\item $l=k+3$; Then $2a-b_1 \in \ZZ$, so $(k+4)a \in \ZZ$. Since $2a-b_1 \in \ZZ$ it follows $b_1 \in \frac{1}{k+4}\ZZ$;
\item $l=k+4$; Then $a-b_1 \in \frac{1}{2}\ZZ$ and hence $b_1 \in \frac{1}{4}\ZZ$.
\end{enumerate}
Thus in each of these three cases $\chi$ is an isomorphism into its image.
\end{proof}

\noindent Consider the index of connectedness of $R^8_3$. In this case
\begin{align*}
3a-5b_1 \in \ZZ \quad \text{and} \quad 5a-8b_1 \in \ZZ.
\end{align*}
However the matrix
\[
\begin{pmatrix}
3 & -5 \\
5 & -8
\end{pmatrix} \in \GL(2,\ZZ)
\]
and thus $b_1 \in \ZZ$ and the index of connectedness of $R^8_3$ is equal to one.

To complete the proof of Theorem~\ref{thm:root_systems} we need to show that all possible roots are classified by Proposition~\ref{pro:big_subsystem}. However, studying the tables in Bourbaki~\cite{Bourbaki}, identify each root system $R^l_k$ using the subsystem found in Proposition~\ref{pro:big_subsystem} and the index of connectedness of $R^l_k$. Make use of the fact that the index of connectedness of a product of root systems is the index of connectedness of its factors. Observe also that all the root vectors in $R^l_k$ have the same length so there are no type $B$ or $C$ factors in the root system $R^k_l$.

In the case $l=k+2$, there are at most two summands, since we have identified orthogonal $A_{k+1}$ and $A_1$ subsystems. Assume there are two factors. One of these is $A_1$ and the other, $R$, contains an $A_{k+1}$ subsystem. Since the index of connectedness of $R$ is equal to $(k+1)$, one larger than its rank, thus $R$ must be of type $A$. Assuming that there is only one summand, there is a contradiction, since the only case with index of connectedness at most four occurs when $k=1$, but the root systems $R^l_1$ are well known.

In the case $l=k+3$ there is at most one summand, of rank $k+3$, and index of connectedness $k+4$. Since $k$ is a positive integer the index of connectedness is always greater than four and thus this root system must be of type $A$.

In the case $l=k+4$ there is at most one summand, of rank $k+4$, and index of connectedness $4$. Thus this root system must be of type $D$.

Since $l \leq k+4$ if $k > 3$ these exhaust all possible cases for general values of $k$. In the case $k=3$ there is a single exceptional case, the root system $R^8_3$ associated to the surface obtained via a section of $\cO(10)$ in the weighted projective space $\PP(1,2,3,5)$. As noted in the discussion following the proof of Proposition~\ref{pro:connectedness}, this root system has index of connectedness equal to one. Therefore $R^8_3$ is of type $E_8$ and the roots can be enumerated similarly to the other cases. The roots of $R^8_3$ are tabulated below, and recall that we are free to permute the $b_i$ and reverse signs to generate roots from the ones listed in this table.

\[
\begin{array}{ccccccccc}
a & b_1 & b_2 & b_3 & b_4 & b_5 & b_6 & b_7 & b_8 \\ \hline
0 & 1 & 0 & 0 & 0 & 0 & 0 & 0 & -1 \\
1 & 1 & 1 & 1 & 1 & 1 & 0 & 0 & 0 \\
2 & 2 & 2 & 1 & 1 & 1 & 1 & 1 & 1 \\
3 & 2 & 2 & 2 & 2 & 2 & 2 & 2 & 1 \\
\end{array}
\]
Permuting all entires $b_i$ and changing signs obtain
\[
2\left( \binom{8}{2} + \binom{8}{3} + \binom{8}{2} + \binom{8}{1} \right) = 240
\]
roots in $R^l_k$. Moreover the Cartan matrix formed from the basis $(\ell_{i+1}-\ell_{i})$ and $(\ell_0 + \ldots + \ell_5)$ is precisely the Cartan matrix of the $E_8$ root system.

\section{The Proof of Theorem~\ref{thm:cascade}}

The proof of~Theorem~\ref{thm:cascade} is based on the \emph{directed MMP} and has an identical structure to the classification of del~Pezzo surfaces with $\frac{1}{3}(1,1)$ singularities in~\cite{CH16}, although our current task is made considerably simpler by the assumption there is a \emph{single} $\frac{1}{k}(1,1)$ singularity.

\begin{dfn}
\label{dfn:floating_minus_one_curve}
Given a del~Pezzo surface $X$ and rational curve $C \subset X$, then $C$ is a \emph{floating $(-1)$-curve} if $C$ is contained in the smooth locus of $X$ and $C^2 = -1$.
\end{dfn}

We rely heavily on the classification of extremal contractions for surfaces containing a single singular point of the form $\frac{1}{k}(1,1)$. This classification is made in Proposition~\ref{pro:extremal_contractions} and is directly analogous to~\cite[Theorem~31]{CH16}.

\begin{pro}
\label{pro:extremal_contractions}
Given a del~Pezzo surface $X$ with a single singular point of the form $\frac{1}{k}(1,1)$, let $E$ denote the exceptional curve of the minimal resolution $\widehat{X} \rightarrow X$ and let $f\colon X \rightarrow X_1$ be an extremal contraction. Exactly one of the following holds:
\begin{enumerate}
\item the morphism $f$ is the contraction of a floating $(-1)$-curve;
\item the morphism $f$ is the contraction of a $(-1)$-curve in the minimal resolution of $X$ meeting the curve $E$ once. The surface $X_1$ has one singular point of the form $\frac{1}{k-1}(1,1)$ if $k > 1$ and is smooth if $k=2$;
\item the morphism $f$ is a Mori fibre space contraction. In this case $X_1$ is a single point and $X \cong \PP(1,1,k)$.
\end{enumerate}
\end{pro}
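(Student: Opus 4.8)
The plan is to analyse the extremal contraction $f\colon X \to X_1$ by passing to the minimal resolution $\pi\colon \widehat{X} \to X$ and tracking how the exceptional curve $E$ (with $E^2 = -k$) interacts with the curve contracted by $f$. Since $X$ has a single $\frac{1}{k}(1,1)$ singularity and $-K_X$ is ample, an extremal contraction of $X$ corresponds to a $K_X$-negative extremal ray $R$ of $\overline{NE}(X)$, and the key observation is that on a surface any such ray is generated either by a curve of negative self-intersection contractible to a point, or by the class of a fibre realising a Mori fibre space. I would first set up the correspondence between extremal rays on $X$ and on $\widehat{X}$: a curve $C$ generating $R$ pulls back to $\widehat{X}$, and I would distinguish cases according to whether the strict transform $\widehat{C}$ of $C$ meets $E$ or not.

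The main dichotomy is whether $f$ contracts a curve lying in the smooth locus of $X$. If $C$ is disjoint from the singular point, then $\widehat{C}$ is disjoint from $E$; since $f$ is an extremal contraction of a del~Pezzo surface and $C$ is contracted to a point in its smooth locus, $C^2 = -1$ and $C$ is a floating $(-1)$-curve, giving case~(i). If instead $C$ passes through the singular point, the interesting case is when $\widehat{C}$ meets $E$. Here I would argue that extremality forces $\widehat{C}$ to be a $(-1)$-curve meeting $E$ exactly once: a transverse single intersection $\widehat{C}\cdot E = 1$ means that contracting $\widehat{C}$ lowers the self-intersection of the image of $E$ by exactly one, so $E$ becomes a $-(k-1)$-curve, and after re-contracting we obtain a surface $X_1$ with a single $\frac{1}{k-1}(1,1)$ singularity (smooth when $k-1 = 1$, i.e. $k=2$), which is case~(ii). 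The remaining possibility is that $R$ is not contractible to a point but defines a fibration; then $f$ is a Mori fibre space over a curve or a point, and a rank count combined with the presence of a single $\frac{1}{k}(1,1)$ singularity forces $X_1$ to be a point and $X \cong \PP(1,1,k)$, giving case~(iii).

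I expect the main obstacle to be ruling out unwanted intersection behaviour in case~(ii): one must show that a $K_X$-negative extremal ray through the singular point is generated by a curve whose strict transform meets $E$ \emph{exactly once} and transversally, rather than with higher multiplicity or at several points. The cleanest way to handle this is to work on the resolution and use the structure of the $K_{\widehat{X}}$-MMP together with the fact that $E$ is the unique $\pi$-exceptional curve; since $\widehat{X}$ has rational singularity theory under control, adjunction on $\widehat{X}$ and the projection formula $K_X \cdot C = K_{\widehat{X}}\cdot \widehat{C} + (\text{discrepancy term})$ pin down the numerical type of $\widehat{C}$. Because the present situation involves only a single singular point, I anticipate this intersection bookkeeping to be substantially simpler than in the multi-singularity setting of~\cite[Theorem~31]{CH16}, so the argument should reduce to a short case analysis on the resolution followed by verifying that the resulting contraction is $\QQ$-Gorenstein and preserves the del~Pezzo property, so that $X_1$ again lies in our class of surfaces.
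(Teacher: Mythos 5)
Your overall strategy is viable but genuinely different from the paper's, and it contains one concrete gap. On the point of difference: the paper never analyses extremal rays numerically. It works entirely on the minimal resolution and invokes the classification of rational surfaces: it first shows $\widehat{X}$ is rational (the anti-canonical class of $\widehat{X}$ is big, and the only ruled surface with big anti-canonical class is rational), then contracts all floating $(-1)$-curves and all $(-1)$-curves meeting $E$ once, and rules out any remaining $(-1)$-curve $C$ with $E\cdot C \geq 2$ by observing that the contraction process terminates in $\FF_l$ or $\PP^2$, so the last contraction is the blow-down of the exceptional curve of a point blow-up, which can meet (the image of) $E$ at most once. Your alternative route through the discrepancy computation does close the case-(ii) issue you flag as the main obstacle, and it is worth recording that it closes it cleanly: writing $K_{\widehat{X}} = \pi^*K_X - \frac{k-2}{k}E$ and $a = \widehat{C}\cdot E$ for a $(-1)$-curve $\widehat{C}$ with image $C$, one computes $K_X\cdot C = -1 + \frac{a(k-2)}{k}$ and $C^2 = -1 + \frac{a^2}{k}$; if $a \geq 2$ then $C^2 < 0$ forces $k > a^2 \geq 4$ while $K_X \cdot C < 0$ forces $k < \frac{2a}{a-1} \leq 4$, a contradiction. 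So a $K_X$-negative birationally contractible class automatically satisfies $a \leq 1$, which is exactly the dichotomy between your cases (i) and (ii). This numerical argument is arguably tighter than the paper's, at the cost of assuming (rather than proving, as the paper does) that $\widehat{X}$ is rational and that the surface MMP dichotomy applies in this singular setting.

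The genuine gap is case (iii). ``A rank count'' does not force $X_1$ to be a point: a priori the extremal contraction could be a Mori fibre space over $\PP^1$ (a conic bundle), and nothing in your sketch excludes this or identifies $X$ with $\PP(1,1,k)$. Both steps need actual arguments. For the first, suppose $f\colon X \to \PP^1$ were an extremal fibration; since relative Picard rank one makes fibres irreducible, the fibre through the singular point lifts to a connected vertical divisor $\widehat{F} = C + mE$ on $\widehat{X}$ with a single component $C$ besides $E$. Then $\widehat{F}^2 = 0$, $\widehat{F}\cdot E = 0$, and $K_{\widehat{X}}\cdot \widehat{F} = -2$ combined with adjunction on $C$ yield $C \cdot E = 2-k \leq 0$ for $k \geq 2$, contradicting connectedness of $\widehat{F}$. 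For the second, once $\rho(X) = 1$ you have $\rho(\widehat{X}) = 2$, and you must use rationality of $\widehat{X}$ (which your proposal nowhere establishes) plus the classification of rational surfaces of Picard rank two containing a curve of self-intersection $-k$ to conclude $\widehat{X} \cong \FF_k$ and hence $X \cong \PP(1,1,k)$. This is precisely the part of the paper's argument that your numerical approach does not replace, so it must be supplied separately.
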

\begin{proof}
Fix an integer $k > 1$, let $X$ be a del~Pezzo surface with a single $\frac{1}{k}(1,1)$ singularity and let $\widehat{X} \rightarrow X$ be its minimal resolution with exceptional curve $E$. The surface $\widehat{X}$ is, by construction, a smooth projective surface with big anti-canonical class. Since $\widehat{X}$ has Kodaira dimension $-\infty$, $\widehat{X}$ is a ruled surface, i.e.~$\widehat{X}$ is birational to $\PP^1 \times C$ for some curve $C$. However the only such surface with big anti-canonical class is $\PP^1\times \PP^1$ and hence $\widehat{X}$ is rational.

By the classification of rational surfaces, see for example Beauville~\cite{Beauville}, if $\widehat{X}$ contains no $(-1)$-curves it is isomorphic to the Hirzebruch surface $\FF_k$ (since $\widehat{X}$ contains a negative curve of self-intersection $-k$). Suppose now that $\widehat{X}$ contains a $(-1)$-curve $C$; after contracting all floating $(-1)$-curves and all curves $C$ such that $C.C=-1$, and $C.E = 1$ we have a surface $\widehat{X}_1$. So if $C$ is a rational curve in $\widehat{X}_1$ and $C.C=-1$, then $E.C \geq 2$. Contracting all such curves obtain a surface $\widehat{X}_2$ isomorphic to $\FF_l$ for some $l \in \ZZ_{\geq 0}$, or $\PP^2$. However the last contraction was the blow-up of a point on $\widehat{X}_2$ and this will not meet $E$ in more than one point.
\end{proof}

The list of extremal contractions appearing in Proposition~\ref{pro:extremal_contractions} is much shorter than that appearing in~\cite[Theorem~$31$]{CH16} and consequently the analysis of the directed MMP is much more straightforward. This is due to the presence of exactly one singular point and the simple form of its minimal resolution.

It is also important to ensure that type (ii) divisorial contractions do not introduce more floating $(-1)$-curves. This is analogous to~\cite[Lemma~$33$]{CH16} in our (simpler) context.


\begin{proof}[Proof of Theorem~\ref{thm:cascade}]
Fix an integer $k > 1$, let $X$ be a del~Pezzo surface with a single $\frac{1}{k}(1,1)$ singularity and let $\hat{X} \rightarrow X$ be its minimal resolution with exceptional curve $E$. Assume that there are no floating $(-1)$-curves. Either there is a divisorial contraction (ii) of $X$, or $X$ is the weighted projective space $\PP(1,1,k)$. If $X$ is equal to $\PP(1,1,k)$ we are done. Assuming that $X$ is not isomorphic to $\PP(1,1,k)$ there is a sequence of divisorial contractions and taking the longest possible composition of these $\pi \colon \hat{X} \rightarrow \hat{X}_1$, $\pi(E)^2 = l$ for some $0 \leq l < k $ . If $l > 0$, $\hat{X}_1$ must be isomorphic to $\FF_l$. However blowing up a point in the negative curve of $\FF_l$ introduces a floating $(-1)$-curve, so this cannot occur. If $l = 0$ then $\hat{X} \cong \PP^1\times \PP^1$; it is easily seen that the surface $B^{(k)}_k$ admits such a sequence of contractions.
\end{proof}

\section{Surfaces with larger baskets}
\label{sec:larger_baskets}
In this section we complete the proof of Theorem~\ref{thm:small_classification}. In particular we classify families of locally $\QQ$-Gorenstein rigid del~Pezzo surfaces with baskets of $R$-singularities of the form
\[
\left\{ m_1 \times \frac{1}{3}(1,1), m_2 \times\frac{1}{5}(1,1), m_3 \times\frac{1}{6}(1,1) \right\},
\]
such that 
\begin{align*}
m_1=0,m_2 > 0,m_3=0 \quad \text{or} \quad m_1 \geq 0,m_2=0,m_3 >0,
\end{align*}





\noindent which admit a  $\QQ$-Gorenstein toric degeneration. The toric varieties to which such a surface can degenerate are classified in~\cite{CK17}; applying Laurent inversion to these cases gives models for these surfaces. The main results of~\cite{CK17} show that either such a surface contains a single $\frac{1}{k}(1,1)$ singularity, for $k \in \{3,5,6\}$, or is one of three exceptional cases. In this section we show that all of these surfaces are hypersurfaces in weighted projective spaces. In particular, consider polygons $1.13$ and $1.14$ from~\cite{CK17}. While we use Laurent inversion here we could also use the Ehrhart series of the dual polygons to those appearing in~\cite{CK17} to guess the hypersurface model.
\medskip

Polygon $1.13$ is given by $\conv{ \big\{ (-1,1) , (1,1) , (5,-1), (-5,-1) \big\} }$. After mutating the $T$-singularities to the top edge obtain the polygon $P = \conv{\big\{ (-6,-1) , (0,1) , (6,-1) \big\}}$ (up to $GL(N)$-equivalence). Use the following scaffolding of $P$ consisting of two struts:

\begin{center}
\begin{tikzpicture}[scale=0.8, transform shape]
\begin{scope}
\clip (-6.3,-1.3) rectangle (6.3cm,1.3cm); 
\draw[line width=0.5mm,black] (-6,-1) -- (6,-1) -- (0,1) -- (-6,-1); 
\draw[line width=0.5mm, red] (-5.8,-0.9) -- (5.8,-0.9); 
\foreach \x in {-7,-6,...,7}{                           
    \foreach \y in {-7,-6,...,7}{                       
    \node[draw,shape = circle,inner sep=1pt,fill] at (\x,\y) {}; 
    }
}
 \node[draw,shape = circle,inner sep=4pt] at (0,0) {}; 
  \node[draw,shape = circle,color=red,inner sep=2pt,fill] at (0,1) {}; 
\end{scope}
\end{tikzpicture}
\end{center}

\begin{enumerate}
\item the single point $\big\{ (0,1) \big\}$;
\item the segment \big[  (-6,-1) , (6,-1) \big].
\end{enumerate}
\noindent By Laurent inversion obtain the weight matrix
\[ \mathcal{M} = \begin{pmatrix} 1 & 6 & 6 & 1 \end{pmatrix}. \]
Therefore $X_{P}$ is given by the a general section of $\cO(12)$ in $\PP(1,1,6,6)$. By Theorem~\ref{thm:quasismooth_codim_1} $X_P$ is quasismooth and also $X_{P}$ inherits two $\frac{1}{6}(1,1)$ from the ambient weighted projective space.

Via a different scaffold it is possible to obtain a different model. Mutate our original representative of polygon 1.13, namely $P_{1.13} := \conv{\big\{ (-1,1) , (1,1) , (5,-1), (-5,-1) \big\}}$, to the representative by $\conv{\big\{ (-3,1) , (3,1) , (3,-1), (-3,-1) \big\}}$. Scaffold $P_{1.13}$ via a single strut as shown below:

\begin{center}
\begin{tikzpicture}[scale=0.8, transform shape]
\begin{scope}
\clip (-3.3,-1.3) rectangle (3.3cm,1.3cm); 
\draw[line width=0.5mm,black] (-3,-1) -- (3,-1) -- (3,1) -- (-3,1) -- (-3,-1); 
\draw[line width=0.5mm, red] (-2.9,-0.9) -- (2.9,-0.9) -- (2.9,0.9) -- (-2.9,0.9) -- (-2.9,-0.9); 
\foreach \x in {-7,-6,...,7}{                           
    \foreach \y in {-7,-6,...,7}{                       
    \node[draw,shape = circle,inner sep=1pt,fill] at (\x,\y) {}; 
    }
}
 \node[draw,shape = circle,inner sep=4pt] at (0,0) {}; 
\end{scope}
\end{tikzpicture}
\end{center}

\noindent Laurent inversion gives the weight matrix
\[
\mathcal{M} =
\begin{pmatrix}
1 & 1 & 1 & 3 & 3
\end{pmatrix},
\]
and the corresponding toric variety is the complete intersection of the vanishing of two general sections of $\cO(2)$ and $\cO(6)$ in $\PP(1,1,1,3,3)$. It is routine to check that this has the appropriate singularities.

In fact the two models
\begin{align*}
\PP(1,1,6,6), \cO(12), \\
\PP(1,1,1,3,3), \cO(2)\oplus\cO(6),
\end{align*}
are isomorphic. This can be seen by observing that (possibly after a change of co-ordinates) the vanishing locus of a general section of $\cO(2)$ on $\PP(1,1,1,3,3)$ is isomorphic to the image of the degree $2$ Veronese embedding $\PP(1,1,6,6) \hookrightarrow \PP(1,1,1,3,3)$ defined by sending
\[
(x_1,x_2,y_1,y_2) \mapsto (x^2_1,x_1x_2,x^2_2,y_1,y_2).
\]

\medskip

In fact the hypersurface model of these surfaces generalises to a construction of a del~Pezzo surface with a pair of $R$-singularities $\frac{1}{k_1}(1,1)$,~$\frac{1}{k_2}(1,1)$ for any pair of positive integers $k_1$,~$k_2$. Consider the polygon $P$ with vertices $(0,1),(-k_1,-1),(k_2,-1)$. Scaffold using the struts as illustrated:
\begin{center}
\begin{tikzpicture}[scale=0.8, transform shape]
\begin{scope}
\clip (-4,-1.6) rectangle (6cm,1.6cm); 
\draw[line width=0.5mm,black] (-3,-1) -- (0,1) -- (5,-1) -- (-3,-1); 
\draw[line width=0.5mm, red] (-2.9,-0.9) -- (4.9,-0.9); 
\node[draw,shape = circle,inner sep=1pt,fill] at (0,0) {}; 
\node[draw,shape = circle,inner sep=1pt,fill,label = below:{$(k_{2},-1)$}] at (5,-1) {}; 
\node[draw,shape = circle,inner sep=1pt,fill,label = below:{$(-k_{1},-1)$}] at (-3,-1) {}; 
\node[draw,shape = circle,inner sep=1pt,fill,label = above:{$(0,1)$}] at (0,1) {}; 
 \node[draw,shape = circle,inner sep=4pt] at (0,0) {}; 
   \node[draw,shape = circle,color=red,inner sep=2pt,fill] at (0,1) {}; 
\end{scope}
\end{tikzpicture}
\end{center}

This polygon has two R-cones representing $\frac{1}{k_{1}}(1,1)$ and $\frac{1}{k_{2}}(1,1)$ cyclic quotient singularities. Laurent inversion gives us the weight matrix:
\[ \mathcal{M} = \begin{pmatrix} 1 & k_{1} & k_{2} & 1 \end{pmatrix}. \]

\noindent Thus the toric variety $X_P$ is a subvariety of $\PP_{(x_{1}:x_{2}:y_{1}:y_{2})}(1,1,k_1,k_2)$ cut out by the equation
\[
y_1y_2 - x^{k_1}_1x^{k_2}_2.
\]
Consider the del~Pezzo surface given by the vanishing of a general section of $\mathcal{O}(k_{1}+k_{2})$ on $\PP(1,1,k_1,k_2)$. By Theorem~\ref{thm:quasismooth_codim_1} the surface is quasismooth and the only singularities are inherited from the ambient space. Assume $k_{1} \neq k_{2}$ and without loss of generality $k_{1} < k_{2}$ so that $k_{2}=nk_{1}+r$. If $r=0$, then a general section of $\mathcal{O}(k_{1}+k_{2})$ is given by 
\[ f = \sum\limits_{i=0}^{n-1} f_{(1-i)k_{1}+k_{2}}(x_{0},x_{1}) y^{i} + yz + y^{n}, \]
where $x_{0},x_{1},y,z$ are coordinates on $\mathbb{P}(1,1,k_{1},k_{2})$. Then the surface intersects the orbifold locus at the points $[0:0:0:1]$ and $[0:0:1:-1]$ giving cyclic quotient singularities $\frac{1}{k_{1}}(1,1)$ and $\frac{1}{k_{2}}(1,1)$ respectively. If $r \neq 0$, then a general section is given by 
\[ f = \sum\limits_{i=0}^{n} f_{(1-i)k_{1}+k_{2}}(x_{0},x_{1}) y^{i} + yz. \]
The zero locus of $f$ intersects the orbifold locus at $[0:0:0:1]$ and $[0:0:1:0]$ giving cyclic quotient singularities $\frac{1}{k_{1}}(1,1)$ and $\frac{1}{k_{2}}(1,1)$ on the del~Pezzo surface. The case of $k_{1}=k_{2}$ is treated similarly.

\begin{cor}
There exists a del~Pezzo surface admitting a toric degeneration with exactly two $R$-singularities $\frac{1}{k_{1}}(1,1)$ and $\frac{1}{k_{2}}(1,1)$ given by the vanishing of a general section of $\cO(k_{1}+k_{2})$ on $\PP(1,1,k_{1},k_{2})$. Considering the local models near the smoothable singularities of the respective toric varieties it is easily verifiable that this deformation is $\QQ$-Gorenstein.
\end{cor}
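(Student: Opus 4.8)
The plan is to assemble the ingredients produced in the preceding construction into the three assertions of the statement: that $X$ is a del~Pezzo surface with exactly the two prescribed singularities, that it degenerates to a toric variety, and that this degeneration is $\QQ$-Gorenstein. First I would record the del~Pezzo property. Let $X \subset \PP(1,1,k_1,k_2)$ be the vanishing locus of a general section of $\cO(k_1+k_2)$. Since $\PP(1,1,k_1,k_2)$ is well-formed (any three of the weights $1,1,k_1,k_2$ contain a weight equal to $1$, so their greatest common divisor is $1$), Theorem~\ref{thm:quasismooth_codim_1} applies and $X$ is quasismooth; hence its singularities are precisely those inherited from the orbifold loci of the ambient space, and the explicit sections displayed above show that $X$ meets these loci in exactly one $\frac{1}{k_1}(1,1)$ point and one $\frac{1}{k_2}(1,1)$ point (the cases $r=0$, $r\neq0$, and $k_1=k_2$ having been handled separately). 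By adjunction $\omega_X = \cO_X\bigl((k_1+k_2)-(2+k_1+k_2)\bigr) = \cO_X(-2)$, so $-K_X = \cO_X(2)$ is ample and $X$ is a del~Pezzo surface of Fano index $2$.

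Next I would exhibit the degeneration. The binomial $y_1y_2 - x_1^{k_1}x_2^{k_2}$ is itself a section of $\cO(k_1+k_2)$ and cuts out the toric variety $X_P$ returned by Laurent inversion. Interpolating between a general section $f$ and this binomial gives a one-parameter family of sections $s_\lambda$, and hence a family $\cX \subset \PP(1,1,k_1,k_2) \times \AA^1$ cut out by $\{s_\lambda = 0\}$ together with its projection $\cX \rightarrow \AA^1$. Passing to affine cones, $\cX$ is a single hypersurface in $\AA^4 \times \AA^1$; equivalently, every fibre is a divisor in the fixed linear system $\lvert\cO(k_1+k_2)\rvert$ and so shares one Hilbert polynomial, whence the morphism is flat, and it is proper since $\PP(1,1,k_1,k_2)$ is. The special fibre is the normal toric variety $X_P$, so $\cX \rightarrow \AA^1$ is a toric degeneration of $X$.

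Finally I would verify the $\QQ$-Gorenstein property. Relative adjunction gives $-K_{\cX/\AA^1} = \cO(2)|_{\cX}$, which is $\QQ$-Cartier and relatively ample. The two $R$-singularities $\frac{1}{k_1}(1,1)$ and $\frac{1}{k_2}(1,1)$, carried by the vertices $(-k_1,-1)$ and $(k_2,-1)$ of $P$, are rigid under $\QQ$-Gorenstein deformation and persist in both fibres; the remaining singularities of $X_P$ lie along the toric boundary over the lower edge of $P$ and are the smoothable $T$-singularities that a general $f$ smooths. To conclude I would restrict the family to an étale or analytic neighbourhood of each of these $T$-singularities and compare it with the standard $\QQ$-Gorenstein one-parameter smoothing of the corresponding cyclic quotient $T$-singularity. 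The main obstacle is exactly this local verification: one must confirm that the total space is canonical (equivalently, that $K_{\cX}$ is $\QQ$-Cartier at the singular points of the central fibre) and that the pencil restricts to the $\QQ$-Gorenstein smoothing direction at every $T$-singularity while leaving the rigid $R$-locus untouched. Once this is checked point by point, the degeneration is $\QQ$-Gorenstein by definition.
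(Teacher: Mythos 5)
Your proposal is correct and follows essentially the same route as the paper: quasismoothness of the general section via Theorem~\ref{thm:quasismooth_codim_1} together with the explicit case analysis of where it meets the orbifold locus, degeneration within the fixed linear system $|\cO(k_1+k_2)|$ to the Laurent-inversion binomial $y_1y_2 - x_1^{k_1}x_2^{k_2}$, and the $\QQ$-Gorenstein property verified through local models at the smoothable singularities --- your added details (flatness via constancy of the Hilbert polynomial, relative adjunction giving $-K_{\cX/\AA^1}$ $\QQ$-Cartier and relatively ample, which in fact already meets the paper's Definition of a $\QQ$-Gorenstein degeneration) only make explicit what the paper leaves implicit. One cosmetic correction: the two $R$-cones of $P$ are the cones over the slanted edges $\conv{\{(0,1),(-k_1,-1)\}}$ and $\conv{\{(0,1),(k_2,-1)\}}$ rather than being ``carried by'' the vertices $(-k_1,-1)$ and $(k_2,-1)$ (vertices correspond to toric divisors, edges to the fixed points), and the residual smoothable locus is the single du~Val point of type $A_{k_1+k_2-1}$ coming from the cone over the bottom edge, though neither slip affects your argument.
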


\begin{rem}
\label{rem:pairs_of_singularities}
Of course, the surfaces appearing in Theorem~\ref{thm:small_classification} with more than one $R$-singularity admit models as sections of $\cO(k_1+k_2)$ in $\PP(1,1,k_1,k_2)$. There are four cases with $R$-singularites $\frac{1}{k}(1,1)$ with $k < 7$, these are the del~Pezzo surfaces
\begin{enumerate}
\item $X_8 \subset \PP(1,1,3,5)$ defined by a general section of $\cO(8)$;
\item $X_9 \subset \PP(1,1,3,6)$ defined by a general section of $\cO(9)$;
\item $X_{10} \subset \PP(1,1,5,5)$ defined by a general section of $\cO(10)$;
\item $X_{11} \subset \PP(1,1,5,6)$ defined by a general section of $\cO(11)$.
\end{enumerate}
Of these $X_9$ and $X_{10}$ are needed to complete the proof of Theorem~\ref{thm:small_classification}.
\end{rem}


\section{Mirror Symmetry}
\label{sec:mirror_symmetry}

\subsection{Mutation classes of polygons}
\label{sec:conjecture_A}

It is vital to understand the notion of mutations introduced by Akhtar--Coates--Galkin--Kasprzyk~\cite{ACGK}. The constructions used throughout this article produce a smoothing $X$ of the toric variety $X_P$ associated to a Fano polygon $P$ embedded in a toric variety of higher dimension. Mirror Symmetry can be studied in~\cite{A+,CCGGK}. A general conjecture, inspired by Mirror Symmetry, is made in~\cite{A+} to describe the set of toric varieties to which $X$ degenerates:

\begin{conjecture}[{\cite[Conjecture~A]{A+}}]
There is a canonical bijection between the set of mutation equivalence classes of Fano polygons and deformation families of $\QQ$-Gorenstein locally rigid del~Pezzo surfaces with cyclic quotient singularities which admit a toric degeneration.
\end{conjecture}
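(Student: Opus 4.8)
The plan is to exhibit the bijection through an explicit \emph{smoothing} map and to reduce the entire content of the conjecture to a single transitivity statement about toric degenerations. Given a Fano polygon $P \subset \NQ$, its spanning fan defines a toric del~Pezzo surface $X_P$ whose singularities are the cyclic quotient singularities attached to the cones of $P$; by the T/R-dichotomy of Akhtar--Kasprzyk~\cite{AK14} the T-singularities are $\QQ$-Gorenstein smoothable while the R-singularities are rigid, so the generic $\QQ$-Gorenstein deformation of $X_P$ is a del~Pezzo surface $X$ retaining exactly the R-singularities. Define $\Phi([P])$ to be the deformation family of this $X$; this is the \emph{canonical} map of the conjecture, involving no choices. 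The first step is to check that $\Phi$ descends to mutation classes: a combinatorial mutation of $P$ in the sense of~\cite{ACGK} lifts to a $\QQ$-Gorenstein one-parameter flat family whose total space is toric and whose two distinguished toric fibres are $X_P$ and $X_{P'}$, so $X_P$ and $X_{P'}$ lie in a common deformation family and $\Phi([P]) = \Phi([P'])$.

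Surjectivity of $\Phi$ is essentially the definition of the target: if $X$ admits a $\QQ$-Gorenstein toric degeneration with central fibre $X_0$, then $X_0$ is a normal toric del~Pezzo surface with cyclic quotient singularities, and its $\QQ$-Gorenstein polarisation is recorded by a Fano polygon $P_0$ with $X_0 \cong X_{P_0}$; thus $\Phi([P_0]) = [X]$. All the difficulty is therefore concentrated in \emph{injectivity}, which is equivalent to the assertion that the set of $\QQ$-Gorenstein toric degenerations of a fixed del~Pezzo surface $X$ forms a \emph{single} mutation class of Fano polygons.

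To attack injectivity I would pass to the mirror. Following Gross--Hacking--Keel~\cite{GHK1,GHK2} and Gross--Hacking--Keel--Kontsevich~\cite{GHKK}, the log Calabi--Yau pair $(X, -K_X)$ determines a consistent scattering diagram $\mathfrak{D}$ on the tropicalisation $\NQ$ together with a canonical theta-function basis. The plan is then to prove two things: (A) the $\QQ$-Gorenstein toric degenerations of $X$ are in bijection with the chambers of $\mathfrak{D}$, the Fano polygon of a degeneration being read off from the corresponding chamber via the theta-function polarisation; and (B) crossing a wall of $\mathfrak{D}$ realises precisely a combinatorial mutation of~\cite{ACGK} relating the adjacent polygons. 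Granting (A) and (B), injectivity follows because the chamber decomposition underlying a consistent scattering diagram is connected through its walls, so the wall-crossing groupoid acts transitively on chambers; hence any two toric degenerations of $X$ are joined by a finite chain of mutations and $\Phi$ is injective. This also accounts for the \emph{canonical} wording, since the chamber structure is intrinsic to $(X,-K_X)$.

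The main obstacle is establishing (A) in full generality. Matching the combinatorics of $\mathfrak{D}$ to the enumeration of toric degenerations requires that every chamber produce an honest $\QQ$-Gorenstein toric variety smoothing back to $X$, that distinct chambers produce inequivalent polygons, and that no toric degeneration is missed; Gross--Hacking--Keel establish results of this shape under positivity and finite-generation hypotheses, but the surfaces here carry arbitrary cyclic quotient singularities and arbitrarily small anticanonical degree, regimes in which the requisite consistency and the existence of the intrinsic mirror are subtle and not known in general. A complementary, weaker line of evidence --- which I would record but not rely on --- is the period heuristic of~\cite{A+,CCGGK}: the classical period of the Laurent-polynomial mirror $f_P$ is a mutation invariant~\cite{ACGK} and is conjecturally the regularised quantum period of $X$, a symplectic deformation invariant; were the period known to determine the mutation class, injectivity would follow, but this rests on the same circle of open conjectures. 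Consequently I regard (A)--(B) as the genuine open core of the statement, with step (B) tractable by explicit wall-crossing computation and step (A) the decisive difficulty.
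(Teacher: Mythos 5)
You have not proved the statement, and neither does the paper: what you were given is a conjecture (Conjecture~A of~\cite{A+}), quoted verbatim, and your own write-up concedes that its core --- your steps (A) and (B), matching $\QQ$-Gorenstein toric degenerations of $X$ with chambers of a consistent scattering diagram and identifying wall-crossing with the combinatorial mutations of~\cite{ACGK} --- is open. So the proposal is a strategy outline, with all of injectivity deferred to unproven assertions about scattering diagrams in exactly the regimes (arbitrary cyclic quotient singularities, arbitrarily small anticanonical degree) where the positivity and finite-generation hypotheses of Gross--Hacking--Keel are not known to hold. Your reformulation of injectivity --- that the $\QQ$-Gorenstein toric degenerations of a fixed surface should form a single mutation class --- is the correct and standard one, and your surjectivity and well-definedness remarks are essentially right; but note one concrete error en route: the one-parameter family connecting $X_P$ and $X_{P'}$ across a mutation (Ilten's construction) has total space a complexity-one $T$-variety, \emph{not} a toric variety. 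Both distinguished fibres are toric; the total space is not, so that sentence as written is false, though the conclusion $\Phi([P])=\Phi([P'])$ survives.

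For comparison, what the paper actually establishes is far weaker and proved by a route disjoint from yours: the Proposition of \S\ref{sec:conjecture_A} verifies Conjecture~A only for the baskets of Theorem~\ref{thm:small_classification}. Since $\QQ$-Gorenstein deformations of surfaces are unobstructed, it suffices there to show that the finitely many mutation classes of Fano polygons with those baskets (classified in~\cite{CK17}) have pairwise non-isomorphic generic deformations. Singularity content~\cite{AK14} --- which computes the topological Euler characteristic of the smooth locus of the general $\QQ$-Gorenstein deformation --- separates every pair of classes except those degenerating from $X^{(k)}_k$ and $B^{(k)}_k$; these are separated by the finer invariant of~\cite{KNP15}, the fundamental group of the complement of a general anti-canonical divisor, computed from the polygon as the quotient of $M$ by the lattice spanned by all mutation weight vectors (trivial for $X^{(k)}_k$, $\ZZ_2$ for $B^{(k)}_k$). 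If you want a statement you can actually prove, this invariant-theoretic method --- exhibiting mutation invariants of the polygon that are simultaneously deformation invariants of the smoothing --- is the tractable direction; your mirror-symmetric program is a reasonable account of \emph{why} the conjecture is believed, but as it stands it establishes nothing beyond what it assumes.
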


Since $\QQ$-Gorenstein deformations of surfaces are unobstructed (see~\cite{A+}) to verify Conjecture~A for Fano polygons with a specified basket of $R$-singularities it is sufficient to identify the mutation classes of Fano polygons with these singularities, and verify that their respective $\QQ$-Gorenstein deformations are never isomorphic.

\begin{pro}
Conjecture~A holds for del~Pezzo surfaces with the baskets of singularities which appear in statement of Theorem~\ref{thm:small_classification}.
\end{pro}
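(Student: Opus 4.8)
The plan is to verify Conjecture~A separately for the two baskets described in Theorem~\ref{thm:small_classification}, following the criterion stated immediately before this proposition: since $\QQ$-Gorenstein deformations of surfaces with cyclic quotient singularities are unobstructed, it suffices to show (a) that the mutation classes of Fano polygons with each prescribed basket of $R$-singularities are in bijection with the deformation families claimed, and (b) that distinct mutation classes yield non-isomorphic $\QQ$-Gorenstein deformations. The enumeration of toric varieties (equivalently, mutation classes of Fano polygons) realising these baskets is precisely the content of~\cite{CK17}, which Theorem~\ref{thm:small_classification} records as yielding fourteen families in the first case and twelve in the second. So the combinatorial half of the correspondence is already in hand.

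First I would recall the setup: a Fano polygon $P$ with only $R$-singularities has no $T$-cones available to mutate through, so each mutation-equivalence class corresponds to a toric variety $X_P$ whose $\QQ$-Gorenstein deformation is \emph{locally rigid}. By the discussion preceding the proposition and the results of~\cite{A+}, each such class determines a unique $\QQ$-Gorenstein deformation family, and by Theorem~\ref{thm:cascade} together with the constructions of~\S\ref{sec:low_codim_models} every del~Pezzo surface with the relevant basket admits a toric degeneration to one of these. Thus the map from mutation classes to deformation families is well-defined and surjective onto the families in question; the substance of the proof is injectivity.

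The injectivity step is where the real work lies. For surfaces with a \emph{single} $\frac{1}{k}(1,1)$ singularity I would invoke Theorem~\ref{thm:cascade}, which already asserts a single cascade: the deformation families are distinguished by discrete invariants such as the anti-canonical degree $(-K_X)^2 = k - l + 4 + 4/k$ and the Fano index, so two mutation classes with different such invariants cannot deform to isomorphic surfaces. For the surfaces with more than one $R$-singularity (the second basket, with $m_3 > 0$), I would use the explicit weighted-projective models constructed in~\S\ref{sec:larger_baskets} and Remark~\ref{rem:pairs_of_singularities}, together with the anti-canonical degree and the basket itself as deformation invariants. Since the basket of singularities is preserved under $\QQ$-Gorenstein deformation of a locally rigid surface and the degree pins down the remaining freedom, distinct families are separated.

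The main obstacle I anticipate is establishing that the $\QQ$-Gorenstein deformations attached to the two isolated exceptional cases from~\cite{CK17} (the surfaces not carrying a single $\frac{1}{k}(1,1)$ singularity) are genuinely distinct from, and not accidentally isomorphic to, the deformations of the ``generic'' cascade surfaces with the same numerical invariants. Here degree and basket alone may not suffice, and I would fall back on finer invariants—such as the root system of $(-2)$-classes in $\omega^\bot \subset \Pic(X)$ identified in Theorem~\ref{thm:root_systems}, or the Picard rank of the minimal resolution—to separate the families. The bookkeeping of matching the fourteen and twelve families against the mutation classes is routine but tedious, so the proof would proceed case by case, citing the classification of~\cite{CK17} for the list of polygons and the numerical invariants above to certify that no two are $\QQ$-Gorenstein deformation equivalent.
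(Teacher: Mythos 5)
Your overall reduction is the same as the paper's -- unobstructedness of $\QQ$-Gorenstein deformations plus the classification of mutation classes in \cite{CK17}, so that everything comes down to separating the resulting families by invariants -- but you separate them by a genuinely different set of invariants. The paper uses \emph{singularity content}: by \cite{AK14} this records the topological Euler characteristic of the smooth locus of a general fibre, and for a fixed basket it carries the same information as your pair (basket, anti-canonical degree). This distinguishes every mutation class in \cite{CK17} except the pair degenerating to $X^{(k)}_k$ and $B^{(k)}_k$, which share the degree $4+4/k$, the basket, and even the Picard rank $k+2$ of the minimal resolution (so that particular fallback invariant from your list would not help here). The paper then separates this last pair by the fundamental group of the complement of a general anti-canonical divisor, computed from the polygon as the quotient of $M$ by the lattice of mutation weight vectors \cite{KNP15}: trivial for $X^{(k)}_k$, and $\ZZ_2$ for $B^{(k)}_k$. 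Your proposal would instead separate the pair by the Fano index, which does work: $X^{(k)}_k$ has index $1$ (a floating $(-1)$-curve $E$ lies in the smooth locus and $-K\cdot E = 1$), while $B^{(k)}_k$ has index $2$, since adjunction on the model $X_{k+1}\subset\PP(1,1,1,k)$ gives $-K = 2\,\cO(1)|_X$, so the index is at least $2$ -- consistent with the table in \S\ref{sec:cascadesofsurfaces}. Your route is arguably more elementary; what the paper's invariants buy is that both are computed directly from the Fano polygon, with no need to identify the general fibre of each family, whereas your argument leans on the explicit identifications of general members from Theorem~\ref{thm:cascade} and \S\ref{sec:low_codim_models}, and you should say explicitly that you are comparing \emph{general} fibres of the two families (which is legitimate precisely because the deformations are unobstructed).

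Two smaller corrections. First, the obstacle you anticipate concerning the exceptional surfaces of \cite{CK17} is a non-issue: those surfaces carry baskets with more than one singular point, so the basket alone separates them from the single-singularity cascades, and no appeal to Theorem~\ref{thm:root_systems} is needed -- indeed that theorem only applies to blow-ups of $\PP(1,1,k)$ in general points, so the root-system invariant is not even available for the exceptional surfaces. Second, your opening claim that a polygon with only $R$-singularities ``has no $T$-cones available to mutate through'' is garbled: the relevant polygons typically do carry $T$-cones (these are exactly what smooths under the $\QQ$-Gorenstein deformation, and what mutations act on); local rigidity of the general fibre comes from the residual basket consisting of $R$-singularities, not from an absence of $T$-cones.
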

\begin{proof}
Akhtar--Kasprzyk~\cite{AK14} observe that the topological Euler characteristic of the smooth locus of a general $\QQ$-Gorenstein deformation of a toric Fano surface $X_P$ can be read from the Fano polygon $P$ and this is precisely the notion of \emph{singularity content}. Singularity content distinguishes every mutation class of polygons classified in~\cite{CK17} except those describing toric degenerations of the surfaces $X^{(k)}_k$ and $B^{(k)}_k$. Thus it is sufficient to show that these two surfaces are not deformation equivalent. To do this use a finer topological invariant considered in~\cite{KNP15}: the fundamental group of the complement of a general anti-canonical divisor. This can be computed from the Fano polygon $P \subset N_\QQ$ of a $\QQ$-Gorenstein toric degeneration by taking the quotient $G$ of $M$ by a lattice generated by all possible weight vectors of mutations of $P$. It is easy to see that $G$ is trivial in the case $X^{(k)}_k$, but $G \cong \ZZ_2$ in the case $B^{(k)}_k$.
\end{proof}

\subsection{Mirror Symmetry via Quivers}
\label{sec:clusters}

Mirror Symmetry for Fano varieties conjectures a correspondence between a given Fano variety together with a choice of anti-canonical divisor $(X,D)$ and a certain \emph{Landau--Ginzburg model} $(U,W)$. For us, a Landau--Ginzburg model is a pair $(U,W)$, where $U$ is a K\"{a}hler manifold equipped with a holomorphic function $W$. Following the results and constructions appearing in~\cite{A+,GHK2,GHK1,KNP15} there is a well-understood mirror model for each of the surfaces $X^{(l)}_k$. In this section we recall this construction and tabulate the mirror-dual models for each of the surfaces $X^{(l)}_k$. We omit proofs of the statements in this section, referring the reader to the papers~\cite{A+,GHK2,GHK1,KNP15} which deal with various aspects of this construction.

Fix a pair $(k,l)$ so that $X := X^{(l)}_k$ is a del~Pezzo surface and an element $D \in |-K_X|$. Assume throughout this section that $k=3$ or $k > 4$ to reduce the number of cases that need to be considered. The construction of $U$ follows that given in~\cite{GHK2,GHK1} for general log Calabi--Yau surfaces with maximal boundary. The algorithm to construct $U$ is most easily seen via a toric degeneration $X_0$ of $X$.

\begin{algorithm}
\label{alg:mirror_log_CY}
Fix the degeneration of $X$ to the toric variety $X_P$ where $P = P^l_k$ is specified in~\S\ref{sec:low_codim_models}. We construct the mirror-dual log Calabi--Yau $U$ in three stages:
\begin{enumerate}
\item Let $Y_0$ be the toric variety associated to the normal fan $\Sigma_P$ of $P$.
\item For each ray $\rho \in \Sigma_P(1)$ choose $a_\rho$ points $\{p_{i,\rho} : i \in [a_\rho]\}$ on the corresponding divisor of $Y_0$, where $0 \leq a_\rho \leq m_\rho$, and $m_\rho$ is the \emph{singularity content} of the torus fixed point of $X_P$ determined by $\rho$. 
\item Blow-up all the points in $\bigcup_{\rho \in \Sigma_P(1)} \{p_{i,\rho} : i \in [a_\rho]\}$ and define $U$ to be the complement of the strict transform of the toric boundary of $Y_0$.
\end{enumerate}
\end{algorithm}

There is a choice made in Algorithm~\ref{alg:mirror_log_CY} in the number of points $p_{i,\rho}$ on various divisors. This corresponds precisely to the choice of the number of irreducible components of the anti-canonical divisor $D$. 

Gross--Hacking--Keel~\cite{GHK2} describe how to attach a quiver (and hence a cluster algebra) to the log Calabi--Yau $U$ together with a \emph{toric model}. An equivalent quiver $\cQ_P$ constructed from the Fano polygon $P$ (via Algorithm~\ref{alg:mirror_log_CY}) is described in~\cite{KNP15}. In~\cite{GHK2} it is observed that, up to the taking the complement of a codimension two subvariety, Mirror Symmetry in this context is precisely the duality between the $\cX$ and $\cA$ type cluster varieties appearing in the work of Fock--Goncharov~\cite{FG09}. 

We now recall the construction appearing in~\cite{KNP15} of $\cQ_P$ from the Fano polygon $P$ and tabulate a choice of quiver for each of the del~Pezzo surfaces with a single $\frac{1}{k}(1,1)$ singularity. Let $P$ be a Fano polygon with singularity content given by the pair $(n,\mathcal{B})$. The quiver $\cQ_P$ has $n$ vertices, and each vertex $v_{i}$ of $\cQ_P$ corresponds to a $T$-singularity of $P$ which lies on an edge $E$. Let $\omega_{i}$ be the inward pointing normal to $E$. The number of arrows in $\cQ_P$ from $v_{i}$ to $v_{j}$ is given by
\[
\max \left\{ \omega_i \wedge \omega_j, 0 \right\},
\]
where we have fixed an orientation of the lattice $M$ containing the normal directions to the edges of $P$.

In fact it is often useful to use a smaller quiver $\cQ'_P$, the subquiver of $\cQ_P$ obtained by forgetting a single node of $\cQ_P$ corresponding to each Gorenstein singularity (In particular remove all nodes corresponding to smooth cones). For example, if $X_P \cong \PP^2$, $\cQ_P$ is a cycle with three arrows between each node, whereas $\cQ'_P$ is empty. We tabulate those quivers $\cQ'_P$ obtained from the surfaces $X^{(l)}_k$. Note that (unlike $\cQ_P$) the number of nodes of $\cQ'_P$ depends on $P$ and not only its mutation equivalence class. Also note that each of the polygons $P$ used to populate the table is related to $P^{(l)}_k$ by polygon mutation (but are not equal in general).

\begin{center}
{\renewcommand{\arraystretch}{2}
\begin{tabular}{ c|c|c }
$l$ & $\cQ'_P$ & $\#$ components of $D$ \\  \hline
$0$ & $\varnothing$ & $3$  \\
$1$ & $\varnothing$ & $4$  \\
$2$ & $\varnothing$ & $5$  \\
$2 \leq l \leq k+1$ & $A^{l-2}_1$ & $5$ \\
$k+2$ & $\xymatrix@=0.5pc{ \bullet & \cdots k \cdots & \bullet\\ & \bullet \ar[ul] \ar[u] \ar[ur] & }$ & 4 \\
$k+3$ & $\xymatrix@=0.5pc{ \bullet & \cdots (k+2) \cdots & \bullet\\ & \bullet \ar[ul] \ar[u] \ar[ur] & }$ & 3\\
$k+4$ &  $\xymatrix@=0.5pc{ \bullet \ar[drr] & \cdots (k+1) \cdots \ar[dr] & \bullet \ar[d]\\ \bullet \bullet \ar[u] \ar[ur] \ar[urr] &  & \bullet \bullet \ar[ll] }$ & 2
\end{tabular}}
\end{center}
\medskip
The log Calabi--Yau variety  $U$ mirror to $(X^{(l)}_k,D)$, with $D$ as indicated in the above table, is the $\cA$-type cluster variety associated to $\cQ'_P$. The choice of $D$ determines a holomorphic function $W$ on $U$, that is, an element of the upper cluster algebra associated to $\cQ'_P$. Fix such a function by observing that, by construction, each torus chart in $U$ is associated with a Fano polygon $P$ and requiring that the Newton polyhedron of $W$ restricted to this chart is equal to this polygon. This definition precisely coincides with the notion of \emph{maximally mutable Laurent polynomial}~\cite{A+,Kasprzyk--Tveiten}. In fact the choice of $\cQ_P$ or $\cQ'_P$ does not matter: the possible functions $W$ are the same.

As explained in~\cite{A+,Kasprzyk--Tveiten}, this function is not unique. In a way made precise in~\cite{A+}, $W$ depends on a number of parameters determined by the residual singularities of $X_P$ (in the present case the single $\frac{1}{k}(1,1)$ singularity). The parameters which appear are related to the orbifold Quantum cohomology of $X^{(l)}_k$ and were studied by Oneto--Petracci~\cite{OP15} when $k=3$.

Note that, as well as its intrinsic interest, a cluster algebra description of the surfaces $X^{(l)}_k$ provides deep geometric insights. Indeed, in~\cite{GHKK} Gross--Hacking--Keel--Kontsevich study canonical bases of functions for such varieties via theta functions, which appeared in~\cite{GHK1}. In~\cite{GHK3} Gross--Hacking--Keel prove a Torelli type theorem for log Calabi--Yau varieties, meaning the families of surfaces considered should be accessible via a certain period map.

\subsection*{Acknowledgements}

We thank Alexander Kasprzyk, Alessio Corti and Andrea Petracci for many useful conversations. DC particularly thanks his doctoral advisor, Alexander Kasprzyk, for his guidance. DC would also like to thank Liana Heuberger for her help in his understanding of~\cite{CH16}, as well as Miles Reid and Gavin Brown for useful discussions. TP was supported by an EPSRC Doctoral Prize Fellowship and Tom Coates' ERC Grant 682603. This work was partially supported by Kasprzyk's EPSRC Fellowship EP/N022513/1.



\end{document}